\documentclass[a4paper,11pt]{article}

\usepackage[utf8]{inputenc}
\usepackage[T1]{fontenc}
\usepackage{lmodern}
\usepackage{amsmath,amssymb,amsfonts}
\usepackage{amsthm}
\usepackage{geometry}
\usepackage{hyperref}
\usepackage[numbers]{natbib}
\usepackage{float}
\usepackage{graphicx}
\usepackage{caption}

\captionsetup{labelfont=bf,labelsep=space}
\geometry{margin=1in}

\numberwithin{equation}{section}

\newtheorem{theorem}{Theorem}[section]

\newtheorem{corollary}[theorem]{Corollary}
\newtheorem{lemma}[theorem]{Lemma}

\theoremstyle{definition}
\newtheorem{definition}[theorem]{Definition}

\theoremstyle{remark}
\newtheorem{remark}[theorem]{Remark}

\newcommand{\R}{\mathbb{R}}
\newcommand{\E}{\mathbb{E}}

\title{Lamperti scaling for fractional Gaussian processes with non-stationary increments}

\author{
Foad Shokrollahi\thanks{Department of Mathematics and Statistics, University of Vaasa, P.O. Box 700, FIN-65101 Vaasa, FINLAND. Email: foad.shokrollahi@uwasa.fi}
\and
Saeed Vahdati\thanks{Department of Mathematics, Khansar Campus, University of Isfahan, Isfahan, Iran. Email: s.vahdati@khc.ui.ac.ir , sdvahdati@gmail.com}
}

\begin{document}

\maketitle

\begin{abstract}
The Lamperti transform offers a powerful bridge between self-similar processes and stationary dynamics, making it especially useful for analyzing anomalous diffusion models that lack stationary increments. In this paper we examine the Lamperti transforms of scaled sub-fractional and bi-fractional Brownian motions, deriving explicit covariance formulas, asymptotic behaviour, and precise exponential mixing rates. We also introduce Langevin type integral processes driven by these Gaussian fields, identify their self-similarity exponents, and show that their Lamperti images again form stationary Gaussian processes with rapid decorrelation. Through inverse Lamperti relations and Birkhoff’s theorem, we establish rigorous single trajectory reconstruction of ensemble quantities for the original non-stationary processes. The results extend the scope of the scaled Lamperti framework to Gaussian processes with non-stationary increments and richer dependence structures.
\end{abstract}

\vspace{0.5cm}

\noindent\textbf{Keywords:} Lamperti transform; Gaussian processes; sub-fractional and bi-fractional Brownian motion; ergodicity; mixing rates.

\noindent\textbf{MSC (2020):} 91G20; 91G80; 60G22.

\section{Introduction}

Self-similarity and anomalous diffusion have become central themes in modern probability theory, mathematical physics, and the modeling of complex systems. Many stochastic processes used to describe irregular or memory driven phenomena exhibit strong departures from the classical framework of stationary increments. Important examples include fractional Brownian motion, sub-fractional Brownian motion $(s-fBm)$, bi-fractional Brownian motion $(bi-fBm)$, scaled Brownian motion, continuous time random walks, L\'evy flights, and a variety of non Markovian Gaussian models \cite{MetzlerKlafter2000,Bojdecki2004,RussoTudor2006,HoudreVilla2003,CherstvyMetzler2014}. Such systems often display ageing, long memory, non-stationary increments, and trajectory to trajectory fluctuations that violate classical ergodicity. The discrepancy between time averages and ensemble averages, commonly referred to as ergodicity breaking, is widely observed in physical and biological experiments where individual particle trajectories may differ significantly from their expected behavior \cite{Barkai2018,MetzlerKlafter2000}.

A central theoretical tool that links non-stationary self-similar processes to stationary ones is the Lamperti transformation introduced by \cite{Lamperti1962}. For an $H$ self-similar process $X(t)$ the transform
\[
X^{\mathrm{LT}}(t) = e^{-Ht} X(e^{t}), \qquad t \in \mathbb{R},
\]
produces a strictly stationary process, and the inverse relation
\[
X(t) = t^{H} X^{\mathrm{LT}}(\log t)
\]
recovers the original dynamics. Lamperti proved that this mapping gives a one to one correspondence between self-similar processes and stationary processes. Consequently, stationary representations allow the use of ergodic theory, spectral analysis, mixing concepts, and related probabilistic techniques to analyse non stationary processes that would otherwise fall outside the scope of classical ergodic theorems. This perspective is emphasised in the theory of Gaussian processes and Brownian functionals developed in \cite{MansuyYor2008} and is closely related to classical results on Gaussian random processes and ergodicity \cite{Maruyama1970,IbragimovRozanov1978}.

The connection between Lamperti type transforms and anomalous diffusion has been explored intensively in the physics literature. The fractional dynamics viewpoint of \cite{MetzlerKlafter2000} and subsequent studies of non-stationary scaled Brownian motion \cite{CherstvyMetzler2014} and related non-Gaussian diffusion \cite{Cherstvy2014,Barkai2018} show that many anomalous transport models possess self-similarity but lack stationary increments. In this context time averaged mean-square displacements may differ systematically from ensemble averages, leading to weak or strong forms of ergodicity breaking. A particularly influential result due to \cite{Magdziarz2020} demonstrates that a wide class of anomalous diffusion models built from scaled Brownian motion and related Langevin equations can be transformed into stationary and ergodic processes via a Lamperti transformation adapted to the similarity index. Their analysis shows that models with time dependent diffusivity and pronounced ageing effects become stationary and exhibit exponential decay of correlations after transformation. In a complementary direction, \cite{MagdziarzZorawik2018} studied ergodic properties of L\'evy flights coexisting with subdiffusion and related models, again revealing that suitable time changes and Lamperti type representations can restore ergodic behavior in systems that are strongly non-stationary in their original formulation.

Lamperti type constructions also appear in branching theory and L\'evy driven dynamics beyond the Gaussian setting. In the work on continuous state branching processes with competition, the Lamperti representation provides a one to one correspondence between branching dynamics with nonlinear interactions and time changed L\'evy processes, showing that complex branching structures can be encoded as stationary noise observed along a random clock \cite{Lamperti1967}. These developments illustrate that the Lamperti principle is structural rather than model specific. It provides a unifying representation for a wide range of dynamics including Gaussian processes, L\'evy flights, branching systems, and fractional diffusions. In parallel, recent work by \cite{EnschMansfield2023} revisited Lamperti transforms in the context of anomalous diffusion and ergodicity breaking, clarifying how the choice of time change and similarity exponent determines whether the Lamperti image exhibits weak or strong forms of ergodicity.

Despite these advances, the classical Lamperti transformation is not fully understood for Gaussian processes whose increments are non-stationary. In particular, $s-fBm$ introduced in \cite{Bojdecki2004} and $bi-fBm$ introduced and analysed in \cite{RussoTudor2006,HoudreVilla2003} pose significant analytical challenges. Both processes are centred Gaussian and self-similar, but their covariance structures differ markedly from that of fractional Brownian motion. $s-fBm$ preserves the $H$-self-similarity of fractional Brownian motion but has a covariance that combines weaker long range dependence with distinctive short-range behaviour and lacks stationary increments. $bi-fBm$ introduces an additional parameter $K$ which modifies the dependence of the covariance on the geometry of time, leading to a two parameter family of self-similar Gaussian processes with non-stationary increments that cannot be reduced to fractional Brownian motion except in the special case $K = 1$. These features prevent direct application of standard ergodic arguments and leave the behaviour of their Lamperti transforms largely unexplored.

A key recent development is the extension of the Lamperti mapping to include a scaling parameter $\alpha > 0$. For processes of the form $X(t) = Y(t^{\alpha})$ with similarity index $\alpha H$, the scaled Lamperti transform
\[
\mathcal{L}_{\alpha}[X](t) = e^{-\alpha H t} X(e^{\alpha t})
\]
was proposed as a natural generalisation of the classical transform, and it has been shown to have a dramatic impact on the correlation structure of the stationary image \cite{Magdziarz2020,EnschMansfield2023}. In particular, scaled Brownian motion and scaled fractional Brownian motion, which possess non-stationary increments and typically exhibit weak ergodicity breaking, are mapped into stationary Gaussian processes with exponential covariance decay. This exponential decorrelation stands in contrast to the slow power law decay induced by the classical Lamperti transform and leads to strong mixing and full ergodicity in the transformed domain.

A central feature of the scaled Lamperti framework is that the scaling exponent
$\alpha$ actively shapes the long-time behaviour of the stationary image.
In the classical Lamperti transform ($\alpha = 1$), introduced in
\cite{Lamperti1962}, many Gaussian self-similar processes retain only
power law correlation decay, and ergodicity may fail; this behaviour is
well documented for fractional Brownian motion and related models with
non-stationary increments (see, e.g., \cite{Maruyama1970, IbragimovRozanov1978,
Bojdecki2004}). By contrast, choosing $\alpha \neq 1$ has a regularising effect: the exponential
time change $t \mapsto e^{\alpha t}$ accelerates the covariance growth and
introduces an exponential damping factor in the stationary representation.
This mechanism underlies the emergence of exponential correlation decay
and strong mixing for the scaled Lamperti transforms of $s-fBm$ and
$bi-fBm$.

In the bi-fractional setting, the additional parameter $K$ enriches the
covariance structure. The competition between the exponents $2H-HK$ and
$1-HK$ in the asymptotic expansion of the autocovariance produces a
non-trivial dependence of the mixing rate on both $H$ and $K$. This
dual parameter behaviour reflects the intrinsic structure of $bi-fBm$ as established in \cite{RussoTudor2006, HoudreVilla2003}:
the parameter $K$ acts as a genuine modifier of long range dependence and
determines the exponential rate $c(H,K,\alpha)$ governing decorrelation in the
Lamperti domain.

The purpose of the present work is to extend this scaled Lamperti framework to $s-fBm$ and $bi-fBm$, and to associated Langevin type integral models driven by these processes. These models display forms of non-stationarity that are more intricate than for fractional Brownian motion, making them natural test cases for the scope of Lamperti based methods. For $s-fBm$ we show that the scaled Lamperti transform produces an explicit stationary covariance with exponential decay, which in turn implies ergodicity and strong mixing with exponential rate. For $bi-fBm$ we demonstrate that the scaled transform again yields a stationary Gaussian process and we derive a closed form expression for the covariance together with a rigorous analysis of the exponential decay uniform in the parameter range. The mixing rate depends on the Hurst parameter $H$, the deformation parameter $K$, and the scaling exponent $\alpha$, and we obtain an explicit formula describing this dependence. At a methodological level, the analysis uses covariance calculus for Gaussian processes, Gaussian Hilbert space techniques, and Malliavin-type tools as in \cite{Janson1997,Nualart2006}, combined with classical results on Gaussian random processes and spectral characterisations of ergodicity \cite{Maruyama1970,IbragimovRozanov1978}.

An important implication of our results is that the inverse Lamperti mapping allows ensemble characteristics of the original non-stationary processes to be recovered from single trajectory observations of their Lamperti images. Since the stationary images are ergodic and strongly mixing, time averages of a wide class of functionals converge almost surely to their ensemble expectations. Through the inverse mapping, this leads to reconstruction formulas for moments, probability laws, and characteristic functions of $s-fBm$ and $bi-fBm$, as well as for related Langevin type equations. This extends and systematises the paradigm introduced in the study of anomalous diffusion \cite{MetzlerKlafter2000,CherstvyMetzler2014,Magdziarz2020,EnschMansfield2023} and reinforces the conceptual message that Lamperti type transformations provide an essential bridge between self-similarity and stationarity for a broad class of Gaussian processes with non-stationary increments.

The paper is organised as follows. Section~\ref{sec:sub_bi} treats scaled $s-fBm$ and scaled $bi-fBm$: stationarity, exact covariance, asymptotics, and ergodic properties. Section~\ref{sec:sub_bi_langevin} introduces Langevin equation constructions driven by $s-fBm$ and $bi-fBm$ and analyses their Lamperti transforms. Section~\ref{sec:numerics} presents numerical simulations confirming the theoretical predictions. Conclusion is given in Section~\ref{sec:conclusion}.

\section{Lamperti Transformations for $s-fBm$ and $bi-fBm$}
\label{sec:sub_bi}

In many applications one is interested in stochastic dynamics that exhibit exact power-law scaling in time but are not themselves stationary. A real valued process $X=\{X(t)\}_{t>0}$ is said to be $\beta$-self-similar if, for every $c>0$,
\[
\{X(ct)\}_{t>0}\;\stackrel{d}{=}\;\{c^{\beta}X(t)\}_{t>0}.
\]
This property captures the absence of a characteristic time scale and is a natural starting point for modeling anomalous diffusion and related phenomena. In the present work we frequently compose such processes with power-law time changes $t\mapsto t^{\alpha}$, $\alpha>0$, which leads to \emph{scaled} self-similar families of the form $t\mapsto X(t^\alpha)$ with self-similarity index $\alpha\beta$.

Lamperti's observation is that self-similar processes on $(0,\infty)$ can be converted into stationary processes on $\R$ via an exponential time change and a deterministic re scaling. In particular, the exponential map $t\mapsto e^{\alpha t}$ converts multiplicative time scalings into additive shifts, while the prefactor $e^{-\alpha H t}$ compensates for the growth induced by self-similarity. For our purposes it is convenient to work with the following scaled version.

\begin{definition}
\label{def:lamperti_scaled}
Let $\alpha>0$, $H\in(0,1)$, and let $X=\{X(t)\}_{t>0}$ be an $\alpha H$-self-similar process, i.e.
\[
\{X(ct)\}_{t>0}\;\stackrel{d}{=}\;\{c^{\alpha H}X(t)\}_{t>0},\qquad c>0.
\]
The \emph{scaled Lamperti transform} of $X$ is the process
\begin{equation}
\label{eq:lamperti_general_scaled}
X^{\mathrm{LT}}(t):=e^{-\alpha H t} X(e^{\alpha t}),\qquad t\in\R.
\end{equation}
The inverse map is given by
\begin{equation}
\label{eq:inverse_general_scaled}
X(t)=t^{\alpha H} X^{\mathrm{LT}}\!\left(\frac{\log t}{\alpha}\right),\qquad t>0.
\end{equation}
Whenever $X$ is $\alpha H$-self-similar, its Lamperti transform $X^{\mathrm{LT}}$ is stationary.
\end{definition}

\begin{remark}
The exponent $\alpha H$ in \eqref{eq:lamperti_general_scaled} reflects the self-similarity index of the scaled process, while the argument $e^{\alpha t}$ aligns the exponential time change with the underlying power law scaling. The inverse relation \eqref{eq:inverse_general_scaled} ensures exact correspondence, in the sense that $X(e^{\alpha t}) = e^{\alpha H t}X^{\mathrm{LT}}(t)$. For $\alpha=1$ we recover the classical Lamperti transform $X^{\mathrm{LT}}(t)=e^{-Ht}X(e^t)$. In what follows, this representation serves as the bridge between non-stationary self-similar processes and stationary Gaussian fields amenable to spectral and ergodic analysis.
\end{remark}

\subsection{Lamperti transformation for $s-fBm$}
\label{sec:sub}

We first treat the case of $s-fBm$, a non-stationary Gaussian process with non stationary increments but exact self-similarity. This process provides a natural benchmark for understanding how the Lamperti transform acts on Gaussian processes with long range temporal structure that differ subtly from the fractional Brownian case.

\begin{definition}
\label{def:subfbm}
Let $H\in(0,1)$. A $s-fBm$ $S_H=\{S_H(t)\}_{t\ge 0}$ is a centred Gaussian process with covariance
\begin{equation}
\label{eq:cov_subfbm}
\E[S_H(s)S_H(t)]
= s^{2H}+t^{2H}-\frac{1}{2}\bigl((s+t)^{2H}+|t-s|^{2H}\bigr),\qquad s,t\ge 0.
\end{equation}
The process is $H$-self-similar, i.e.
\[
\{S_H(at)\}_{t\ge 0}\;\stackrel{d}{=}\;\{a^H S_H(t)\}_{t\ge 0},\qquad a>0,
\]
but its increments are not stationary; see, for example, \cite{Bojdecki2004}.
\end{definition}

For a fixed scaling exponent $\alpha>0$ it is natural to consider the time changed process
\begin{equation}
\label{def:scaled_subfbm}
Y(t):=S_H(t^\alpha),\qquad t\ge 0,
\end{equation}
which is an $\alpha H$-self-similar Gaussian process:
\[
\{Y(ct)\}_{t\ge 0}\;\stackrel{d}{=}\;\{c^{\alpha H} Y(t)\}_{t\ge 0},\qquad c>0.
\]
From the perspective of applications, the parameter $\alpha$ allows one to decouple the scaling of physical time from the Hurst index $H$, providing additional flexibility in fitting empirical scaling exponents. Applying the scaled Lamperti map of Definition~\ref{def:lamperti_scaled} to $Y$ leads to a stationary Gaussian representation associated with $S_H$.

Let $S_H$ be a $s-fBm$ with Hurst index $H\in(0,1)$, and $\alpha>0$ fixed. We consider the Lamperti transform of the scaled process $S_H(t^\alpha)$.

\begin{definition}
For $t\in\R$ define
\begin{equation}
\label{eq:SLT_def}
S_H^{\mathrm{LT}}(t):=e^{-\alpha H t} S_H(e^{\alpha t}).
\end{equation}
\end{definition}

\begin{theorem}
\label{thm:stationary_scaled_subfbm}
The process $S_H^{\mathrm{LT}}=\{S_H^{\mathrm{LT}}(t)\}_{t\in\R}$ is a centred stationary Gaussian process.
\end{theorem}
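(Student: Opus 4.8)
The plan is to verify the two defining properties of a centred stationary Gaussian process directly from the covariance \eqref{eq:cov_subfbm}. Since $S_H$ is a centred Gaussian process and $S_H^{\mathrm{LT}}(t)=e^{-\alpha H t}S_H(e^{\alpha t})$ is a deterministic (nonrandom) multiple of a Gaussian random variable for each fixed $t$, the finite-dimensional distributions of $S_H^{\mathrm{LT}}$ are Gaussian, and centredness is immediate from $\E[S_H(\cdot)]=0$. Thus the entire statement reduces to showing that the autocovariance function $C(s,t):=\E[S_H^{\mathrm{LT}}(s)S_H^{\mathrm{LT}}(t)]$ depends on $s$ and $t$ only through the difference $t-s$.

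The key computation is to substitute the explicit covariance into the definition. First I would write
\begin{equation}
C(s,t)=e^{-\alpha H(s+t)}\,\E\!\left[S_H(e^{\alpha s})S_H(e^{\alpha t})\right]
=e^{-\alpha H(s+t)}\Bigl[e^{2\alpha H s}+e^{2\alpha H t}-\tfrac12\bigl((e^{\alpha s}+e^{\alpha t})^{2H}+|e^{\alpha t}-e^{\alpha s}|^{2H}\bigr)\Bigr].
\end{equation}
The next step is to distribute the prefactor $e^{-\alpha H(s+t)}$ across each term and exploit the homogeneity of the power functions. The pure power terms give $e^{-\alpha H(s+t)}e^{2\alpha H s}=e^{\alpha H(s-t)}$ and symmetrically $e^{\alpha H(t-s)}$, which already depend only on $t-s$. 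For the remaining two terms I would factor out a common exponential from inside each bracket, using $(e^{\alpha s}+e^{\alpha t})^{2H}=e^{\alpha H(s+t)}\bigl(e^{\alpha(s-t)/2}+e^{\alpha(t-s)/2}\bigr)^{2H}$ and $|e^{\alpha t}-e^{\alpha s}|^{2H}=e^{\alpha H(s+t)}\bigl|e^{\alpha(t-s)/2}-e^{\alpha(s-t)/2}\bigr|^{2H}$; multiplying by $e^{-\alpha H(s+t)}$ then cancels the factor $e^{\alpha H(s+t)}$ exactly. After this cancellation every surviving term is a function of $\tau:=t-s$ alone, which proves stationarity of the covariance and hence, together with the Gaussian finite-dimensional distributions, weak stationarity.

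The final point is that for Gaussian processes weak (second-order) stationarity upgrades automatically to strict stationarity, since all finite-dimensional laws of a centred Gaussian process are determined by the covariance; translation invariance of the covariance therefore forces translation invariance of the joint laws. I would record the resulting covariance as an explicit function of $\tau$, writing
\begin{equation}
\label{eq:SLT_cov_tau}
r_H(\tau)=e^{\alpha H\tau}+e^{-\alpha H\tau}-\tfrac12\Bigl(\bigl(e^{\alpha\tau/2}+e^{-\alpha\tau/2}\bigr)^{2H}+\bigl|e^{\alpha\tau/2}-e^{-\alpha\tau/2}\bigr|^{2H}\Bigr),
\end{equation}
which can equivalently be expressed through $\cosh$ and $\sinh$; this closed form will also serve the later asymptotic and mixing analysis. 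I do not anticipate a genuine obstacle here: the only mild care needed is the bookkeeping of exponents in the homogeneity step and confirming that the absolute value $|e^{\alpha t}-e^{\alpha s}|$ factors cleanly, which it does because $e^{\alpha H(s+t)}>0$ can be pulled out of the modulus without sign issues.
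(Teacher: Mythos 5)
Your proof is correct, but it takes a genuinely different route from the paper. The paper never computes the two-point covariance: it invokes the $H$-self-similarity of $S_H$ directly, writing $S_H(e^{\alpha u}e^{\alpha t})\stackrel{d}{=}e^{\alpha H u}S_H(e^{\alpha t})$ so that the compensating prefactor $e^{-\alpha H u}$ cancels exactly, and concludes shift-invariance of the finite-dimensional distributions — this is the classical Lamperti argument, and note that it does not use Gaussianity at all for the stationarity part (it would work for any $\alpha H$-self-similar process). You instead verify second-order stationarity by brute-force homogeneity of the sub-fractional covariance, factoring $e^{\alpha H(s+t)}$ out of $(e^{\alpha s}+e^{\alpha t})^{2H}$ and $|e^{\alpha t}-e^{\alpha s}|^{2H}$, and then upgrade to strict stationarity via the standard fact that centred Gaussian laws are determined by their covariance; this step is where Gaussianity is genuinely needed, so your argument is less general but perfectly valid here. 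What your route buys is the explicit stationary covariance
\begin{equation*}
r_H(\tau)=e^{\alpha H\tau}+e^{-\alpha H\tau}-\tfrac12\Bigl(\bigl(e^{\alpha\tau/2}+e^{-\alpha\tau/2}\bigr)^{2H}+\bigl|e^{\alpha\tau/2}-e^{-\alpha\tau/2}\bigr|^{2H}\Bigr)
\end{equation*}
as a byproduct, which is in fact a two-time generalisation of the paper's Theorem~\ref{thm:cov_subfbm_scaled} (setting $s=0$ and multiplying back through recovers \eqref{eq:cov_scaled_subfbm}), so you effectively prove two of the paper's results in one computation; the cost is that the structural reason for stationarity — the exponential change of variables turning scaling into shifts — is hidden inside the algebra rather than displayed.
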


\begin{proof}
Gaussianity and centredness are immediate since $S_H$ is centred Gaussian and $S_H^{\mathrm{LT}}$ is a linear transform. For stationarity, fix $u\in\R$ and observe
\[
S_H^{\mathrm{LT}}(t+u)
= e^{-\alpha H (t+u)} S_H(e^{\alpha(t+u)})
= e^{-\alpha H t} e^{-\alpha H u} S_H(e^{\alpha u} e^{\alpha t}).
\]
By $H$-self-similarity of $S_H$,
\[
S_H(e^{\alpha u} e^{\alpha t})\;\stackrel{d}{=}\;(e^{\alpha u})^{H} S_H(e^{\alpha t})
= e^{\alpha H u} S_H(e^{\alpha t}).
\]
Thus
\[
S_H^{\mathrm{LT}}(t+u)
\;\stackrel{d}{=}\;
e^{-\alpha H t} e^{-\alpha H u} e^{\alpha H u} S_H(e^{\alpha t})
= e^{-\alpha H t} S_H(e^{\alpha t})
= S_H^{\mathrm{LT}}(t),
\]
which shows that all finite dimensional distributions are shift invariant. Stationarity follows.
\end{proof}

The preceding result shows that the Lamperti transform transports the self-similarity of $S_H$ into stationarity of $S_H^{\mathrm{LT}}$. To exploit this representation quantitatively, and in particular to study ergodic and mixing properties, we now compute the autocovariance function of $S_H^{\mathrm{LT}}$ in closed form.

\begin{theorem}
\label{thm:cov_subfbm_scaled}
Let $S_H^{\mathrm{LT}}$ be a Lamperti transformation of $s-fBm$ \eqref{eq:SLT_def}. Then, the autocovariance function of $S_H^{\mathrm{LT}}$ is
\begin{equation}
\label{eq:cov_scaled_subfbm}
R_{S,\alpha}(t)
:=\E\bigl[S_H^{\mathrm{LT}}(t) S_H^{\mathrm{LT}}(0)\bigr]
= e^{-\alpha H t}\Bigl(e^{2\alpha H t}+1-\frac12\bigl[(e^{\alpha t}+1)^{2H}+(e^{\alpha t}-1)^{2H}\bigr]\Bigr),
\qquad t\in\R.
\end{equation}
\end{theorem}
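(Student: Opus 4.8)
The plan is to compute the autocovariance directly from the definition
\eqref{eq:SLT_def}, exploiting the explicit covariance \eqref{eq:cov_subfbm}
of $s-fBm$. Since $S_H^{\mathrm{LT}}$ is already known to be centred and
stationary by Theorem~\ref{thm:stationary_scaled_subfbm}, it suffices to
evaluate $\E[S_H^{\mathrm{LT}}(t)\,S_H^{\mathrm{LT}}(0)]$, and stationarity
guarantees this single computation captures the full autocovariance structure.
First I would write out
\[
\E\bigl[S_H^{\mathrm{LT}}(t)\,S_H^{\mathrm{LT}}(0)\bigr]
= e^{-\alpha H t}\,e^{0}\,\E\bigl[S_H(e^{\alpha t})\,S_H(1)\bigr],
\]
using $S_H^{\mathrm{LT}}(0)=e^{0}S_H(e^{0})=S_H(1)$, so that the prefactor
reduces to the single exponential $e^{-\alpha H t}$.

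Next I would substitute $s=e^{\alpha t}$ and $t=1$ into the $s-fBm$ covariance
\eqref{eq:cov_subfbm}. This gives
\[
\E\bigl[S_H(e^{\alpha t})\,S_H(1)\bigr]
= e^{2\alpha H t}+1
-\tfrac12\bigl[(e^{\alpha t}+1)^{2H}+|e^{\alpha t}-1|^{2H}\bigr].
\]
Here I would carefully track the three terms of \eqref{eq:cov_subfbm}:
the first produces $(e^{\alpha t})^{2H}=e^{2\alpha H t}$, the second gives
$1^{2H}=1$, and the bracketed term yields the sum-and-difference contribution
$(e^{\alpha t}+1)^{2H}+(e^{\alpha t}-1)^{2H}$. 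Multiplying through by the
prefactor $e^{-\alpha H t}$ then yields exactly \eqref{eq:cov_scaled_subfbm}.

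The only subtlety requiring attention is the absolute value $|e^{\alpha t}-1|$
appearing in \eqref{eq:cov_subfbm}. For $t>0$ we have $e^{\alpha t}>1$, so the
absolute value is harmless and $|e^{\alpha t}-1|=e^{\alpha t}-1$. However, the
statement \eqref{eq:cov_scaled_subfbm} is claimed for all $t\in\R$, and for
$t<0$ one has $e^{\alpha t}<1$, giving $|e^{\alpha t}-1|=1-e^{\alpha t}$. I would
resolve this by invoking the evenness of the autocovariance of a stationary
process, namely $R_{S,\alpha}(-t)=R_{S,\alpha}(t)$, so that it suffices to
establish the formula for $t\ge 0$ and extend by symmetry; alternatively one
observes that $(e^{\alpha t}-1)^{2H}$ with the even exponent $2H$ combined with
$|e^{\alpha t}-1|^{2H}=|1-e^{\alpha t}|^{2H}$ makes the expression manifestly
even under $t\mapsto -t$ after accounting for the prefactor. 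This sign
bookkeeping is the main point to verify carefully; the remainder is a direct
algebraic substitution. Thus the proof reduces to plugging the self-similarity
time change into the given covariance and simplifying.
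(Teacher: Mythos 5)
Your proposal is correct and follows essentially the same route as the paper's proof: write $R_{S,\alpha}(t)=e^{-\alpha Ht}\,\E[S_H(e^{\alpha t})S_H(1)]$, substitute $(s,t)=(e^{\alpha t},1)$ into \eqref{eq:cov_subfbm}, and simplify. In fact you treat the sign issue more carefully than the paper does — its claim that $e^{\alpha t}>0$ implies $|1-e^{\alpha t}|=e^{\alpha t}-1$ only holds for $t\ge 0$, whereas your extension to $t<0$ via evenness of the stationary autocovariance (with the absolute value $|e^{\alpha t}-1|^{2H}$ retained, since $2H$ is not an integer) closes that gap cleanly.
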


\begin{proof}
From \eqref{eq:SLT_def},
\[
R_{S,\alpha}(t)
= \E\bigl[e^{-\alpha H t} S_H(e^{\alpha t})\cdot S_H(1)\bigr]
= e^{-\alpha H t} \E\bigl[S_H(e^{\alpha t}) S_H(1)\bigr].
\]
Using \eqref{eq:cov_subfbm} with $(s,t)=(e^{\alpha t},1)$,
\begin{align*}
\E[S_H(e^{\alpha t})S_H(1)]
&= (e^{\alpha t})^{2H}+1^{2H}-\frac12\bigl((e^{\alpha t}+1)^{2H}+|e^{\alpha t}-1|^{2H}\bigr) \\
&= e^{2\alpha H t}+1-\frac12\bigl((e^{\alpha t}+1)^{2H}+(e^{\alpha t}-1)^{2H}\bigr),
\end{align*}
since $e^{\alpha t}>0$ implies $|1-e^{\alpha t}|=e^{\alpha t}-1$. Substitution gives \eqref{eq:cov_scaled_subfbm}.

At $t=0$, we obtain
\[
R_{S,\alpha}(0)
= 1+1-\frac12\bigl((1+1)^{2H}+(1-1)^{2H}\bigr)
= 2-2^{2H-1},
\]
which equals $\mathrm{Var}(S_H(1))$, as expected from $S_H^{\mathrm{LT}}(0)=S_H(1)$. The expression is manifestly even in $t$ (either by stationarity or by direct inspection).
\end{proof}

The explicit structure of $R_{S,\alpha}(t)$ makes it possible to identify precisely the temporal decorrelation of $S_H^{\mathrm{LT}}$ at large time lags. In particular, the leading exponential term determines the rate at which the process forgets its past and therefore governs ergodic and mixing properties.

\begin{lemma}
\label{lem:asymp_subfbm_scaled}
Let $R_{S,\alpha}$ be given by \eqref{eq:cov_scaled_subfbm}. As $t\to\infty$,
\begin{equation}
\label{eq:RSalpha_asymp}
R_{S,\alpha}(t)=e^{-\alpha H t}+O\bigl(e^{-(2-H)\alpha t}\bigr),
\end{equation}
and hence $\lim_{t\to\infty}R_{S,\alpha}(t)=0$.
\end{lemma}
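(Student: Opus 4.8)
The plan is to reduce the asymptotics to a large-argument expansion of the bracketed expression in \eqref{eq:cov_scaled_subfbm}. Setting $x:=e^{\alpha t}$, which tends to $+\infty$ as $t\to\infty$, I would first rewrite
\[
R_{S,\alpha}(t)=x^{-H}\Bigl(x^{2H}+1-\tfrac12\bigl[(x+1)^{2H}+(x-1)^{2H}\bigr]\Bigr).
\]
The essential mechanism to exploit is that the two dominant $x^{2H}$ contributions cancel, so the behaviour is governed by the constant $1$ together with a correction of smaller order emerging from the symmetric pair $(x\pm1)^{2H}$.

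Next I would expand $(x+1)^{2H}+(x-1)^{2H}$ for $x>1$. Writing $u:=1/x\in(0,1)$ and factoring out $x^{2H}$,
\[
(x+1)^{2H}+(x-1)^{2H}=x^{2H}\bigl[(1+u)^{2H}+(1-u)^{2H}\bigr].
\]
Since the map $u\mapsto(1+u)^{2H}+(1-u)^{2H}$ is even and smooth near $u=0$, its expansion has no linear term; the generalized binomial theorem (equivalently, Taylor's formula with remainder) gives $(1+u)^{2H}+(1-u)^{2H}=2+2H(2H-1)u^2+O(u^4)$ as $u\to0$. Substituting $u=1/x$ yields
\[
\tfrac12\bigl[(x+1)^{2H}+(x-1)^{2H}\bigr]=x^{2H}+H(2H-1)x^{2H-2}+O(x^{2H-4}).
\]

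Inserting this into the bracket, the $x^{2H}$ terms cancel and leave
\[
x^{2H}+1-\tfrac12\bigl[(x+1)^{2H}+(x-1)^{2H}\bigr]=1-H(2H-1)x^{2H-2}+O(x^{2H-4}),
\]
so that multiplying through by $x^{-H}$ produces
\[
R_{S,\alpha}(t)=x^{-H}-H(2H-1)x^{H-2}+O(x^{H-4}).
\]
Returning to $x=e^{\alpha t}$, the leading term is $x^{-H}=e^{-\alpha H t}$ and the first correction is $x^{H-2}=e^{-(2-H)\alpha t}$, which establishes \eqref{eq:RSalpha_asymp}. Finally, since $H\in(0,1)$ forces $2-H>H>0$, both exponents are strictly positive and the correction is dominated by the leading term, whence $R_{S,\alpha}(t)\to0$.

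I expect the only delicate point to be the justification of the fractional expansion: because $2H$ need not be an integer, the step $(1\pm u)^{2H}=\sum_{k}\binom{2H}{k}(\pm u)^k$ relies on the generalized binomial series, valid precisely for $|u|<1$, i.e.\ for $x>1$, which holds for all large $t$. Controlling the remainder then secures a genuine $O$-bound rather than a merely formal series. The crucial structural feature is that the \emph{odd}-order terms in the symmetric sum cancel, removing the otherwise dominant $x^{2H-1}$ contribution and pinning the correction at order $x^{2H-2}$; this cancellation is exactly what makes the error exponent $2-H$ strictly larger than the leading exponent $H$.
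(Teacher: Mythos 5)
Your proposal is correct and follows essentially the same route as the paper's proof: substituting $x=e^{\alpha t}$, expanding the symmetric sum $(x+1)^{2H}+(x-1)^{2H}$ via the generalized binomial series in $1/x$, exploiting the cancellation of odd-order terms, and multiplying by $x^{-H}$ to read off the exponents $\alpha H$ and $(2-H)\alpha$. Your added remark on justifying the fractional binomial expansion for $|u|<1$ is a welcome point of rigor that the paper leaves implicit.
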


\begin{proof}
Write $x=e^{\alpha t}$, so $x\to\infty$ as $t\to\infty$. Then
\[
R_{S,\alpha}(t)
= x^{-H}\left(x^{2H}+1-\frac12\bigl[(x+1)^{2H}+(x-1)^{2H}\bigr]\right).
\]
We expand $(x\pm 1)^{2H}$ in powers of $x$. For $|z|<1$,
\begin{eqnarray*}
&&(1\pm z)^{2H}\\
&&=1\pm 2Hz+\frac{2H(2H-1)}{2}z^2\pm\frac{2H(2H-1)(2H-2)}{3!}z^3\\
&&+\frac{2H(2H-1)(2H-2)(2H-3)}{4!}z^4+O(z^5).
\end{eqnarray*}
Adding the $+$ and $-$ expansions cancels odd powers and yields
\[
(1+z)^{2H}+(1-z)^{2H}
=2\left(1+\frac{2H(2H-1)}{2}z^2+\frac{2H(2H-1)(2H-2)(2H-3)}{4!}z^4+O(z^6)\right).
\]
With $z=x^{-1}$ this becomes
\[
(1+x^{-1})^{2H}+(1-x^{-1})^{2H}
=2\left(1+H(2H-1)x^{-2}+O(x^{-4})\right),
\]
and hence
\[
(x+1)^{2H}+(x-1)^{2H}
=2x^{2H}\left(1+H(2H-1)x^{-2}+O(x^{-4})\right).
\]
Therefore
\begin{align*}
x^{2H}+1-\frac12\bigl[(x+1)^{2H}+(x-1)^{2H}\bigr]
&=x^{2H}+1-x^{2H}\left(1+H(2H-1)x^{-2}+O(x^{-4})\right) \\
&=1-H(2H-1)x^{2H-2}+O(x^{2H-4}).
\end{align*}
Multiplying by $x^{-H}$ gives
\[
R_{S,\alpha}(t)
= x^{-H}-H(2H-1)x^{H-2}+O(x^{H-4}),
\]
and reverting to $t$ using $x=e^{\alpha t}$ yields
\[
R_{S,\alpha}(t)
= e^{-\alpha H t}-H(2H-1)e^{-(2-H)\alpha t}+O\bigl(e^{-(4-H)\alpha t}\bigr),
\]
which is \eqref{eq:RSalpha_asymp}. Since $R_{S,\alpha}$ is even, the same decay holds as $t\to-\infty$.
\end{proof}

The lemma shows that the covariance decays exponentially with principal rate $\alpha H$, while subleading corrections decay strictly faster. In particular, $R_{S,\alpha}\in L^1(\R)$, which is a key regularity property in the ergodic theory of stationary Gaussian processes and directly implies strong mixing with an explicit rate.

\begin{theorem}
\label{thm:ergodic_scaled_subfbm}
Let $S_H$ be a sub-fBm with $H\in(0,1)$ and $\alpha>0$. Then the Lamperti transform $S_H^{\mathrm{LT}}$ defined in \eqref{eq:SLT_def} is an ergodic and strongly mixing stationary Gaussian process. Moreover, its $\alpha$-mixing coefficient
\[
\alpha_S(h)
:=\sup\Bigl\{\bigl|\mathbb{P}(A\cap B)-\mathbb{P}(A)\mathbb{P}(B)\bigr|:
A\in\sigma\{S_H^{\mathrm{LT}}(t):t\le0\},\,B\in\sigma\{S_H^{\mathrm{LT}}(t):t\ge h\}\Bigr\}
\]
satisfies
\[
\alpha_S(h)\le C e^{-\alpha H h},\qquad h\to\infty,
\]
for some constant $C>0$. In particular, the explicit exponential mixing rate is
\[
\lambda_S(H,\alpha)=\alpha H.
\]
\end{theorem}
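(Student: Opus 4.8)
The plan is to exploit the fact that for stationary Gaussian processes, ergodicity and strong mixing can be read off directly from the decay of the autocovariance function, so the work reduces to combining the closed-form covariance of Theorem~\ref{thm:cov_subfbm_scaled} with the asymptotics of Lemma~\ref{lem:asymp_subfbm_scaled}. First I would recall the classical criterion, attributable to Maruyama and to the spectral theory of stationary Gaussian processes (see \cite{Maruyama1970,IbragimovRozanov1978}): a centred stationary Gaussian process is ergodic whenever its autocovariance tends to zero (equivalently, its spectral measure has no atoms), and it is strongly mixing whenever the autocovariance tends to zero, a condition already guaranteed here. Since Lemma~\ref{lem:asymp_subfbm_scaled} gives $R_{S,\alpha}(t)\to 0$ as $t\to\infty$ with the explicit principal term $e^{-\alpha H t}$, ergodicity and mixing of $S_H^{\mathrm{LT}}$ follow immediately; the remaining content of the theorem is the quantitative \emph{rate}.

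For the rate, the plan is to bound the $\alpha$-mixing coefficient by a function of the covariance. The key tool is the classical bound for the strong mixing coefficient of a stationary Gaussian process in terms of the maximal correlation, which in turn is controlled by $\sup_{t\ge h}|R_{S,\alpha}(t)|/R_{S,\alpha}(0)$; more precisely, for Gaussian processes one has the well-known estimate (Kolmogorov--Rozanov type) relating $\alpha_S(h)$ to the correlation between the past and future $\sigma$-algebras, and this correlation decays at the rate of the covariance. I would therefore estimate, using Lemma~\ref{lem:asymp_subfbm_scaled}, that $\sup_{t\ge h}|R_{S,\alpha}(t)| \le C' e^{-\alpha H h}$ for a suitable constant $C'$ and all large $h$, since the leading term $e^{-\alpha H t}$ is monotone decreasing and dominates the $O(e^{-(2-H)\alpha t})$ correction. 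Feeding this into the Gaussian mixing bound yields $\alpha_S(h)\le C e^{-\alpha H h}$, whence the mixing rate is exactly $\lambda_S(H,\alpha)=\alpha H$.

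I would present the argument in three short steps: (i) invoke the spectral/covariance criterion to obtain ergodicity and strong mixing from $R_{S,\alpha}(t)\to 0$; (ii) quote the Gaussian strong-mixing inequality bounding $\alpha_S(h)$ by a constant multiple of the tail supremum of the normalised covariance; (iii) insert the exponential tail estimate from Lemma~\ref{lem:asymp_subfbm_scaled} to extract the exponent $\alpha H$. The main obstacle, and the point requiring care, is step (ii): the precise form of the Gaussian mixing inequality matters, since the strong mixing coefficient is not simply equal to the covariance but is controlled by it, and one must ensure the bound is stated for the \emph{continuous-time} process and for the past/future $\sigma$-algebras as defined in the theorem. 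I expect the cleanest route is to cite the standard result that for a centred stationary Gaussian process the $\alpha$-mixing coefficient and the maximal correlation coefficient are comparable, and that the latter is governed by the supremum of the off-diagonal correlation over the relevant time separation; the exponential decay of $R_{S,\alpha}$ then transfers directly to $\alpha_S(h)$. A secondary point worth verifying is that $R_{S,\alpha}(0)=2-2^{2H-1}>0$ for all $H\in(0,1)$, so the normalisation is legitimate and no degeneracy arises.
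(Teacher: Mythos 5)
Your steps (i) and (iii) match the paper's proof, but step (ii) --- the bridge from covariance decay to a bound on $\alpha_S(h)$ --- contains a genuine gap. You propose to control $\alpha_S(h)$ by the maximal correlation coefficient $\rho(h)$ between the past and future $\sigma$-algebras, and then to control $\rho(h)$ by $\sup_{t\ge h}|R_{S,\alpha}(t)|/R_{S,\alpha}(0)$. The first comparison is fine (by Kolmogorov--Rozanov, $\alpha$ and $\rho$ are comparable for Gaussian processes), but the second is false in general. For a Gaussian process, $\rho(h)$ is the supremum of correlations between elements of the \emph{closed linear spans} of $\{S_H^{\mathrm{LT}}(s): s\le 0\}$ and $\{S_H^{\mathrm{LT}}(t): t\ge h\}$, i.e.\ over arbitrary linear combinations, not over pairs of single coordinates; the pairwise supremum is only a \emph{lower} bound for $\rho(h)$, since small pairwise correlations can accumulate in sums. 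Indeed, if your inequality were true, every stationary Gaussian process with covariance tending to zero would be strongly ($\alpha$-)mixing, and this is known to fail: a stationary Gaussian process whose spectral measure is a singular continuous Rajchman measure has $R(t)\to 0$ (hence is ergodic and mixing in the ergodic-theoretic sense) yet is not $\alpha$-mixing at all, because $\alpha$-mixing of a Gaussian process forces an absolutely continuous spectrum. For the same reason, the assertion in your step (i) that covariance decay alone already yields ``strong mixing'' is correct only for the ergodic-theoretic notion of mixing, not for the Rosenblatt $\alpha$-mixing coefficient that the theorem actually bounds.

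The paper takes a different quantitative route that avoids this trap: it invokes the Ibragimov--Rozanov inequality
$\alpha_S(h)\le C_2\int_{|t|\ge h}|R_{S,\alpha}(t)|\,dt$,
an $L^1$ tail bound rather than an $L^\infty$ tail bound. (The hypothesis implicit in such a bound, $R_{S,\alpha}\in L^1(\R)$, already excludes the singular-spectrum counterexample, since integrable covariance forces an absolutely continuous spectral measure with bounded continuous density.) Combining this with the pointwise estimate $|R_{S,\alpha}(t)|\le C_1 e^{-\alpha H|t|}$ obtained from Lemma~\ref{lem:asymp_subfbm_scaled}, the paper computes
\[
\alpha_S(h)\le C_2\int_h^\infty C_1 e^{-\alpha H t}\,dt=\frac{C_1C_2}{\alpha H}\,e^{-\alpha H h},
\]
which yields the stated rate $\lambda_S(H,\alpha)=\alpha H$. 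Your outline can be repaired by replacing the sup-based correlation bound in step (ii) with this integral bound; the remaining ingredients of your argument (the ergodicity criterion, the exponential tail estimate, and the non-degeneracy check $R_{S,\alpha}(0)=2-2^{2H-1}>0$ for $H\in(0,1)$) are consistent with the paper's proof.
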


\begin{proof}
Set $X(t):=S_H^{\mathrm{LT}}(t)$ and let $R_{S,\alpha}(t)=\E[X(t)X(0)]$. Lemma~\ref{lem:asymp_subfbm_scaled} implies $\lim_{|t|\to\infty}R_{S,\alpha}(t)=0$ with exponential rate. For a stationary Gaussian process this is equivalent to ergodicity; see, for example, \cite[Ch.~5]{IbragimovRozanov1978} or \cite{Maruyama1970}.

For strong mixing and the explicit rate, we first obtain a sharp bound on the covariance. From \eqref{eq:RSalpha_asymp} we have, for large $|t|$,
\[
R_{S,\alpha}(t)
= e^{-\alpha H t}+O\bigl(e^{-(2-H)\alpha t}\bigr),
\]
so that there exists $C_1>0$ with
\[
|R_{S,\alpha}(t)|\le C_1 e^{-\alpha H|t|},\qquad t\in\R.
\]
For stationary Gaussian processes, the Ibragimov--Rozanov inequality (see \cite[Theorem~17.2.3]{IbragimovRozanov1978}) yields
\[
\alpha_S(h)\le C_2\int_{|t|\ge h}|R_{S,\alpha}(t)|\,dt,
\]
for some $C_2>0$. Using the above bound,
\[
\alpha_S(h)
\le C_2\int_{h}^{\infty} C_1 e^{-\alpha H t}\,dt
= \frac{C_1C_2}{\alpha H}\,e^{-\alpha H h}.
\]
Thus $\alpha_S(h)\le C e^{-\alpha H h}$ with $C=C_1C_2/(\alpha H)$ and the exponential mixing rate is exactly $\lambda_S(H,\alpha)=\alpha H$.
\end{proof}

The theorem identifies the Lamperti transform $S_H^{\mathrm{LT}}$ as a rapidly mixing Gaussian field, with a decorrelation rate entirely determined by the product $\alpha H$. This explicit rate will later serve as a reference when comparing different Gaussian processes and their Langevin equation counterparts.

\begin{corollary}
\label{cor:ergodic_avg_scaled_subfbm}
Let $S_H$ and $S_H^{\mathrm{LT}}$ be as above, and $f\in L^1(\mathbb{P})$. By Birkhoff's ergodic theorem,
\begin{equation}
\label{eq:ergodic_limit_scaled}
\frac{1}{T}\int_0^T f(S_H^{\mathrm{LT}}(s))\,ds
\;\xrightarrow[T\to\infty]{a.s.}\;
\E\bigl[f(S_H^{\mathrm{LT}}(0))\bigr].
\end{equation}
Using the inverse Lamperti relation \eqref{eq:inverse_general_scaled} with $X(t)=S_H(t^\alpha)$, we have
\begin{equation}
\label{eq:inverse_scaled_subfbm}
S_H(t)=t^{\alpha H} S_H^{\mathrm{LT}}\!\left(\frac{\log t}{\alpha}\right),\qquad t>0.
\end{equation}
In particular, for integers $k\ge 1$, taking $f(x)=x^k$ in \eqref{eq:ergodic_limit_scaled} gives
\[
\frac{1}{T}\int_0^T (S_H^{\mathrm{LT}}(s))^k\,ds
\;\xrightarrow[T\to\infty]{a.s.}\;
\E\bigl[(S_H^{\mathrm{LT}}(0))^k\bigr],
\]
and
\[
\E[S_H(t)^k]=t^{k\alpha H}\,\E\bigl[(S_H^{\mathrm{LT}}(0))^k\bigr],\qquad t>0.
\]
Choosing $f(x)=e^{iu x}$ yields almost sure convergence of the time averaged characteristic function and shows that the one-time law of $S_H(t)$ can be reconstructed from a single long trajectory of $S_H^{\mathrm{LT}}$.
\end{corollary}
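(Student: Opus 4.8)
The plan is to derive every assertion from the two structural facts already in hand: stationarity of $S_H^{\mathrm{LT}}$ (Theorem~\ref{thm:stationary_scaled_subfbm}) together with its ergodicity (Theorem~\ref{thm:ergodic_scaled_subfbm}), and the exact inverse relation~\eqref{eq:inverse_general_scaled}. First I would make the measure-preserving structure explicit: since $S_H^{\mathrm{LT}}$ is a measurable stationary process with continuous sample paths (inherited from the continuous modification of $S_H$), it is generated by a measurable, measure-preserving flow $\{\theta_s\}_{s\in\R}$ on path space, with $S_H^{\mathrm{LT}}(s,\omega)=S_H^{\mathrm{LT}}(0,\theta_s\omega)$. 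Ergodicity of this flow is precisely what Theorem~\ref{thm:ergodic_scaled_subfbm} supplies, via the decay $R_{S,\alpha}(t)\to 0$ and the Maruyama--Grenander spectral criterion for stationary Gaussian processes (absence of atoms in the spectral measure, equivalently vanishing Ces\`aro average of the covariance).

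With the flow in place, the first convergence is a direct application of the continuous-time Birkhoff--Khinchin ergodic theorem to $g(\omega):=f\bigl(S_H^{\mathrm{LT}}(0,\omega)\bigr)$. The only hypothesis to verify is $g\in L^1(\mathbb{P})$, i.e.\ $\E\bigl|f(S_H^{\mathrm{LT}}(0))\bigr|<\infty$; joint measurability of $(s,\omega)\mapsto f(S_H^{\mathrm{LT}}(s,\omega))$ and path continuity then guarantee that the time integral is well defined pathwise. For $f(x)=x^{k}$ integrability is automatic, because $S_H^{\mathrm{LT}}(0)=S_H(1)$ is centred Gaussian with finite variance $R_{S,\alpha}(0)=2-2^{2H-1}$ by Theorem~\ref{thm:cov_subfbm_scaled}, so all of its moments are finite and the stated almost sure limit follows.

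The reconstruction formulas then follow algebraically from~\eqref{eq:inverse_scaled_subfbm}. Raising $S_H(t)=t^{\alpha H}S_H^{\mathrm{LT}}\!\bigl(\tfrac{\log t}{\alpha}\bigr)$ to the $k$-th power and taking expectations gives $\E[S_H(t)^k]=t^{k\alpha H}\,\E\bigl[(S_H^{\mathrm{LT}}(\tfrac{\log t}{\alpha}))^{k}\bigr]$, and since the one-dimensional marginal of the stationary process $S_H^{\mathrm{LT}}$ is independent of its time argument, the right-hand side equals $t^{k\alpha H}\E[(S_H^{\mathrm{LT}}(0))^{k}]$. Finally, taking $f(x)=e^{iux}$ --- bounded, hence trivially in $L^1$ --- and applying the ergodic theorem to the real and imaginary parts separately yields almost sure convergence of the time-averaged characteristic function to $\phi(u):=\E[e^{iuS_H^{\mathrm{LT}}(0)}]$. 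The same inverse relation gives $\E[e^{iuS_H(t)}]=\phi(t^{\alpha H}u)$, and because a characteristic function determines a law uniquely, the one-time distribution of $S_H(t)$ is recovered from a single trajectory of $S_H^{\mathrm{LT}}$.

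I expect the main obstacle to be the rigorous passage effected in the first paragraph: upgrading the covariance-level ergodicity of Theorem~\ref{thm:ergodic_scaled_subfbm} to genuine ergodicity of the measure-preserving flow, and checking the joint-measurability hypotheses required by the \emph{continuous-time} (rather than discrete-time) Birkhoff theorem. Once the flow is known to be ergodic and the paths are continuous, the remaining steps reduce to the finiteness of Gaussian moments and the elementary algebra of the inverse map.
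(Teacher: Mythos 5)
Your ergodic-theorem half is correct and follows the paper's own route: Theorem~\ref{thm:ergodic_scaled_subfbm} supplies ergodicity of the stationary Gaussian process, the continuous-time Birkhoff theorem applied to $g(\omega)=f(S_H^{\mathrm{LT}}(0,\omega))$ gives \eqref{eq:ergodic_limit_scaled}, and integrability is automatic for $f(x)=x^k$ (Gaussian moments, variance $2-2^{2H-1}$) and for $f(x)=e^{iux}$ (boundedness). Your explicit construction of the measure-preserving flow and the measurability checks are more careful than the paper, which simply invokes Birkhoff; that part needs no repair.

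The genuine gap is in the reconstruction step, precisely where you declare that everything "follows algebraically from \eqref{eq:inverse_scaled_subfbm}". That identity is the one thing that must be \emph{verified}, and it fails as stated. From \eqref{eq:SLT_def}, $S_H^{\mathrm{LT}}(u)=e^{-\alpha H u}S_H(e^{\alpha u})$, so setting $u=\log t/\alpha$ gives $S_H^{\mathrm{LT}}\!\left(\tfrac{\log t}{\alpha}\right)=t^{-H}S_H(t)$, i.e.\ $S_H(t)=t^{H}\,S_H^{\mathrm{LT}}\!\left(\tfrac{\log t}{\alpha}\right)$: the prefactor is $t^{H}$, not $t^{\alpha H}$. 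The mismatch arises because applying Definition~\ref{def:lamperti_scaled} to $X(t)=S_H(t^\alpha)$ would produce $e^{-\alpha Ht}X(e^{\alpha t})=e^{-\alpha Ht}S_H(e^{\alpha^2 t})$, which is \emph{not} $S_H^{\mathrm{LT}}$; the process in \eqref{eq:SLT_def} is instead the classical Lamperti transform of $X$ with exponent $\alpha H$, whose inverse reads $S_H(t^\alpha)=t^{\alpha H}S_H^{\mathrm{LT}}(\log t)$ --- this is exactly the (correct) form used in the bi-fractional analogue, Corollary~\ref{cor:ergodic_avg_bifbm}. Consequently the moment formula you derive, $\E[S_H(t)^k]=t^{k\alpha H}\E[(S_H^{\mathrm{LT}}(0))^k]$, is false for $\alpha\neq 1$; the correct version is $\E[S_H(t)^k]=t^{kH}\E[(S_H^{\mathrm{LT}}(0))^k]$, and likewise $\E[e^{iuS_H(t)}]=\phi(t^{H}u)$ rather than $\phi(t^{\alpha H}u)$. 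A one-line sanity check exposes this: by \eqref{eq:cov_subfbm}, $\E[S_H(t)^2]=(2-2^{2H-1})t^{2H}$, which scales as $t^{2H}$ independently of $\alpha$, while $\E[(S_H^{\mathrm{LT}}(0))^2]=2-2^{2H-1}$. The error originates in the paper's own statement, but a blind proof should have caught it: checking the claimed inverse relation against the definition is part of proving the corollary, and propagating it untested is where your argument breaks.
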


The corollary highlights the practical consequence of ergodicity: all one time marginal distributions and moments of the original non-stationary process $S_H(t)$ can be recovered from temporal averages along a single realisation of the stationary Lamperti transform. This observation underscores the usefulness of the Lamperti representation in statistical inference for self-similar processes.\\

\begin{remark}
In the sub-fractional case, the single effective exponent $\alpha H$ entirely
governs the exponential mixing rate of the Lamperti transform.
\end{remark}
\subsection{Lamperti transformation for $bi-fBm$}
\label{sec:bi}

We now turn to $bi-fBm$, a two-parameter Gaussian extension of fractional Brownian motion, and apply the same scaled Lamperti framework. The additional parameter $K$ enriches the covariance structure and allows for a wider range of temporal dependencies. It is therefore of interest to understand how both $H$ and $K$ enter the mixing and ergodic properties of the corresponding Lamperti transform.

\begin{definition}
\label{def:bifbm}
Let $H\in(0,1)$ and $K\in(0,1]$. A $bi-fBm$ $B_{H,K}=\{B_{H,K}(t)\}_{t\ge0}$ is a centred Gaussian process with covariance
\begin{equation}
\label{cov_bifbm}
\E[B_{H,K}(t)B_{H,K}(s)]
=2^{-K}\Bigl((t^{2H}+s^{2H})^{K}-|t-s|^{2HK}\Bigr),\qquad s,t\ge0.
\end{equation}
The process is $HK$-self-similar, i.e.
\begin{equation}
\label{selfsim_bifbm}
\{B_{H,K}(ct)\}_{t\ge0}
\;\stackrel{d}{=}\;
\{c^{HK} B_{H,K}(t)\}_{t\ge0},\qquad c>0.
\end{equation}
See, for instance, \cite{RussoTudor2006,HoudreVilla2003}.
\end{definition}

For $\alpha>0$ we again introduce a power law time change,
\begin{equation}
\label{scaled_bifbm}
X(t):=B_{H,K}(t^\alpha),\qquad t\ge0,
\end{equation}
which yields an $\alpha HK$-self-similar process:
\begin{equation}
\label{selfsim_scaled_bifbm}
\{X(ct)\}_{t\ge0}\;\stackrel{d}{=}\;\{c^{\alpha HK}X(t)\}_{t\ge0},\qquad c>0.
\end{equation}
Applying the scaled Lamperti map of Definition~\ref{def:lamperti_scaled} to $X$ produces a stationary Gaussian field encoding the bi-fractional structure. As in the sub-fractional case, this stationary representation is the natural object for ergodic and mixing analysis.

\begin{definition}
\label{def:lamperti_bifbm}
Let $X$ be as in \eqref{scaled_bifbm}. Its Lamperti transform is
\begin{equation}
\label{lamperti_general_bifbm}
B_{H,K}^{\mathrm{LT}}(t)
:=e^{-\alpha HK t} X(e^{\alpha t})
=e^{-\alpha HK t} B_{H,K}(e^{\alpha t}),\qquad t\in\R,
\end{equation}
with inverse map
\begin{equation}
\label{inverse_lamperti_bifbm}
X(t)=t^{\alpha HK} B_{H,K}^{\mathrm{LT}}\!\left(\frac{\log t}{\alpha}\right),\qquad t>0.
\end{equation}
\end{definition}

\begin{theorem}
\label{thm:stationary_bifbm}
For each $H\in(0,1)$, $K\in(0,1]$ and $\alpha>0$, the process $B_{H,K}^{\mathrm{LT}}$ defined in \eqref{lamperti_general_bifbm} is a centred stationary Gaussian process.
\end{theorem}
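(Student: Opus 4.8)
The plan is to follow the same two-step template as in the proof of Theorem~\ref{thm:stationary_scaled_subfbm}, now with the self-similarity index $HK$ in place of $H$ and the matching prefactor $e^{-\alpha HK t}$ from \eqref{lamperti_general_bifbm}. Since $B_{H,K}$ is a centred Gaussian process and the map \eqref{lamperti_general_bifbm} applies only a deterministic rescaling in time and amplitude, the image $B_{H,K}^{\mathrm{LT}}$ is automatically centred and Gaussian; the substantive point is stationarity.

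For stationarity I would fix a shift $u\in\R$ together with arbitrary times $t_1,\dots,t_n\in\R$ and examine the shifted vector $\bigl(B_{H,K}^{\mathrm{LT}}(t_1+u),\dots,B_{H,K}^{\mathrm{LT}}(t_n+u)\bigr)$. Writing each coordinate as
\[
B_{H,K}^{\mathrm{LT}}(t_j+u)=e^{-\alpha HK u}\,e^{-\alpha HK t_j}\,B_{H,K}\!\bigl(e^{\alpha u}e^{\alpha t_j}\bigr),
\]
I would invoke the $HK$-self-similarity \eqref{selfsim_bifbm} with scaling constant $c=e^{\alpha u}$. The point worth stressing is that \eqref{selfsim_bifbm} is an equality in law of the \emph{entire} process, $\{B_{H,K}(e^{\alpha u}s)\}_{s\ge0}\stackrel{d}{=}\{e^{\alpha HK u}B_{H,K}(s)\}_{s\ge0}$, so it applies jointly to the finite collection of arguments $s=e^{\alpha t_1},\dots,e^{\alpha t_n}$ rather than only to one-dimensional marginals. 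Evaluating at these points gives
\[
\bigl(B_{H,K}(e^{\alpha u}e^{\alpha t_1}),\dots,B_{H,K}(e^{\alpha u}e^{\alpha t_n})\bigr)\stackrel{d}{=}\bigl(e^{\alpha HK u}B_{H,K}(e^{\alpha t_1}),\dots,e^{\alpha HK u}B_{H,K}(e^{\alpha t_n})\bigr).
\]

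Multiplying each coordinate by the deterministic factor $e^{-\alpha HK u}e^{-\alpha HK t_j}$ preserves equality in distribution, and the two occurrences of $e^{\pm\alpha HK u}$ cancel, leaving precisely $\bigl(B_{H,K}^{\mathrm{LT}}(t_1),\dots,B_{H,K}^{\mathrm{LT}}(t_n)\bigr)$. Hence every finite-dimensional distribution is shift invariant, which is stationarity. I do not anticipate a genuine obstacle here: the argument is structurally identical to the sub-fractional case, and the only step deserving care is the joint (process-level rather than marginal) application of self-similarity, which is exactly what upgrades one-time shift invariance to stationarity of the full finite-dimensional law. An equally short alternative, available because the process is Gaussian, would be to verify directly that $\E\!\bigl[B_{H,K}^{\mathrm{LT}}(t)B_{H,K}^{\mathrm{LT}}(s)\bigr]$ depends on $t$ and $s$ only through $t-s$; this reduces to the covariance computation of the following subsection, but the self-similarity route above is cleaner and avoids any explicit algebra.
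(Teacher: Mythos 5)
Your proposal is correct and follows essentially the same route as the paper: Gaussianity and centredness are immediate from linearity, and stationarity follows by applying the $HK$-self-similarity \eqref{selfsim_bifbm} with $c=e^{\alpha u}$ so that the factors $e^{\pm\alpha HK u}$ cancel. If anything, your version is slightly more careful than the paper's, since you spell out the joint, process-level application of self-similarity to the vector $\bigl(B_{H,K}(e^{\alpha u}e^{\alpha t_1}),\dots,B_{H,K}(e^{\alpha u}e^{\alpha t_n})\bigr)$, whereas the paper writes the computation for a single time point and leaves the finite-dimensional extension implicit.
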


\begin{proof}
Gaussianity and centredness follow as before. For stationarity, fix $u\in\R$ and compute
\[
B_{H,K}^{\mathrm{LT}}(t+u)
= e^{-\alpha HK(t+u)} B_{H,K}(e^{\alpha(t+u)})
= e^{-\alpha HK t} e^{-\alpha HK u} B_{H,K}(e^{\alpha u} e^{\alpha t}).
\]
By $HK$-self-similarity \eqref{selfsim_bifbm},
\[
B_{H,K}(e^{\alpha u} e^{\alpha t})
\;\stackrel{d}{=}\;
(e^{\alpha u})^{HK} B_{H,K}(e^{\alpha t})
= e^{\alpha HK u} B_{H,K}(e^{\alpha t}).
\]
Thus
\[
B_{H,K}^{\mathrm{LT}}(t+u)
\;\stackrel{d}{=}\;
e^{-\alpha HK t} B_{H,K}(e^{\alpha t})
= B_{H,K}^{\mathrm{LT}}(t),
\]
so $B_{H,K}^{\mathrm{LT}}$ is stationary.
\end{proof}

In analogy with the sub-fractional case, we next identify the autocovariance function of $B_{H,K}^{\mathrm{LT}}$. This will reveal how the two scaling parameters $H$ and $K$ interact under the Lamperti transform and will be crucial for determining the precise mixing rate.

\begin{theorem}
\label{thm:cov_bifbm_autocov}
Let $B_{H,K}^{\mathrm{LT}}(t)$ be the Lamperti transformation of $bi-fBm$ \eqref{lamperti_general_bifbm}. Define the autocovariance function
\[
R_{B,\alpha}(t):=\mathbb{E}\bigl[B_{H,K}^{\mathrm{LT}}(t)\,B_{H,K}^{\mathrm{LT}}(0)\bigr],\qquad t\in\mathbb{R}.
\]
Then, for every $t\in\mathbb{R}$,
\begin{equation}
\label{eq:RBalph_general_final}
R_{B,\alpha}(t)
= e^{-\alpha HK t}\,\mathbb{E}\bigl[B_{H,K}(e^{\alpha t})B_{H,K}(1)\bigr],
\end{equation}
and this admits the explicit representation
\begin{equation}
\label{eq:RBalph_explicit_final}
R_{B,\alpha}(t)
= 2^{-K} e^{-\alpha HK t}
\Bigl((e^{2\alpha H t}+1)^{K} - (e^{\alpha t}-1)^{2HK}\Bigr),\qquad t\in\mathbb{R}.
\end{equation}
In particular, $R_{B,\alpha}(0)=1$ and $R_{B,\alpha}$ is an even function.
\end{theorem}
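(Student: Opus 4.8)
The plan is to mirror the computation already carried out for the sub-fractional case in Theorem~\ref{thm:cov_subfbm_scaled}, exploiting the fact that $B_{H,K}^{\mathrm{LT}}(0)=B_{H,K}(1)$ so that the whole autocovariance collapses to a single two-point covariance of the underlying $bi-fBm$. First I would unfold the definition \eqref{lamperti_general_bifbm}: since $B_{H,K}^{\mathrm{LT}}(t)=e^{-\alpha HK t}B_{H,K}(e^{\alpha t})$ and $B_{H,K}^{\mathrm{LT}}(0)=e^{0}B_{H,K}(e^{0})=B_{H,K}(1)$, pulling the deterministic prefactor out of the expectation gives
\[
R_{B,\alpha}(t)=e^{-\alpha HK t}\,\mathbb{E}\bigl[B_{H,K}(e^{\alpha t})B_{H,K}(1)\bigr],
\]
which is exactly \eqref{eq:RBalph_general_final}.

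Next I would substitute the bi-fractional covariance \eqref{cov_bifbm} with arguments $(t,s)=(e^{\alpha t},1)$. Using $(e^{\alpha t})^{2H}=e^{2\alpha H t}$ and $1^{2H}=1$ turns the first term into $(e^{2\alpha H t}+1)^{K}$, while the second term becomes $|e^{\alpha t}-1|^{2HK}$. Because $e^{\alpha t}>0$, for $t\ge 0$ one has $|e^{\alpha t}-1|=e^{\alpha t}-1$, and reinstating the prefactor yields the stated closed form \eqref{eq:RBalph_explicit_final}. Evaluating at $t=0$ is then a quick check: the bracket collapses to $(1+1)^{K}-(1-1)^{2HK}=2^{K}$, so $R_{B,\alpha}(0)=2^{-K}\cdot 2^{K}=1$, in agreement with $\mathrm{Var}(B_{H,K}(1))$ read off directly from \eqref{cov_bifbm} at $s=t=1$.

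The one genuinely delicate point — and the only place needing care beyond routine algebra — is the behaviour for $t<0$, where $e^{\alpha t}-1<0$ and the factor $(e^{\alpha t}-1)^{2HK}$ written in \eqref{eq:RBalph_explicit_final} is literally a negative base raised to the non-integer power $2HK$. I would handle this by first proving the formula for $t\ge 0$, where the absolute value drops unambiguously, and then extending to all of $\R$ by evenness. Evenness I would obtain directly from the stationarity already established in Theorem~\ref{thm:stationary_bifbm}, which gives $R_{B,\alpha}(-t)=\mathbb{E}[B_{H,K}^{\mathrm{LT}}(-t)B_{H,K}^{\mathrm{LT}}(0)]=\mathbb{E}[B_{H,K}^{\mathrm{LT}}(0)B_{H,K}^{\mathrm{LT}}(t)]=R_{B,\alpha}(t)$. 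Alternatively, one can verify evenness by hand, keeping $|e^{\alpha t}-1|^{2HK}$ throughout and using the identities $(e^{-2\alpha H t}+1)^{K}=e^{-2\alpha HK t}(e^{2\alpha H t}+1)^{K}$ and $|e^{-\alpha t}-1|=e^{-\alpha t}|e^{\alpha t}-1|$ to see that the factor $e^{\alpha HK t}$ cancels the surplus exponentials exactly; this is the cleanest way to confirm that \eqref{eq:RBalph_explicit_final} is consistent across the origin once the absolute value is understood to be present.
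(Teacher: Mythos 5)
Your proposal is correct and follows essentially the same route as the paper's proof: unfold the Lamperti definition to get \eqref{eq:RBalph_general_final}, substitute the covariance \eqref{cov_bifbm} at $(e^{\alpha t},1)$, verify $R_{B,\alpha}(0)=1$, and extend to $t<0$ via evenness from stationarity. Your explicit algebraic check of evenness (using $e^{-2\alpha Ht}+1=e^{-2\alpha Ht}(e^{2\alpha Ht}+1)$ and $|e^{-\alpha t}-1|=e^{-\alpha t}|e^{\alpha t}-1|$) is a slightly more careful treatment of the absolute-value subtlety than the paper gives, but it is a refinement of the same argument rather than a different one.
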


\begin{proof}
By definition of $R_{B,\alpha}$ and of the Lamperti transform, we have for each $t\in\mathbb{R}$,
\begin{align*}
R_{B,\alpha}(t)
&= \mathbb{E}\bigl[B_{H,K}^{\mathrm{LT}}(t)\,B_{H,K}^{\mathrm{LT}}(0)\bigr]
= \mathbb{E}\bigl[e^{-\alpha HK t} B_{H,K}(e^{\alpha t})\cdot e^{-\alpha HK\cdot 0} B_{H,K}(e^{\alpha\cdot 0})\bigr]\\
&= \mathbb{E}\bigl[e^{-\alpha HK t} B_{H,K}(e^{\alpha t}) B_{H,K}(1)\bigr].
\end{align*}
The factor $e^{-\alpha HK t}$ is deterministic, hence it can be taken outside the expectation, which yields
\[
R_{B,\alpha}(t)
= e^{-\alpha HK t}\,\mathbb{E}\bigl[B_{H,K}(e^{\alpha t})B_{H,K}(1)\bigr],
\]
and this proves \eqref{eq:RBalph_general_final}.

To obtain an explicit expression, we evaluate the covariance of the underlying $bi-fBm$ at the pair of times $(e^{\alpha t},1)$. Using the covariance formula \eqref{cov_bifbm} with $s=e^{\alpha t}$ and $t=1$, we obtain
\begin{align*}
\mathbb{E}\bigl[B_{H,K}(e^{\alpha t})B_{H,K}(1)\bigr]
&= 2^{-K}\Bigl(((e^{\alpha t})^{2H}+1^{2H})^{K} - |e^{\alpha t}-1|^{2HK}\Bigr) \\
&= 2^{-K}\Bigl((e^{2\alpha H t}+1)^{K} - |e^{\alpha t}-1|^{2HK}\Bigr).
\end{align*}
In particular, for $t\ge0$ we have $e^{\alpha t}\ge1$ and thus $|e^{\alpha t}-1|=e^{\alpha t}-1$, so that for $t\ge0$
\[
\mathbb{E}\bigl[B_{H,K}(e^{\alpha t})B_{H,K}(1)\bigr]
= 2^{-K}\Bigl((e^{2\alpha H t}+1)^{K} - (e^{\alpha t}-1)^{2HK}\Bigr).
\]
Substituting this into \eqref{eq:RBalph_general_final} gives, for all $t\ge0$,
\[
R_{B,\alpha}(t)
= 2^{-K} e^{-\alpha HK t}
\Bigl((e^{2\alpha H t}+1)^{K} - (e^{\alpha t}-1)^{2HK}\Bigr),
\]
which is \eqref{eq:RBalph_explicit_final} on $[0,\infty)$.

We now evaluate the value at zero. On the one hand, by the definition of $R_{B,\alpha}$,
\[
R_{B,\alpha}(0) = \mathbb{E}\bigl[B_{H,K}^{\mathrm{LT}}(0)^2\bigr]
= \mathrm{Var}\bigl(B_{H,K}^{\mathrm{LT}}(0)\bigr).
\]
On the other hand, from the Lamperti relation,
\[
B_{H,K}^{\mathrm{LT}}(0)=e^{-\alpha HK\cdot 0} B_{H,K}(e^{\alpha\cdot 0})=B_{H,K}(1),
\]
so
\[
R_{B,\alpha}(0)=\mathrm{Var}(B_{H,K}(1)).
\]
Using the covariance formula with $s=t=1$,
\[
\mathrm{Var}(B_{H,K}(1))
=2^{-K}\Bigl((1^{2H}+1^{2H})^{K}-|1-1|^{2HK}\Bigr)
=2^{-K}(2^{K}-0)=1.
\]
Thus $R_{B,\alpha}(0)=1$, which agrees with \eqref{eq:RBalph_explicit_final} evaluated at $t=0$, since $e^{\alpha\cdot 0}=1$ gives
\[
R_{B,\alpha}(0)
=2^{-K} e^{0}\Bigl((1+1)^{K}-(1-1)^{2HK}\Bigr)=2^{-K}2^{K}=1.
\]

It remains to show that $R_{B,\alpha}$ is an even function and to extend the explicit expression to all $t\in\mathbb{R}$. Since $B_{H,K}^{\mathrm{LT}}$ is stationary and centred, the usual covariance symmetry implies
\[
R_{B,\alpha}(-t)
=\mathbb{E}\bigl[B_{H,K}^{\mathrm{LT}}(-t)B_{H,K}^{\mathrm{LT}}(0)\bigr]
=\mathbb{E}\bigl[B_{H,K}^{\mathrm{LT}}(0)B_{H,K}^{\mathrm{LT}}(t)\bigr]
=R_{B,\alpha}(t),\qquad t\in\mathbb{R},
\]
so $R_{B,\alpha}$ is even. In particular, for $t<0$ we have
\[
R_{B,\alpha}(t)=R_{B,\alpha}(-t),
\]
and the right-hand side is given by \eqref{eq:RBalph_explicit_final} with $-t$ in place of $t$. This shows that the formula
\[
R_{B,\alpha}(t)
= 2^{-K} e^{-\alpha HK t}
\Bigl((e^{2\alpha H t}+1)^{K} - (e^{\alpha t}-1)^{2HK}\Bigr)
\]
holds for all $t\in\mathbb{R}$, after taking into account the absolute value in the term $|e^{\alpha t}-1|^{2HK}$ when $t<0$. Combining these arguments we obtain \eqref{eq:RBalph_general_final}, \eqref{eq:RBalph_explicit_final}, the value $R_{B,\alpha}(0)=1$, and the evenness of $R_{B,\alpha}$, which completes the proof.
\end{proof}

The explicit expression \eqref{eq:RBalph_explicit_final} reveals a competition between two exponential scales, governed by the exponents $2H-HK$ and $1-HK$ that will emerge below. This competition is responsible for the richer range of mixing behaviours in the bi-fractional setting, as compared with the simpler sub-fractional case.

\begin{lemma}
\label{lem:asymp_bifbm}
There exists a constant $c(H,K,\alpha)>0$ such that
\begin{equation}
\label{eq:RBalpha_asymp}
R_{B,\alpha}(t)=O\bigl(e^{-c(H,K,\alpha) t}\bigr),\qquad t\to\infty.
\end{equation}
In particular $\lim_{t\to\infty}R_{B,\alpha}(t)=0$.
\end{lemma}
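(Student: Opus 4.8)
The plan is to reduce the statement to a large-argument asymptotic analysis via the substitution $x=e^{\alpha t}$, so that $x\to\infty$ as $t\to\infty$. Under this substitution the explicit formula \eqref{eq:RBalph_explicit_final} becomes
\[
R_{B,\alpha}(t)=2^{-K}x^{-HK}\Bigl((x^{2H}+1)^{K}-(x-1)^{2HK}\Bigr),
\]
and the whole question reduces to expanding the bracketed difference in decreasing powers of $x$ and tracking which surviving power dominates.

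First I would expand each term through the generalized binomial series after factoring out the leading power:
\[
(x^{2H}+1)^{K}=x^{2HK}\bigl(1+Kx^{-2H}+O(x^{-4H})\bigr),
\]
\[
(x-1)^{2HK}=x^{2HK}\bigl(1-2HKx^{-1}+O(x^{-2})\bigr).
\]
The decisive observation — and the point on which the lemma turns — is that the leading terms $x^{2HK}$ cancel exactly in the difference. What remains is
\[
(x^{2H}+1)^{K}-(x-1)^{2HK}=Kx^{2HK-2H}+2HKx^{2HK-1}+(\text{lower order}),
\]
so that after multiplying by the prefactor $2^{-K}x^{-HK}$ one obtains
\[
R_{B,\alpha}(t)=2^{-K}\bigl(Kx^{HK-2H}+2HKx^{HK-1}\bigr)+(\text{lower order}).
\]

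Both surviving exponents are strictly negative: $HK-2H=H(K-2)<0$ because $K\le1$, and $HK-1<0$ because $HK<1$. Reverting to $t$ through $x=e^{\alpha t}$, the two contributions decay like $e^{-\alpha(2H-HK)t}$ and $e^{-\alpha(1-HK)t}$ respectively, and the slower of the two governs the tail. This identifies the admissible rate as
\[
c(H,K,\alpha)=\alpha\min\{\,2H-HK,\;1-HK\,\}=\alpha\bigl(\min\{2H,1\}-HK\bigr)>0,
\]
with the threshold $H=1/2$ separating the two regimes: for $H<1/2$ the $x^{HK-2H}$ term dominates, while for $H>1/2$ the $x^{HK-1}$ term does, and at $H=1/2$ the two coincide. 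The main obstacle is precisely this exact cancellation of the $x^{2HK}$ terms; without it the prefactor $x^{-HK}$ would leave a growing contribution of order $x^{HK}$, so the decay is a genuinely second-order effect and the subleading coefficients must be handled with care. Once the cancellation is secured, the bound $R_{B,\alpha}(t)=O\bigl(e^{-c(H,K,\alpha)t}\bigr)$ and the limit $\lim_{t\to\infty}R_{B,\alpha}(t)=0$ follow at once.
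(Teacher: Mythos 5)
Your proof is correct and takes essentially the same route as the paper: the same substitution $x=e^{\alpha t}$, the same binomial expansions with exact cancellation of the leading $x^{2HK}$ terms, and the same identification of the two surviving negative exponents $HK-2H$ and $HK-1$. The only difference is cosmetic—you already pin down the explicit rate $c(H,K,\alpha)=\alpha\min\{2H-HK,\,1-HK\}$ inside the lemma, which the paper defers to Theorem~\ref{thm:ergodic_bifbm}.
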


\begin{proof}
Set $x=e^{\alpha t}$, so $x\to\infty$ as $t\to\infty$. From \eqref{eq:RBalph_explicit_final},
\[
R_{B,\alpha}(t)
= 2^{-K} x^{-HK}\bigl[(x^{2H}+1)^K-(x-1)^{2HK}\bigr].
\]
We expand both terms. First,
\[
(x^{2H}+1)^K
= x^{2HK}(1+x^{-2H})^K
= x^{2HK}+Kx^{2HK-2H}+O(x^{2HK-4H}),
\]
while
\[
(x-1)^{2HK}
= x^{2HK}(1-x^{-1})^{2HK}
= x^{2HK}-2HK x^{2HK-1}+O(x^{2HK-2}).
\]
Subtracting,
\[
(x^{2H}+1)^K-(x-1)^{2HK}
=Kx^{2HK-2H}+2HK x^{2HK-1}+O(x^{2HK-4H})+O(x^{2HK-2}).
\]
Therefore
\[
R_{B,\alpha}(t)
= 2^{-K}\Bigl(K x^{HK-2H} + 2HK x^{HK-1} + \text{lower-order terms}\Bigr).
\]
Since $H\in(0,1)$, $K\in(0,1]$, we have $HK<1$ and $2H>HK$, so the exponents $HK-2H=-(2H-HK)<0$ and $HK-1=-(1-HK)<0$. Hence there exists $c(H,K)>0$ such that $|R_{B,\alpha}(t)|\le C x^{-c(H,K)} = C e^{-c(H,K)\alpha t}$ for large $t$, which is \eqref{eq:RBalpha_asymp}. Evenness of $R_{B,\alpha}$ extends the bound to $t\to-\infty$.
\end{proof}

The exponent $c(H,K,\alpha)=\alpha\,c(H,K)$ appearing above is strictly positive and encodes how the bi-fractional parameter $K$ slows down or accelerates the decay of correlations relative to the sub-fractional case. In particular, $c(H,K)$ interpolates between different regimes as $K$ varies, and this interpolation is reflected directly in the mixing rate.\\

\begin{remark}
In contrast with the sub-fractional case, where the single exponent
$\alpha H$ fully determines the rate of exponential covariance decay, the
bi-fractional case exhibits two competing relaxation mechanisms originating
from different components of its covariance. These appear in the exponents
$2H - HK$ and $1 - HK$, and the slower of the two dictates the overall mixing
rate. Consequently, the parameter $K$ does not merely deform the covariance but
substantially alters the long range dependence, giving rise to a genuinely
two parameter mixing structure that differs qualitatively from the
sub-fractional case.
\end{remark}

\begin{theorem}
\label{thm:ergodic_bifbm}
For each $H\in(0,1)$, $K\in(0,1]$ and $\alpha>0$, the process $B_{H,K}^{\mathrm{LT}}$ is an ergodic and strongly mixing stationary Gaussian process. Moreover, its $\alpha$ mixing coefficient
\[
\alpha_B(h)
:=\sup\Bigl\{\bigl|\mathbb{P}(A\cap B)-\mathbb{P}(A)\mathbb{P}(B)\bigr|:
A\in\sigma\{B_{H,K}^{\mathrm{LT}}(t):t\le0\},\,B\in\sigma\{B_{H,K}^{\mathrm{LT}}(t):t\ge h\}\Bigr\}
\]
satisfies
\[
\alpha_B(h) \le
C e^{-\,\alpha \min\{2H-HK,\ 1-HK\}\,h},
\qquad h\to\infty,
\]
for some $C>0$. Hence the explicit mixing rate is
\[
\lambda_B(H,K,\alpha)
= \alpha \min\{\,2H-HK,\ 1-HK\,\}.
\]
\end{theorem}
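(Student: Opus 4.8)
The plan is to follow the template already used for the sub-fractional case in Theorem~\ref{thm:ergodic_scaled_subfbm}, since $B_{H,K}^{\mathrm{LT}}$ is again a centred stationary Gaussian process by Theorem~\ref{thm:stationary_bifbm} and its autocovariance $R_{B,\alpha}$ is known in closed form and decays exponentially. First I would dispose of ergodicity: Lemma~\ref{lem:asymp_bifbm} gives $R_{B,\alpha}(t)\to 0$ as $|t|\to\infty$, and for a stationary Gaussian process the vanishing of the autocovariance at infinity is equivalent to ergodicity through the spectral criterion. I would cite \cite[Ch.~5]{IbragimovRozanov1978} and \cite{Maruyama1970} exactly as in the sub-fractional proof.

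The second step converts the asymptotic expansion obtained in the proof of Lemma~\ref{lem:asymp_bifbm} into a global exponential bound. Writing $x=e^{\alpha t}$, that expansion shows that $2^{-K}x^{-HK}\bigl[(x^{2H}+1)^K-(x-1)^{2HK}\bigr]$ is a sum of two leading contributions of orders $x^{-(2H-HK)}$ and $x^{-(1-HK)}$, together with strictly faster-decaying remainders. Reverting to $t$ and using the evenness of $R_{B,\alpha}$ from Theorem~\ref{thm:cov_bifbm_autocov}, together with the continuity (hence boundedness on compacts) of $R_{B,\alpha}$, I would extract a constant $C_1>0$ with
\[
|R_{B,\alpha}(t)|\le C_1\,e^{-\alpha\min\{2H-HK,\,1-HK\}\,|t|},\qquad t\in\R.
\]

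The third step applies the Ibragimov--Rozanov inequality \cite[Theorem~17.2.3]{IbragimovRozanov1978}, which for a stationary Gaussian process bounds $\alpha_B(h)$ by a constant multiple $C_2$ of the tail integral $\int_{|t|\ge h}|R_{B,\alpha}(t)|\,dt$. Integrating the exponential bound over $[h,\infty)$ then yields
\[
\alpha_B(h)\le\frac{C_1C_2}{\alpha\min\{2H-HK,\,1-HK\}}\,e^{-\alpha\min\{2H-HK,\,1-HK\}\,h},
\]
which is the asserted estimate and identifies $\lambda_B(H,K,\alpha)=\alpha\min\{2H-HK,\,1-HK\}$.

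The technically delicate point, and the only genuine departure from the sub-fractional case, is the competition between the two exponents. I would verify explicitly that both are strictly positive on the full parameter range: since $K\le1$ we have $1-HK\ge 1-H>0$, and since $H,K>0$ we have $2H-HK=H(2-K)>0$. This guarantees that neither leading term degenerates and, crucially, that $R_{B,\alpha}\in L^1(\R)$, so the tail integral in the Ibragimov--Rozanov bound converges. I would also note that the sign of $2H-1$ decides which exponent is the smaller, that is, whether the $x^{-(1-HK)}$ or the $x^{-(2H-HK)}$ term controls the decay; in either case the minimum governs the mixing rate, which is precisely the genuinely two-parameter behaviour absent in the sub-fractional setting.
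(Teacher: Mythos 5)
Your proposal is correct and follows essentially the same route as the paper: ergodicity via the Gaussian spectral criterion of Maruyama and Ibragimov--Rozanov, the global exponential bound $|R_{B,\alpha}(t)|\le C_1 e^{-\alpha\min\{2H-HK,\,1-HK\}|t|}$ extracted from the two-term expansion of the explicit covariance (the paper re-derives this expansion inside the theorem's proof rather than citing Lemma~\ref{lem:asymp_bifbm}, a cosmetic difference), and the Ibragimov--Rozanov tail-integral inequality to conclude. Your explicit checks that $2H-HK=H(2-K)>0$ and $1-HK>0$ on the full parameter range, and that the sign of $2H-1$ decides which exponent attains the minimum, are slightly more careful than what the paper writes out, and are correct.
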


\begin{proof}
Let $R_{B,\alpha}$ be as above. By Lemma~\ref{lem:asymp_bifbm}, \eqref{eq:RBalpha_asymp} holds, and hence $\lim_{|t|\to\infty}R_{B,\alpha}(t)=0$. For stationary Gaussian processes this is equivalent to ergodicity \cite{Maruyama1970,IbragimovRozanov1978}.

For strong mixing, define the coefficients $\alpha_B(h)$ as in the statement. As in the proof of Theorem~\ref{thm:ergodic_scaled_subfbm}, \cite{IbragimovRozanov1978} yields
\[
\alpha_B(h)\le C\int_{|t|\ge h} |R_{B,\alpha}(t)|\,dt,
\]
for some $C>0$. To identify the decay rate, we use the explicit covariance representation
\[
R_{B,\alpha}(t)
= 2^{-K} e^{-\alpha HK t}
\big[(e^{2\alpha H t}+1)^K - (e^{\alpha t}-1)^{2HK}\big].
\]
For large $t$, expanding both terms gives
\[
(e^{2\alpha H t}+1)^K
= e^{2\alpha HK t}\Bigl(1 + K e^{-2\alpha H t} + O(e^{-4\alpha H t})\Bigr),
\]
\[
(e^{\alpha t}-1)^{2HK}
= e^{2\alpha HK t}\Bigl(1 - 2HK e^{-\alpha t} + O(e^{-2\alpha t})\Bigr).
\]
Subtracting,
\[
(e^{2\alpha H t}+1)^K - (e^{\alpha t}-1)^{2HK}
= K e^{2\alpha HK t-2\alpha H t}
+ 2HK e^{2\alpha HK t-\alpha t}
+ O(e^{2\alpha HK t-4\alpha H t}).
\]
Multiplying by $e^{-\alpha HK t}$ gives
\[
R_{B,\alpha}(t)
= C_1 e^{-\alpha(2H-HK)t}
+ C_2 e^{-\alpha(1-HK)t}
+ \text{faster decaying terms},
\]
for suitable constants $C_1,C_2$. The slowest decay corresponds to the minimum exponent, so there exists $C_3>0$ with
\[
|R_{B,\alpha}(t)|
\le C_3 e^{-\,\alpha \min\{2H-HK,\ 1-HK\}\,|t|}.
\]
Consequently,
\[
\alpha_B(h)
\le C \int_{|t|\ge h} |R_{B,\alpha}(t)|\,dt
\le C' e^{-\,\alpha \min\{2H-HK,\ 1-HK\}\,h},
\]
for some $C'>0$, which gives the stated bound and rate.
\end{proof}

Thus, in the bi-fractional case the mixing rate is governed by the smaller of the two exponents $2H-HK$ and $1-HK$, reflecting two competing relaxation mechanisms. In particular, for fixed $H$ the parameter $K$ can substantially slow down or accelerate mixing, a phenomenon that has no analogue in the one parameter sub-fractional setting.

\begin{corollary}
\label{cor:ergodic_avg_bifbm}
Let $f\in L^1(\mathbb{P})$. Then
\[
\frac{1}{T}\int_0^T f(B_{H,K}^{\mathrm{LT}}(s))\,ds
\;\xrightarrow[T\to\infty]{a.s.}\;
\E[f(B_{H,K}^{\mathrm{LT}}(0))].
\]
Using \eqref{inverse_lamperti_bifbm} with $X(t)=B_{H,K}(t^\alpha)$, we have
\[
B_{H,K}(t^\alpha)=t^{\alpha HK} B_{H,K}^{\mathrm{LT}}(\log t),\qquad t>0.
\]
In particular, for integers $k\ge 1$,
\[
\E\bigl[B_{H,K}(t^\alpha)^k\bigr]
=t^{k\alpha HK} \E\bigl[B_{H,K}^{\mathrm{LT}}(0)^k\bigr].
\]
Time averaged characteristic functions of $B_{H,K}^{\mathrm{LT}}(s)$ converge almost surely to the ensemble characteristic function of $B_{H,K}^{\mathrm{LT}}(0)$, from which the one-time law of $B_{H,K}(t^\alpha)$ can be reconstructed by self-similarity.
\end{corollary}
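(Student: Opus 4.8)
The plan is to deduce everything from the ergodicity and strong mixing of $B_{H,K}^{\mathrm{LT}}$ established in Theorem~\ref{thm:ergodic_bifbm}, combined with the explicit inverse Lamperti relation of Definition~\ref{def:lamperti_bifbm}. First I would record that, since $B_{H,K}^{\mathrm{LT}}$ is a stationary ergodic Gaussian process, the shift semigroup $\{\theta_s\}_{s\ge 0}$ acting on its canonical path space is a measure-preserving ergodic flow. Applying the continuous-time version of Birkhoff's pointwise ergodic theorem to the observable $s\mapsto f(B_{H,K}^{\mathrm{LT}}(s))$ for any $f$ with $\E|f(B_{H,K}^{\mathrm{LT}}(0))|<\infty$ then yields the first displayed almost-sure convergence, the limit being the spatial average $\E[f(B_{H,K}^{\mathrm{LT}}(0))]$. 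This is the only place the ergodic theorem is invoked; all subsequent assertions are specializations of it.

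Next I would verify the inverse relation. From \eqref{lamperti_general_bifbm} we have $B_{H,K}(e^{\alpha t})=e^{\alpha HK t}B_{H,K}^{\mathrm{LT}}(t)$; setting $e^{\alpha t}=s^\alpha$, i.e.\ $t=\log s$, and using $\log(s^\alpha)/\alpha=\log s$, gives $B_{H,K}(s^\alpha)=s^{\alpha HK}B_{H,K}^{\mathrm{LT}}(\log s)$, which is the stated formula. For the moment identities I would take $f(x)=x^k$; since $B_{H,K}^{\mathrm{LT}}(0)=B_{H,K}(1)$ is Gaussian, all its moments are finite, so $f(B_{H,K}^{\mathrm{LT}}(0))\in L^1(\mathbb{P})$ and the general convergence applies. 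Combining the inverse relation with the stationarity of $B_{H,K}^{\mathrm{LT}}$ — which makes the one-time law independent of the argument $\log t$ — gives $\E[B_{H,K}(t^\alpha)^k]=t^{k\alpha HK}\E[B_{H,K}^{\mathrm{LT}}(\log t)^k]=t^{k\alpha HK}\E[B_{H,K}^{\mathrm{LT}}(0)^k]$.

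For the distributional statement I would choose $f(x)=e^{iux}$, which is bounded and hence integrable; applying the ergodic theorem to its real and imaginary parts shows that the time-averaged characteristic functions $\frac{1}{T}\int_0^T e^{iuB_{H,K}^{\mathrm{LT}}(s)}\,ds$ converge almost surely to $\E[e^{iuB_{H,K}^{\mathrm{LT}}(0)}]$ for each $u\in\R$. Since the latter determines the one-time law of $B_{H,K}^{\mathrm{LT}}(0)$, and hence, via the inverse relation and the $HK$-self-similarity \eqref{selfsim_bifbm}, the law of $B_{H,K}(t^\alpha)$ for every $t>0$, the reconstruction claim follows.

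The routine content here is minimal; the only genuine technical point is justifying the continuous-time ergodic theorem, which requires the flow to be jointly measurable and the process to admit a measurable (indeed continuous) version so that the pathwise time integrals are well defined. I expect this measurability bookkeeping — rather than any substantive estimate — to be the main obstacle, and it is dispatched exactly as in the sub-fractional Corollary~\ref{cor:ergodic_avg_scaled_subfbm}: strong mixing from Theorem~\ref{thm:ergodic_bifbm} implies ergodicity of the flow, which is all that Birkhoff's theorem requires.
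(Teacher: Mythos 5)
Your proposal is correct and follows essentially the same route the paper intends: the corollary is a direct application of Birkhoff's ergodic theorem to the stationary ergodic process furnished by Theorem~\ref{thm:ergodic_bifbm}, combined with the inverse Lamperti relation and stationarity/self-similarity to transfer moments and the one-time law back to $B_{H,K}(t^\alpha)$. Your derivation of $B_{H,K}(t^\alpha)=t^{\alpha HK}B_{H,K}^{\mathrm{LT}}(\log t)$ directly from \eqref{lamperti_general_bifbm} is in fact the cleanest way to justify the formula as stated, and your choices $f(x)=x^k$ (integrable by Gaussianity) and $f(x)=e^{iux}$ (bounded) match the paper's sub-fractional template in Corollary~\ref{cor:ergodic_avg_scaled_subfbm}.
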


As in the sub-fractional case, this corollary shows that statistical information about the original bi-fractional process $B_{H,K}$ can be extracted from a single long trajectory of the stationary Lamperti transform. The dependence on $H$ and $K$ is entirely encoded in the stationary field $B_{H,K}^{\mathrm{LT}}$, which is often more tractable both analytically and numerically.\\

\begin{remark}
The competing exponents $2H-HK$ and $1-HK$ demonstrate that the decay of
the bi-fractional Lamperti covariance depends intrinsically on the interaction
between $H$ and $K$, yielding a richer mixing behaviour than in the
sub-fractional case.
\end{remark}

\section{Langevin approach scaled}
\label{sec:sub_bi_langevin}

In Section~\ref{sec:sub_bi} we showed that scaled $s-fBm$ and $bi-fBm$ admit stationary Lamperti transforms with explicit covariance and mixing properties. In many applications, however, one is interested not only in the raw Gaussian drivers but also in linear systems excited by them, such as generalized Langevin equations. A natural way to encode such dynamics is to consider stochastic integrals of power law type with respect to the underlying Gaussian fields. The resulting processes retain self-similar structure but carry additional temporal smoothing, which in turn modifies their correlation and mixing behaviour.

In this section we introduce and analyse two such constructions: a Langevin type process driven by $s-fBm$ and an analogous process driven by $bi-fBm$. In each case we first define the power law stochastic integral and establish its self-similarity. We then apply the scaled Lamperti map of Definition~\ref{def:lamperti_scaled} to obtain a stationary Gaussian representation, and finally derive covariance formulae and exponential mixing properties. Throughout, an overline will be used to distinguish the Langevin type objects from the Lamperti transforms of the underlying scaled processes studied in Section~\ref{sec:sub_bi}.

\subsection{Langevin scaled $s-fBm$}
\label{sec:sub_langevin}

We begin with the sub-fractional case. The driving noise is the $s-fBm$ $S_H$ of Definition~\ref{def:subfbm}, and we introduce a power law memory kernel in the time domain. This can be viewed as a toy model for a generalized Langevin equation with power law friction, where the kernel exponent is tuned to preserve self-similarity after integration.

Let $S_H$ be a $s-fBm$ with Hurst index $H\in(0,1)$ and covariance \eqref{eq:cov_subfbm}. Motivated by the fractional Brownian case (see, e.g., \cite{MetzlerKlafter2000,CherstvyMetzler2014,Magdziarz2020}), we consider the following power-law driven integral.

\begin{definition}
\label{def_Ybar_Langevin}
Fix $\alpha>0$ and $H\in(0,1)$. Assume that, for each $t\ge0$, the stochastic integral
\[
\int_0^t y^{H(\alpha-1)}\,dS_H(y)
\]
is well-defined (e.g.\ pathwise for $H>1/2$, or in the Skorokhod/divergence sense in a suitable stochastic calculus for $s-fBm$, cf.\ \cite{Bojdecki2004}). Define
\begin{equation}
\label{def_Ybar_L}
\overline{Y}(t):=c_{H,\alpha}\int_0^t y^{H(\alpha-1)}\,dS_H(y),\qquad t\ge0,
\end{equation}
where $c_{H,\alpha}>0$ is a normalisation constant. We call $\overline{Y}$ the \emph{Langevin-type scaled $s-fBm$}.
\end{definition}

The process $\overline{Y}$ is a linear functional of the Gaussian field $S_H$, hence it is itself Gaussian. The next lemma records this basic fact, which will be used repeatedly in what follows.

\begin{lemma}
\label{lem_gaussian_Ybar}
The process $\overline{Y}$ is a centred Gaussian process.
\end{lemma}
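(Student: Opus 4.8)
The plan is to show that $\overline{Y}(t)$ is Gaussian for each fixed $t$ and that its finite-dimensional distributions are jointly Gaussian, since $\overline{Y}$ is obtained from the centred Gaussian field $S_H$ by a deterministic linear operation (integration against a deterministic kernel). The cleanest route is to recall that, under the integrability assumptions in Definition~\ref{def_Ybar_Langevin}, the stochastic integral $\int_0^t y^{H(\alpha-1)}\,dS_H(y)$ lives in the Gaussian Hilbert space generated by $\{S_H(y):y\ge0\}$, i.e.\ the closed linear span (in $L^2(\mathbb{P})$) of the increments of $S_H$. Every element of this closure is a centred Gaussian random variable, being an $L^2$-limit of finite linear combinations of the centred jointly Gaussian increments $S_H(y_{j+1})-S_H(y_j)$.

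Concretely, I would first approximate the integral by Riemann--Stieltjes type sums $\sum_j (\xi_j)^{H(\alpha-1)}\bigl(S_H(y_{j+1})-S_H(y_j)\bigr)$ over partitions of $[0,t]$. Each such sum is a finite linear combination of jointly Gaussian variables, hence centred Gaussian, with mean zero by linearity of expectation and centredness of $S_H$. The defining property of the integral (convergence in $L^2(\mathbb{P})$, guaranteed by the stated well-definedness) then gives $\overline{Y}(t)$ as an $L^2$-limit of centred Gaussian variables. Since $L^2$-convergence preserves Gaussianity and preserves the zero mean, $\overline{Y}(t)$ is centred Gaussian. For the joint statement, any finite linear combination $\sum_i a_i\overline{Y}(t_i)$ is again an element of the same Gaussian Hilbert space, hence centred Gaussian; by the Cramér--Wold device this yields joint Gaussianity of $(\overline{Y}(t_1),\dots,\overline{Y}(t_n))$, and the constant prefactor $c_{H,\alpha}$ does not affect any of these conclusions.

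The main obstacle is not the Gaussianity argument itself, which is entirely standard, but ensuring that the approximation step is legitimate across the several admissible interpretations of the integral flagged in Definition~\ref{def_Ybar_Langevin} (pathwise for $H>1/2$, or Skorokhod/divergence sense otherwise). I would handle this by invoking the abstract fact that in any of these constructions the integral is defined as a limit (either almost sure or in $L^2$) of elements of the first Wiener chaos associated with $S_H$, and that this first chaos is a closed Gaussian subspace of $L^2(\mathbb{P})$ in the sense of \cite{Janson1997,Nualart2006}. For the divergence/Skorokhod interpretation this is precisely the statement that the Malliavin divergence of a deterministic integrand is a centred Gaussian element of the first chaos, so no chaos of order $\ge2$ appears. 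This reduces the lemma to the closedness of a Gaussian subspace under $L^2$-limits, which completes the proof without any further computation.
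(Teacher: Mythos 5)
Your proposal is correct and follows the same route as the paper's proof, which simply observes that $\overline{Y}(t)$ is a linear functional of the centred Gaussian field $S_H$ (hence Gaussian and centred, with joint Gaussianity by linearity). Your version is a rigorous elaboration of that one-line argument --- approximation by Riemann--Stieltjes sums, closedness of the first Wiener chaos under $L^2$-limits, and Cram\'er--Wold for the finite-dimensional distributions --- and in particular your remark that the Skorokhod/divergence integral of a deterministic integrand stays in the first chaos supplies exactly the detail the paper leaves implicit for $H\le 1/2$.
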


\begin{proof}
For any $t\ge0$, the integral in \eqref{def_Ybar_L} is a linear functional of the Gaussian process $S_H$, hence Gaussian. For any finite vector $(\overline{Y}(t_1),\dots,\overline{Y}(t_n))$ the components are jointly Gaussian by linearity. Since $S_H$ is centred and the integrand is deterministic, $\E[\overline{Y}(t)]=0$ for all $t\ge0$.
\end{proof}

A key point in our construction is that the choice of power $H(\alpha-1)$ in the kernel ensures that the integration does not destroy self-similarity, but merely shifts its index in a controlled way. The next result makes this precise.

\begin{theorem}
\label{thm_selfsim_Ybar}
The process $\overline{Y}$ is $\alpha H$-self-similar:
\[
\{\overline{Y}(ct)\}_{t\ge0}
\;\stackrel{d}{=}\;
\{c^{\alpha H}\overline{Y}(t)\}_{t\ge0},\qquad c>0.
\]
\end{theorem}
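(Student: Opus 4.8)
The plan is to deduce the self-similarity of $\overline{Y}$ directly from the $H$-self-similarity of the driving noise $S_H$ by means of a change of variables in the defining integral \eqref{def_Ybar_L}. Since $\overline{Y}$ is a centred Gaussian process by Lemma~\ref{lem_gaussian_Ybar}, and $c^{\alpha H}\overline{Y}$ is likewise centred Gaussian, it suffices to show that the two processes $\{\overline{Y}(ct)\}_{t\ge0}$ and $\{c^{\alpha H}\overline{Y}(t)\}_{t\ge0}$ have the same law. I would establish this as an identity of the whole process, exploiting that the self-similarity of $S_H$ recorded in Definition~\ref{def:subfbm} is a statement about all finite-dimensional distributions jointly.

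First I would fix $c>0$ and perform the substitution $y=cu$ in
\[
\overline{Y}(ct)=c_{H,\alpha}\int_0^{ct} y^{H(\alpha-1)}\,dS_H(y).
\]
Writing $y^{H(\alpha-1)}=(cu)^{H(\alpha-1)}=c^{H(\alpha-1)}u^{H(\alpha-1)}$ and recognising $dS_H(y)$ as the differential of the time-changed integrator $u\mapsto S_H(cu)$, this gives
\[
\overline{Y}(ct)=c^{H(\alpha-1)}\,c_{H,\alpha}\int_0^{t} u^{H(\alpha-1)}\,dS_H(cu).
\]
Next I would invoke the self-similarity of $S_H$: the integrator process $\{S_H(cu)\}_{u\ge0}$ has the same law as $\{c^{H}S_H(u)\}_{u\ge0}$. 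Since the stochastic integral is a measurable linear functional of the integrator path, applying it to these two equal-in-law integrators yields equal-in-law outputs, jointly in $t$; by linearity the factor $c^{H}$ can be pulled out, giving
\[
\int_0^{t} u^{H(\alpha-1)}\,dS_H(cu)\;\stackrel{d}{=}\;c^{H}\int_0^{t} u^{H(\alpha-1)}\,dS_H(u).
\]
Collecting the prefactors yields $\overline{Y}(ct)\stackrel{d}{=}c^{H(\alpha-1)+H}\overline{Y}(t)=c^{\alpha H}\overline{Y}(t)$, since $H(\alpha-1)+H=\alpha H$, which is precisely the claimed self-similarity.

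The main obstacle is to justify rigorously that the distributional identity for the integrator transfers to the integral: one must check that the map sending the driving path to $\overline{Y}$ is measurable and equivariant under the deterministic rescaling $S_H(\cdot)\mapsto c^{H}S_H(\cdot)$, and that the change-of-variables step $\int_0^{ct}f(y)\,dS_H(y)=\int_0^t f(cu)\,dS_H(cu)$ is valid. For $H>1/2$ this is a pathwise Young integral, for which both points are classical; in the Skorokhod/divergence formulation one instead appeals to the scaling covariance of the divergence operator. Alternatively, since $\overline{Y}$ is centred Gaussian, the entire claim can be reduced to the single covariance identity $\E[\overline{Y}(cs)\overline{Y}(ct)]=c^{2\alpha H}\,\E[\overline{Y}(s)\overline{Y}(t)]$ for all $s,t\ge0$, which may be verified directly from the bilinear form defining the integral against the covariance kernel \eqref{eq:cov_subfbm}, thereby bypassing the pathwise subtleties entirely.
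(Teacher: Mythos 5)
Your proposal is correct and follows essentially the same route as the paper's own proof: the substitution $y=cu$, the distributional identity $dS_H(cu)\stackrel{d}{=}c^H\,dS_H(u)$ coming from $H$-self-similarity, and the exponent count $H(\alpha-1)+H=\alpha H$. You are in fact more careful than the paper about why the equality in law of the integrators transfers to the integrals (and your Gaussian covariance reduction is a clean fallback), but the core argument is the same.
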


\begin{proof}
Let $c>0$, $t\ge0$. From \eqref{def_Ybar_L},
\[
\overline{Y}(ct)=c_{H,\alpha}\int_0^{ct} y^{H(\alpha-1)}\,dS_H(y).
\]
With the change of variables $y=cu$ and using $H$-self-similarity $S_H(cu)\stackrel{d}{=}c^H S_H(u)$, one obtains
\[
dS_H(cu)\stackrel{d}{=} c^H dS_H(u),
\]
and hence
\[
\overline{Y}(ct)
\;\stackrel{d}{=}\;
c_{H,\alpha} \int_0^t (cu)^{H(\alpha-1)}\,c^H dS_H(u)
= c^{H(\alpha-1)+H}\,c_{H,\alpha} \int_0^t u^{H(\alpha-1)}\,dS_H(u)
= c^{\alpha H}\overline{Y}(t).
\]
Finite dimensional distributions scale accordingly.
\end{proof}

We now transport this self-similar process into the stationary framework by means of the scaled Lamperti transformation of Definition~\ref{def:lamperti_scaled}. This will yield a stationary Gaussian process whose covariance and mixing properties can be analysed in parallel with those of Section~\ref{sec:sub_bi}.

\begin{definition}
\label{def_Lamperti_Ybar_L}
The Lamperti transform of $\overline{Y}$ is the process $\overline{S}_H^{\mathrm{LT}}=\{\overline{S}_H^{\mathrm{LT}}(t)\}_{t\in\R}$ defined by
\begin{equation}
\label{def_SbarLT_L}
\overline{S}_H^{\mathrm{LT}}(t)
:=e^{-\alpha H t}\overline{Y}(e^{\alpha t}),\qquad t\in\R.
\end{equation}
\end{definition}

The next theorem shows that $\overline{S}_H^{\mathrm{LT}}$ is a legitimate stationary Gaussian field, completely analogous to the Lamperti transforms studied in Section~\ref{sec:sub_bi}. In fact, stationarity is now immediate from the general Lamperti correspondence.

\begin{theorem}
\label{thm_stationary_SbarLT}
The process $\overline{S}_H^{\mathrm{LT}}$ is a centred stationary Gaussian process.
\end{theorem}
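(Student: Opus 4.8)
The plan is to prove the two assertions separately: that $\overline{S}_H^{\mathrm{LT}}$ is a centred Gaussian process, and that it is stationary. The first is essentially free. Since $\overline{S}_H^{\mathrm{LT}}(t) = e^{-\alpha H t}\,\overline{Y}(e^{\alpha t})$ is obtained from $\overline{Y}$ by evaluation at the deterministic time $e^{\alpha t}$ followed by multiplication by the deterministic factor $e^{-\alpha H t}$, any finite collection $(\overline{S}_H^{\mathrm{LT}}(t_1),\dots,\overline{S}_H^{\mathrm{LT}}(t_n))$ is a deterministic linear image of the jointly Gaussian vector $(\overline{Y}(e^{\alpha t_1}),\dots,\overline{Y}(e^{\alpha t_n}))$, hence Gaussian, and centredness is inherited directly from Lemma~\ref{lem_gaussian_Ybar}. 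So the whole content of the theorem is stationarity.

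For stationarity I would follow the template of Theorems~\ref{thm:stationary_scaled_subfbm} and~\ref{thm:stationary_bifbm}. Fix a shift $u\in\R$ and write $\overline{S}_H^{\mathrm{LT}}(t+u) = e^{-\alpha H t}\,e^{-\alpha H u}\,\overline{Y}\!\left(e^{\alpha u}e^{\alpha t}\right)$, isolating the deterministic prefactor. The $\alpha H$-self-similarity of $\overline{Y}$ established in Theorem~\ref{thm_selfsim_Ybar} is then applied with the scale factor tied to $u$, turning the rescaled argument into the factor that compensates the prefactor $e^{-\alpha H u}$ and returns $\overline{S}_H^{\mathrm{LT}}(t)$ in distribution. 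Because Theorem~\ref{thm_selfsim_Ybar} is a statement about the whole process, the same scaling acts simultaneously on any finite family of time points, so all finite-dimensional distributions are shift invariant and strict stationarity follows. A clean alternative, exploiting that the process is already known to be centred Gaussian, is to reduce stationarity to covariance stationarity: it suffices to show that $\E\!\left[\overline{S}_H^{\mathrm{LT}}(t+h)\,\overline{S}_H^{\mathrm{LT}}(s+h)\right]$ is independent of $h$, which follows from the covariance scaling $\Cov\!\left(\overline{Y}(ax),\overline{Y}(ay)\right)=a^{2\alpha H}\Cov\!\left(\overline{Y}(x),\overline{Y}(y)\right)$ implied by Theorem~\ref{thm_selfsim_Ybar}, combined with the explicit prefactors.

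The only delicate point—and hence the step I would check most carefully—is the exact cancellation of the exponential prefactors. Stationarity holds precisely because the self-similarity index of $\overline{Y}$ is matched to the exponent appearing in the Lamperti prefactor; this matching is exactly what the choice of kernel exponent $H(\alpha-1)$ in Definition~\ref{def_Ybar_Langevin} was designed to ensure, via Theorem~\ref{thm_selfsim_Ybar}. Once this bookkeeping of exponents is confirmed, no further analytic difficulty arises: the well-definedness of $\overline{Y}$ is already assumed in Definition~\ref{def_Ybar_Langevin}, and no integrability or convergence estimates beyond that are needed. In short, the theorem is an instance of the general Lamperti correspondence of Definition~\ref{def:lamperti_scaled}, and the proof amounts to verifying that $\overline{Y}$ feeds into that correspondence with the correct similarity exponent.
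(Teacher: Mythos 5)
Your proposal is correct and follows essentially the same route as the paper: centredness and Gaussianity from Lemma~\ref{lem_gaussian_Ybar} via linearity, and stationarity from the $\alpha H$-self-similarity of $\overline{Y}$ (Theorem~\ref{thm_selfsim_Ybar}) fed into the general Lamperti correspondence of Definition~\ref{def:lamperti_scaled}. The paper simply cites that correspondence rather than re-running the shift computation, but the content is identical.
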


\begin{proof}
By Lemma~\ref{lem_gaussian_Ybar}, $\overline{Y}$ is centred Gaussian, so $\overline{S}_H^{\mathrm{LT}}$ is also centred Gaussian as a linear transform. By Theorem~\ref{thm_selfsim_Ybar}, $\overline{Y}$ is $\alpha H$-self-similar, and Definition~\ref{def:lamperti_scaled} asserts that the Lamperti transform of any $\alpha H$-self-similar process is stationary. Applying this with $X=\overline{Y}$ yields the claim.
\end{proof}

To understand the temporal structure induced by the Langevin kernel, we next derive the covariance of $\overline{S}_H^{\mathrm{LT}}$. The representation will be expressed in terms of the mixed second derivatives of the $s-fBm$ covariance and will reveal an explicit cancellation between two long range kernels, one of which is inherited from $|x-y|^{2H-2}$ and the other from $(x+y)^{2H-2}$.

Define
\[
R_{\overline{S},\alpha}(t,s)
:=\E\bigl[\overline{S}_H^{\mathrm{LT}}(t)\,\overline{S}_H^{\mathrm{LT}}(s)\bigr],\qquad t,s\in\R.
\]

\begin{theorem}
\label{thm:cov_full_SbarLT_subfBm}
Let $\overline{Y}$ be given by \eqref{def_Ybar_L}, with $\beta=H(\alpha-1)$. Then
\begin{equation}
\label{cov_RLT_general_full}
R_{\overline{S},\alpha}(t,s)
= e^{-\alpha H(t+s)}\,\E\bigl[\overline{Y}(e^{\alpha t})\overline{Y}(e^{\alpha s})\bigr],
\end{equation}
and for all $u,v>0$,
\begin{equation}
\label{cov_Ybar_full}
\E[\overline{Y}(u)\overline{Y}(v)]
= c_{H,\alpha}^2 H(2H-1)\int_0^u\int_0^v x^\beta y^\beta\Bigl(|x-y|^{2H-2}-(x+y)^{2H-2}\Bigr)\,dx\,dy.
\end{equation}
Consequently,
\begin{equation}
\label{cov_RLT_explicit_full}
R_{\overline{S},\alpha}(t,s)
= c_{H,\alpha}^2 H(2H-1)e^{-\alpha H(t+s)}
\int_0^{e^{\alpha t}}\int_0^{e^{\alpha s}} x^\beta y^\beta\Bigl(|x-y|^{2H-2}-(x+y)^{2H-2}\Bigr)\,dx\,dy.
\end{equation}
\end{theorem}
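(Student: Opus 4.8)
The plan is to establish the three displayed identities in sequence, the first and last being essentially bookkeeping while the middle identity \eqref{cov_Ybar_full} carries all of the analytic content. Identity \eqref{cov_RLT_general_full} is immediate from Definition~\ref{def_Lamperti_Ybar_L}: the prefactors $e^{-\alpha H t}$ and $e^{-\alpha H s}$ are deterministic, so they factor out of the bilinear expectation to give $R_{\overline{S},\alpha}(t,s)=e^{-\alpha H(t+s)}\,\E[\overline{Y}(e^{\alpha t})\overline{Y}(e^{\alpha s})]$. Only linearity of the expectation is needed here; no self-similarity is invoked.

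The core is \eqref{cov_Ybar_full}. Here I would exploit the second-order (isometry-type) structure of Wiener integrals against a Gaussian process: for deterministic integrands $f,g$ and a centred Gaussian process $G$ with covariance $C(x,y)=\E[G(x)G(y)]$, one has $\E\bigl[\int_0^u f\,dG\,\int_0^v g\,dG\bigr]=\int_0^u\int_0^v f(x)g(y)\,\partial_x\partial_y C(x,y)\,dx\,dy$. The cleanest self-contained route is Riemann--Stieltjes approximation: replacing each integral by a sum $\sum_i f(x_i)\,\Delta G(x_i)$ turns the expectation into a double sum weighted by the second mixed difference of $C$, and letting the mesh vanish identifies the limit with the double integral of $\partial_x\partial_y C$; alternatively the identity may be quoted from the Gaussian Hilbert space framework of \cite{Janson1997,Nualart2006}. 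Taking $G=S_H$ and $f(x)=c_{H,\alpha}x^{\beta}$ with $\beta=H(\alpha-1)$, it remains to differentiate the covariance \eqref{eq:cov_subfbm}. The two pure terms $x^{2H}$ and $y^{2H}$ have vanishing mixed derivative; differentiating $-\tfrac12(x+y)^{2H}$ yields $-H(2H-1)(x+y)^{2H-2}$, and differentiating $-\tfrac12|x-y|^{2H}$ yields $+H(2H-1)|x-y|^{2H-2}$, the two sign flips from the chain rule on $|x-y|$ combining with the $-\tfrac12$. Hence $\partial_x\partial_y C=H(2H-1)\bigl(|x-y|^{2H-2}-(x+y)^{2H-2}\bigr)$, which gives \eqref{cov_Ybar_full} after extracting $c_{H,\alpha}^2$. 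Identity \eqref{cov_RLT_explicit_full} then follows by substituting \eqref{cov_Ybar_full} at $(u,v)=(e^{\alpha t},e^{\alpha s})$ into \eqref{cov_RLT_general_full}.

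The main obstacle is the rigorous status of the covariance identity and of the resulting kernel near the diagonal. The mixed derivative $|x-y|^{2H-2}$ is singular on $\{x=y\}$ and is absolutely integrable there only when $2H-2>-1$, i.e.\ $H>1/2$; in that regime the Riemann--Stieltjes argument, the interchange of differentiation and integration, and the absolute convergence of the double integral in \eqref{cov_Ybar_full} all go through directly, and I would carry out this case in full. For $H\le 1/2$ the pathwise interpretation of $\int_0^t y^{\beta}\,dS_H(y)$ fails and the integral must be read in the Skorokhod/divergence sense, as already flagged in Definition~\ref{def_Ybar_Langevin}; formula \eqref{cov_Ybar_full} persists, but the singular double integral is understood as a regularised (principal-value) object within the Gaussian Hilbert space of \cite{Janson1997,Nualart2006}. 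I would therefore treat the complementary range via the divergence-operator isometry, emphasising that the algebraic form of $\partial_x\partial_y C$ is identical in both cases.
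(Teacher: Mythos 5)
Your proposal follows essentially the same route as the paper's proof: factor out the deterministic Lamperti prefactors by linearity, reduce $\E[\overline{Y}(u)\overline{Y}(v)]$ via the Gaussian isometry (with integration by parts, citing \cite{Janson1997,Nualart2006}) to a double integral against $\partial_x\partial_y R_H$, compute that mixed derivative to be $H(2H-1)\bigl(|x-y|^{2H-2}-(x+y)^{2H-2}\bigr)$, and substitute $(u,v)=(e^{\alpha t},e^{\alpha s})$. If anything, your explicit treatment of the diagonal singularity and of the $H\le 1/2$ Skorokhod regime is more careful than the paper's proof, which leaves those integrability caveats implicit in Definition~\ref{def_Ybar_Langevin}.
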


\begin{proof}
The relation \eqref{cov_RLT_general_full} follows from \eqref{def_SbarLT_L} by linearity of expectation. To compute $\E[\overline{Y}(u)\overline{Y}(v)]$, write
\[
\overline{Y}(u)
= c_{H,\alpha}\int_0^u x^\beta\,dS_H(x),\qquad
\overline{Y}(v)
= c_{H,\alpha}\int_0^v y^\beta\,dS_H(y),
\]
so that
\[
\E[\overline{Y}(u)\overline{Y}(v)]
= c_{H,\alpha}^2\int_0^u\int_0^v x^\beta y^\beta\,dR_H(x,y),
\]
where $R_H(x,y)=\E[S_H(x)S_H(y)]$ is given by \eqref{eq:cov_subfbm}. A standard Gaussian isometry argument (cf.\ \cite{Janson1997,Nualart2006}) together with integration by parts in both variables shows that
\[
\int_0^u\int_0^v x^\beta y^\beta\,dR_H(x,y)
= \int_0^u\int_0^v x^\beta y^\beta \frac{\partial^2}{\partial x\partial y}R_H(x,y)\,dx\,dy,
\]
since $R_H(0,y)=R_H(x,0)=0$. A direct computation from \eqref{eq:cov_subfbm} yields
\[
\frac{\partial^2}{\partial x\partial y}R_H(x,y)
=H(2H-1)\Bigl(|x-y|^{2H-2}-(x+y)^{2H-2}\Bigr),
\]
and \eqref{cov_Ybar_full} follows. Substituting $u=e^{\alpha t}$, $v=e^{\alpha s}$ into \eqref{cov_Ybar_full} and combining with \eqref{cov_RLT_general_full} yields \eqref{cov_RLT_explicit_full}.
\end{proof}

The expression \eqref{cov_RLT_explicit_full} shows that, apart from the Lamperti factor $e^{-\alpha H(t+s)}$, the covariance is controlled by a double integral involving a difference of two power-law kernels. The negative sign in front of $(x+y)^{2H-2}$ is precisely what ensures integrability at infinity and, ultimately, the exponential decay of correlations in the Lamperti domain.

We now turn to ergodicity and mixing. As in Section~\ref{sec:sub_bi}, the key step is to obtain an explicit exponential bound on the covariance, from which Gaussian ergodic and mixing criteria can be applied.

\begin{theorem}
\label{thm_ergodic_SbarLT_L}
The process $\overline{S}_H^{\mathrm{LT}}$ is an ergodic and strongly mixing stationary Gaussian process, with exponentially decaying covariance and $\alpha$-mixing coefficients. More precisely, its $\alpha$-mixing coefficient
\[
\alpha_{\overline{S}}(h)
:=\sup\Bigl\{\bigl|\mathbb{P}(A\cap B)-\mathbb{P}(A)\mathbb{P}(B)\bigr|:
A\in\sigma\{\overline{S}_H^{\mathrm{LT}}(t):t\le0\},\,B\in\sigma\{\overline{S}_H^{\mathrm{LT}}(t):t\ge h\}\Bigr\}
\]
satisfies
\[
\alpha_{\overline{S}}(h)\le C e^{-\alpha H h},\qquad h\to\infty,
\]
for some constant $C>0$. Thus the explicit exponential mixing rate is
\[
\lambda_{\overline{S}}(H,\alpha)=\alpha H.
\]
\end{theorem}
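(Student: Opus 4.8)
The plan is to reproduce, for $\overline{S}_H^{\mathrm{LT}}$, the two-stage strategy already used for Theorem~\ref{thm:ergodic_scaled_subfbm}: first reduce ergodicity and strong mixing to a single quantitative decay estimate on the autocovariance, and then extract that estimate from the explicit integral representation of Theorem~\ref{thm:cov_full_SbarLT_subfBm}. Since $\overline{S}_H^{\mathrm{LT}}$ is a centred stationary Gaussian process by Theorem~\ref{thm_stationary_SbarLT}, its autocovariance is a function of one variable, so I would set $g(\tau):=R_{\overline{S},\alpha}(\tau,0)$ and specialise \eqref{cov_RLT_explicit_full} at $s=0$ to obtain
\[
g(\tau)=c_{H,\alpha}^2\,H(2H-1)\,e^{-\alpha H\tau}\,J(e^{\alpha\tau}),
\qquad
J(u):=\int_0^u\!\!\int_0^1 x^\beta y^\beta\Bigl(|x-y|^{2H-2}-(x+y)^{2H-2}\Bigr)\,dy\,dx,
\]
with $\beta=H(\alpha-1)$. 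The entire problem then reduces to understanding the large-$u$ behaviour of $J(u)$.

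The first substantive step, an analogue of Lemma~\ref{lem:asymp_subfbm_scaled}, is to show that $g$ decays exponentially with principal rate $\alpha H$, and here the cancellation between the two kernels is decisive. For $y\in(0,1)$ fixed and $x\to\infty$, a Taylor expansion gives $|x-y|^{2H-2}-(x+y)^{2H-2}=(4-4H)\,y\,x^{2H-3}+O(x^{2H-5})$, so the difference decays one full power of $x$ faster than either kernel alone. Integrating the inner variable against $y^{\beta}$ and then over $x$, the exponent controlling integrability at infinity is $\beta+2H-3=H(\alpha+1)-3$, and the cancellation is precisely what can push this below $-1$ so that $J(u)$ converges to a finite limit $J(\infty)$. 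Granting convergence, $g(\tau)=c_{H,\alpha}^2 H(2H-1)J(\infty)\,e^{-\alpha H\tau}+(\text{faster-decaying terms})$, which by evenness yields a global bound $|g(\tau)|\le C_1 e^{-\alpha H|\tau|}$.

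With the exponential covariance bound in hand, the closing steps mirror the sub-fractional argument. Ergodicity follows from $g(\tau)\to 0$ as $|\tau|\to\infty$, which is equivalent to ergodicity for a stationary Gaussian process \cite{Maruyama1970,IbragimovRozanov1978}. For strong mixing I would invoke the Ibragimov--Rozanov inequality to bound the $\alpha$-mixing coefficient by $\alpha_{\overline{S}}(h)\le C_2\int_{|t|\ge h}|g(t)|\,dt$ and then integrate:
\[
\alpha_{\overline{S}}(h)\le C_2\int_h^\infty C_1 e^{-\alpha H t}\,dt=\frac{C_1 C_2}{\alpha H}\,e^{-\alpha H h},
\]
which gives $\alpha_{\overline{S}}(h)\le Ce^{-\alpha H h}$ and identifies the rate as $\lambda_{\overline{S}}(H,\alpha)=\alpha H$.

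The main obstacle is the asymptotic control of $J(u)$. Two points need genuine care. First, the difference kernel is integrable at infinity only when $H(\alpha+1)<2$ (equivalently $\alpha<2/H-1$); inside this range $J(u)\to J(\infty)$ and the net decay is at the full Lamperti rate $\alpha H$, whereas outside it $J(u)$ grows polynomially and would contaminate the exponent, so I expect the clean rate $\alpha H$ to rest on an implicit parameter condition that should be stated. Second, the integrand is singular on the diagonal $x=y$ when $H<1/2$ (there $2H-2<-1$), so the formal second-derivative identity behind \eqref{cov_Ybar_full} must be reconciled with the Skorokhod/divergence interpretation flagged in Definition~\ref{def_Ybar_Langevin}; for $H>1/2$ the kernel is locally integrable and $H(2H-1)>0$, so I would carry out the decay estimate cleanly in that regime and verify that the cancellation simultaneously controls the near-diagonal and the large-$x$ contributions.
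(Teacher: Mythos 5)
Your proposal follows the paper's proof step for step in its overall architecture: specialise the covariance representation \eqref{cov_RLT_explicit_full} at $s=0$, extract an exponential bound on the autocovariance, deduce ergodicity from the vanishing-covariance criterion for stationary Gaussian processes \cite{Maruyama1970,IbragimovRozanov1978}, and obtain the mixing bound by inserting the exponential estimate into the Ibragimov--Rozanov inequality and integrating. The difference lies in the one step that carries all the content, and there your version is the sound one. The paper's proof asserts that the double integral is $O(e^{p\alpha t})$, that ``the cancellation between the two kernels ensures that one can choose $p<H$'', concludes $R_{\overline{S},\alpha}(t)=O\bigl(e^{-(\alpha H-p\alpha)t}\bigr)$, and then writes down the bound $|R_{\overline{S},\alpha}(t)|\le C_1 e^{-\alpha H|t|}$; that last implication is a non sequitur, since decay at rate $(H-p)\alpha<\alpha H$ does not yield decay at rate $\alpha H$. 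Your expansion $|x-y|^{2H-2}-(x+y)^{2H-2}=(4-4H)\,y\,x^{2H-3}+O(x^{2H-5})$ supplies exactly what is missing: the rate is $\alpha H$ precisely when your $J(u)$ converges, i.e.\ when $\beta+2H-3<-1$, equivalently $H(\alpha+1)<2$; when $H(\alpha+1)\ge 2$ one gets $J(u)\asymp u^{H\alpha+H-2}$, the covariance decays only at the slower rate $\alpha(2-H\alpha)$, and the unqualified claim $\lambda_{\overline{S}}(H,\alpha)=\alpha H$ is not supported by the paper's own computation. So the parameter restriction you flag is not a technicality but a missing hypothesis of the theorem. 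Your second caveat is equally real: \eqref{cov_Ybar_full} is the classical $H>1/2$ kernel identity; for $H<1/2$ the factor $H(2H-1)$ is negative and $|x-y|^{2H-2}$ fails to be locally integrable near the diagonal, so the covariance representation, and with it the whole estimate, must be re-derived in the Skorokhod/divergence framework — another restriction the paper passes over silently.

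One further defect, which your proposal inherits from the paper rather than introduces: the opening reduction ``$\overline{S}_H^{\mathrm{LT}}$ is stationary by Theorem~\ref{thm_stationary_SbarLT}, so the autocovariance is a function of one variable'' is not actually available. Since $\overline{Y}$ is $\alpha H$-self-similar (Theorem~\ref{thm_selfsim_Ybar}), self-similarity gives
\[
\E\bigl[\overline{S}_H^{\mathrm{LT}}(t)^2\bigr]
= e^{-2\alpha H t}\,\E\bigl[\overline{Y}(e^{\alpha t})^2\bigr]
= e^{2\alpha H(\alpha-1)t}\,\E\bigl[\overline{Y}(1)^2\bigr],
\]
which is non-constant whenever $\alpha\neq1$: the map $t\mapsto e^{-\alpha Ht}\overline{Y}(e^{\alpha t})$ is not the stationarising transform of an $\alpha H$-self-similar process (that would be $t\mapsto e^{-\alpha Ht}\overline{Y}(e^{t})$, as a variance check confirms). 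This is an error in the stationarity claim of Definition~\ref{def:lamperti_scaled}, propagated to Theorem~\ref{thm_stationary_SbarLT}, so it is the paper's problem and not a flaw in your reasoning relative to the paper; but it means that for $\alpha\neq1$ the quantity both you and the paper call the autocovariance is only the slice $R_{\overline{S},\alpha}(t,0)$ of a genuinely two-variable covariance. If the transform is corrected to $e^{-\alpha Ht}\overline{Y}(e^{t})$, your analysis goes through verbatim and yields the rate $\min\{\alpha H,\,2-H\}$, which equals $\alpha H$ exactly on the region $H(\alpha+1)\le 2$ that you singled out.
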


\begin{proof}
Write $R_{\overline{S},\alpha}(t):=R_{\overline{S},\alpha}(t,0)$. From \eqref{cov_RLT_explicit_full} with $s=0$ we obtain
\[
R_{\overline{S},\alpha}(t)
= c_{H,\alpha}^2 H(2H-1)e^{-\alpha H t}
\int_0^{e^{\alpha t}}\int_0^{1} x^\beta y^\beta\Bigl(|x-y|^{2H-2}-(x+y)^{2H-2}\Bigr)\,dx\,dy,
\]
since $\overline{S}_H^{\mathrm{LT}}(0)=\overline{Y}(1)$. The inner integral is over $y\in[0,1]$, and the kernels behave like negative powers of $x$ as $x\to\infty$:
\[
|x-y|^{2H-2}\asymp x^{2H-2},\qquad (x+y)^{2H-2}\asymp x^{2H-2}.
\]
Because $2H-2<0$ and $y$ is bounded, the double integral grows at most polynomially in $e^{\alpha t}$; more precisely, there exists $p>0$ such that
\[
\int_0^{e^{\alpha t}}\int_0^{1} x^\beta y^\beta\Bigl(|x-y|^{2H-2}-(x+y)^{2H-2}\Bigr)\,dx\,dy
=O(e^{p\alpha t}),\qquad t\to\infty.
\]
The cancellation between the two kernels ensures that one can choose $p<H$, so that $\alpha H-p\alpha>0$. Consequently,
\[
R_{\overline{S},\alpha}(t)
=O\bigl(e^{-(\alpha H-p\alpha)t}\bigr),
\]
and in particular there is $C_1>0$ such that
\[
|R_{\overline{S},\alpha}(t)|\le C_1 e^{-\alpha H|t|},\qquad t\in\R.
\]
This shows that $\lim_{|t|\to\infty}R_{\overline{S},\alpha}(t)=0$ with exponential rate. For a stationary Gaussian process this implies ergodicity; see \cite{Maruyama1970,IbragimovRozanov1978}.

For strong mixing and the explicit rate, we invoke again the Gaussian mixing inequality: there exists $C_2>0$ such that
\[
\alpha_{\overline{S}}(h)
\le C_2\int_{|t|\ge h} |R_{\overline{S},\alpha}(t)|\,dt.
\]
Using the bound just obtained,
\[
\alpha_{\overline{S}}(h)
\le C_2\int_{h}^{\infty} C_1 e^{-\alpha H t}\,dt
= \frac{C_1C_2}{\alpha H} e^{-\alpha H h}.
\]
Thus $\overline{S}_H^{\mathrm{LT}}$ is strongly mixing with explicit exponential rate $\lambda_{\overline{S}}(H,\alpha)=\alpha H$.
\end{proof}

The previous theorem shows that the Langevin smoothing does not weaken the exponential mixing inherited from the Lamperti transform of the underlying scaled $s-fBm$: the rate is again determined solely by the product $\alpha H$. From a modelling perspective, this means that the long time decorrelation of the Langevin-type dynamics is controlled by the same effective exponent as that of the driver.

\begin{corollary}
\label{cor_ergodic_averages_L}
Let $f\in L^1(\mathbb{P})$. Then
\[
\frac{1}{T}\int_0^T f(\overline{S}_H^{\mathrm{LT}}(s))\,ds
\;\xrightarrow[T\to\infty]{a.s.}\;
\E\bigl[f(\overline{S}_H^{\mathrm{LT}}(0))\bigr].
\]
Moreover, by $\alpha H$-self-similarity of $\overline{Y}$ we have
\[
\E[\overline{Y}(t)^k]
= t^{k\alpha H} \E\bigl[(\overline{S}_H^{\mathrm{LT}}(0))^k\bigr],
\qquad t>0,\ k\ge1.
\]
Thus moments and the one-time distribution of $\overline{Y}(t)$ can be inferred from single-trajectory time averages of $\overline{S}_H^{\mathrm{LT}}$.
\end{corollary}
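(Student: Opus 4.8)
The plan is to treat the two assertions separately and then splice them together, exactly as in the proofs of Corollary~\ref{cor:ergodic_avg_scaled_subfbm} and Corollary~\ref{cor:ergodic_avg_bifbm}. The first assertion is a pointwise ergodic statement for $\overline{S}_H^{\mathrm{LT}}$, the second is a pure rescaling identity for moments, and the reconstruction claim is then immediate from combining the two by specialising $f$.

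For the ergodic average I would first invoke Theorem~\ref{thm_ergodic_SbarLT_L}, which guarantees that $\overline{S}_H^{\mathrm{LT}}$ is stationary and ergodic. Realising the process on canonical path space $\Omega=\R^{\R}$ with its law $\mathbb{P}$, the time-shift flow $(\theta_s)_{s\in\R}$, $(\theta_s\omega)(\cdot)=\omega(\cdot+s)$, is measure preserving, and ergodicity of this flow is precisely the ergodicity established in Theorem~\ref{thm_ergodic_SbarLT_L}. Writing $g(\omega):=f(\omega(0))$, the hypothesis $f\in L^1(\mathbb{P})$ is to be read as $\E\bigl[\,|f(\overline{S}_H^{\mathrm{LT}}(0))|\,\bigr]<\infty$, i.e.\ $g\in L^1(\Omega,\mathbb{P})$. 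The continuous-time Birkhoff ergodic theorem then yields
\[
\frac{1}{T}\int_0^T g(\theta_s\omega)\,ds
=\frac{1}{T}\int_0^T f(\overline{S}_H^{\mathrm{LT}}(s))\,ds
\;\xrightarrow[T\to\infty]{a.s.}\;
\E[g]=\E\bigl[f(\overline{S}_H^{\mathrm{LT}}(0))\bigr],
\]
which is the first display. The only point requiring care is joint measurability of $(s,\omega)\mapsto f(\overline{S}_H^{\mathrm{LT}}(s))$ so that the path integral is well defined; this follows from the existence of a jointly measurable (indeed almost surely continuous) version of $\overline{S}_H^{\mathrm{LT}}$, guaranteed by continuity of its covariance \eqref{cov_RLT_explicit_full}.

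For the moment identity I would use only Theorem~\ref{thm_selfsim_Ybar}. Fixing $t>0$ and applying $\alpha H$-self-similarity with scaling factor $c=t$, evaluated at the single time $1$, gives the one-dimensional distributional equality $\overline{Y}(t)\stackrel{d}{=}t^{\alpha H}\overline{Y}(1)$. By \eqref{def_SbarLT_L} we have $\overline{S}_H^{\mathrm{LT}}(0)=\overline{Y}(1)$, so $\overline{Y}(t)\stackrel{d}{=}t^{\alpha H}\,\overline{S}_H^{\mathrm{LT}}(0)$. Taking $k$-th moments of both sides—finite because $\overline{Y}$ is Gaussian by Lemma~\ref{lem_gaussian_Ybar}, hence all polynomial moments of its marginals exist—produces $\E[\overline{Y}(t)^k]=t^{k\alpha H}\,\E[(\overline{S}_H^{\mathrm{LT}}(0))^k]$, the second display. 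The reconstruction statement then follows by specialising $f$: choosing $f(x)=x^k$ in the ergodic limit computes $\E[(\overline{S}_H^{\mathrm{LT}}(0))^k]$ from a single trajectory, and the rescaling identity recovers $\E[\overline{Y}(t)^k]$; choosing $f(x)=e^{iux}$ (applied to real and imaginary parts, both bounded and hence integrable) recovers the characteristic function of $\overline{S}_H^{\mathrm{LT}}(0)$, and the distributional identity $\overline{Y}(t)\stackrel{d}{=}t^{\alpha H}\overline{S}_H^{\mathrm{LT}}(0)$ then determines the full one-time law of $\overline{Y}(t)$.

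I do not expect a genuine obstacle here, since the argument is a verbatim repetition of the two earlier corollaries with $\overline{S}_H^{\mathrm{LT}}$ in place of $S_H^{\mathrm{LT}}$. The only item specific to the Langevin construction is the finiteness of all polynomial moments of $\overline{Y}(t)$; this is automatic from Gaussianity once the defining stochastic integral \eqref{def_Ybar_L} is well defined with finite variance, an assumption already built into Definition~\ref{def_Ybar_Langevin}. The mildest point of care remains the measurability and integrability hypotheses for the continuous-time ergodic theorem, which are standard for an almost surely continuous stationary Gaussian process.
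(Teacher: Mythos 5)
Your proposal is correct and follows essentially the same route the paper intends: the paper states this corollary without a separate proof, as an immediate consequence of the ergodicity in Theorem~\ref{thm_ergodic_SbarLT_L} (via Birkhoff's theorem) together with the $\alpha H$-self-similarity of $\overline{Y}$ from Theorem~\ref{thm_selfsim_Ybar} and the identification $\overline{S}_H^{\mathrm{LT}}(0)=\overline{Y}(1)$, exactly as you argue. Your added remarks on joint measurability and the finiteness of Gaussian moments are standard care points that only strengthen the argument.
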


This corollary mirrors the reconstruction results of Section~\ref{sec:sub_bi}: the entire one-time marginal family of the non-stationary Langevin type process $\overline{Y}$ can be recovered from the stationary Lamperti field, which is far more amenable to ergodic sampling and numerical approximation.

\subsection{Langevin scaled $bi-fBm$}
\label{sec:bi_langevin}

We now construct the bi-fractional analogue of the Langevin equation and analyse its Lamperti transform. The additional parameter $K$ in $bi-fBm$ enriches the covariance structure and leads to more varied mixing behaviour. As before, we retain an overline notation for the Langevin based transform to distinguish it from the direct Lamperti transform $B_{H,K}^{\mathrm{LT}}$ considered in Section~\ref{sec:bi}.

In this setting, the power law kernel is adapted to the effective scaling index $HK$ of the $bi-fBm$ driver, and the resulting process is shown to be $\alpha HK$-self-similar. The Lamperti transform then produces a stationary Gaussian field whose covariance is given by a combination of two distinct kernels arising from the decomposition of the $bi-fBm$ covariance.

Let $B_{H,K}$ be a bi-fBm with covariance \eqref{cov_bifbm}. Set
\[
\beta_{H,K,\alpha}:=HK(\alpha-1).
\]

\begin{definition}
\label{def_Ybar_bifBm_Langevin}
Fix $\alpha>0$, $H\in(0,1)$, $K\in(0,1]$. Assume that, for each $t\ge0$, the stochastic integral
\[
\int_0^t y^{\beta_{H,K,\alpha}}\,dB_{H,K}(y)
\]
is well defined (e.g.\ as a pathwise Riemann-Stieltjes or divergence integral). Define
\begin{equation}
\label{def_Ybar_bifBm_L}
\overline{Y}_{H,K}(t)
:= c_{H,K,\alpha}\int_0^t y^{\beta_{H,K,\alpha}}\,dB_{H,K}(y),\qquad t\ge0,
\end{equation}
with $c_{H,K,\alpha}>0$ a normalisation constant. We call $\overline{Y}_{H,K}$ the Langevin type scaled $bi-fBm$.
\end{definition}

As in the sub-fractional case, $\overline{Y}_{H,K}$ is a linear functional of the underlying Gaussian field and hence Gaussian and centred.

\begin{lemma}
\label{lem_gaussian_Ybar_bifBm}
The process $\overline{Y}_{H,K}$ is a centred Gaussian process.
\end{lemma}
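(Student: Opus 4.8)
The statement to prove is Lemma~\ref{lem_gaussian_Ybar_bifBm}, asserting that $\overline{Y}_{H,K}$ is a centred Gaussian process.

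This is the direct analogue of Lemma~\ref{lem_gaussian_Ybar} for the sub-fractional case, and the proof strategy is identical. The plan is to argue first that each random variable $\overline{Y}_{H,K}(t)$ is Gaussian, then that finite-dimensional marginals are jointly Gaussian, and finally that the mean vanishes.

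First I would observe that for each fixed $t\ge0$, the integral in \eqref{def_Ybar_bifBm_L} is a stochastic integral of the \emph{deterministic} kernel $y\mapsto c_{H,K,\alpha}\,y^{\beta_{H,K,\alpha}}$ against the centred Gaussian process $B_{H,K}$. Such an integral — whether interpreted pathwise (Riemann--Stieltjes) or in the divergence/Skorokhod sense, as posited in Definition~\ref{def_Ybar_bifBm_Langevin} — lies in the first Wiener chaos generated by $B_{H,K}$, i.e.\ the $L^2$-closure of linear combinations of increments $B_{H,K}(b)-B_{H,K}(a)$. Since that closed linear span consists entirely of centred Gaussian random variables, each $\overline{Y}_{H,K}(t)$ is centred Gaussian.

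Next I would verify joint Gaussianity of an arbitrary finite vector $(\overline{Y}_{H,K}(t_1),\dots,\overline{Y}_{H,K}(t_n))$. By linearity of the integral, every linear combination $\sum_{j=1}^n a_j\,\overline{Y}_{H,K}(t_j)$ can be written as a single stochastic integral $c_{H,K,\alpha}\int_0^\infty \bigl(\sum_j a_j\,\mathbf{1}_{[0,t_j]}(y)\bigr)y^{\beta_{H,K,\alpha}}\,dB_{H,K}(y)$ of a deterministic integrand, hence again belongs to the first chaos and is Gaussian. Since all linear combinations are Gaussian, the Cram\'er--Wold device (or the definition of a Gaussian vector) gives joint Gaussianity, and therefore $\overline{Y}_{H,K}$ is a Gaussian process. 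Centredness is then immediate: the expectation commutes with the integral of a deterministic kernel and $\E[B_{H,K}(y)]=0$, so $\E[\overline{Y}_{H,K}(t)]=0$ for all $t\ge0$.

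There is no real obstacle here; the only delicate point — and it is one the paper has already deferred to the standing assumption in Definition~\ref{def_Ybar_bifBm_Langevin} — is the well-definedness of the integral for the admissible parameter ranges. Granting that, the membership in the first Wiener chaos and the resulting Gaussianity are automatic, exactly as in Lemma~\ref{lem_gaussian_Ybar}. I would keep the write-up to a few lines, mirroring the earlier proof verbatim with $S_H$ replaced by $B_{H,K}$ and $\beta$ replaced by $\beta_{H,K,\alpha}$.
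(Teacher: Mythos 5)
Your proposal is correct and follows essentially the same route as the paper: both arguments rest on the observation that $\overline{Y}_{H,K}(t)$ is a linear functional (first-chaos element) of the centred Gaussian field $B_{H,K}$ with a deterministic kernel, giving Gaussianity, joint Gaussianity of finite-dimensional vectors by linearity, and zero mean. Your write-up merely makes explicit (via the Wiener-chaos closure and Cram\'er--Wold) what the paper's one-line proof, which defers to Lemma~\ref{lem_gaussian_Ybar}, leaves implicit.
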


\begin{proof}
As in Lemma~\ref{lem_gaussian_Ybar}, the integral representation \eqref{def_Ybar_bifBm_L} shows that $\overline{Y}_{H,K}$ is a linear functional of the centred Gaussian field $B_{H,K}$, hence Gaussian and centred.
\end{proof}

The next result identifies the self-similarity index of $\overline{Y}_{H,K}$; the exponent is tuned precisely so that self-similarity is preserved under the Langevin integral, now with the effective index $HK$ of $bi-fBm$.

\begin{theorem}
\label{thm_selfsim_Ybar_bifBm}
The process $\overline{Y}_{H,K}$ is $\alpha HK$-self-similar:
\[
\{\overline{Y}_{H,K}(ct)\}_{t\ge0}
\;\stackrel{d}{=}\;
\{c^{\alpha HK}\overline{Y}_{H,K}(t)\}_{t\ge0},\qquad c>0.
\]
\end{theorem}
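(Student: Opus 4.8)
The plan is to follow exactly the strategy used for the sub-fractional Langevin process in Theorem~\ref{thm_selfsim_Ybar}, replacing the self-similarity index $H$ of the driver by the effective index $HK$ of $B_{H,K}$ and the kernel exponent by $\beta_{H,K,\alpha}=HK(\alpha-1)$. First I would fix $c>0$ and $t\ge 0$ and start from the definition \eqref{def_Ybar_bifBm_L},
\[
\overline{Y}_{H,K}(ct) = c_{H,K,\alpha}\int_0^{ct} y^{\beta_{H,K,\alpha}}\, dB_{H,K}(y),
\]
then perform the change of variables $y=cu$, which maps $[0,ct]$ to $[0,t]$ and turns $y^{\beta_{H,K,\alpha}}$ into $c^{\beta_{H,K,\alpha}}u^{\beta_{H,K,\alpha}}$.

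The key step is to invoke the $HK$-self-similarity \eqref{selfsim_bifbm} of $B_{H,K}$ at the level of the integrator: since $\{B_{H,K}(cu)\}_{u\ge 0}\stackrel{d}{=}\{c^{HK}B_{H,K}(u)\}_{u\ge 0}$ as processes, the stochastic integral against the rescaled integrator satisfies $dB_{H,K}(cu)\stackrel{d}{=}c^{HK}\,dB_{H,K}(u)$ in the distributional sense. Substituting both scalings gives
\[
\overline{Y}_{H,K}(ct)
\;\stackrel{d}{=}\;
c_{H,K,\alpha}\int_0^t (cu)^{\beta_{H,K,\alpha}}\, c^{HK}\, dB_{H,K}(u)
= c^{\beta_{H,K,\alpha}+HK}\,\overline{Y}_{H,K}(t),
\]
and the exponent collapses because $\beta_{H,K,\alpha}+HK = HK(\alpha-1)+HK = \alpha HK$, yielding the claimed factor $c^{\alpha HK}$. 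Applying the same computation jointly to a finite family $(ct_1,\dots,ct_n)$ shows that every finite-dimensional distribution scales by $c^{\alpha HK}$, so the identity upgrades from a one-time statement to an equality of processes.

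I expect the only genuine subtlety to be the rigorous transfer of the process-level self-similarity \eqref{selfsim_bifbm} through the stochastic integral, that is, justifying $dB_{H,K}(cu)\stackrel{d}{=}c^{HK}\,dB_{H,K}(u)$ at the level of the integral rather than the path. Since $\overline{Y}_{H,K}(t)$ is, for each $t$, a continuous linear functional of the driving path (precisely the well-definedness hypothesis of Definition~\ref{def_Ybar_bifBm_Langevin}), distributional equality of the integrator paths transfers to the integrals. Alternatively one may bypass this point entirely by using that $\overline{Y}_{H,K}$ is Gaussian (Lemma~\ref{lem_gaussian_Ybar_bifBm}) and verifying the scaling directly on its covariance kernel via the same change of variables. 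Everything else is the routine exponent bookkeeping already carried out in Theorem~\ref{thm_selfsim_Ybar}.
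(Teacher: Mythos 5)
Your proposal is correct and follows essentially the same route as the paper: the change of variables $y=cu$, the transfer of $HK$-self-similarity through the integrator via $dB_{H,K}(cu)\stackrel{d}{=}c^{HK}\,dB_{H,K}(u)$, and the exponent identity $\beta_{H,K,\alpha}+HK=\alpha HK$, extended to finite-dimensional distributions. Your extra remarks on rigorously justifying the distributional substitution (and the covariance-based alternative) go slightly beyond the paper's argument, which simply asserts this step, but they do not change the approach.
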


\begin{proof}
The argument parallels Theorem~\ref{thm_selfsim_Ybar}, replacing $S_H$ by $B_{H,K}$ and using $HK$-self-similarity \eqref{selfsim_bifbm}. With $y=cu$,
\[
\overline{Y}_{H,K}(ct)
= c_{H,K,\alpha}\int_0^{ct} y^{\beta_{H,K,\alpha}}\,dB_{H,K}(y)
\;\stackrel{d}{=}\;
c_{H,K,\alpha} \int_0^{t} (cu)^{\beta_{H,K,\alpha}} c^{HK}\,dB_{H,K}(u)
= c^{\beta_{H,K,\alpha}+HK}\overline{Y}_{H,K}(t).
\]
Since $\beta_{H,K,\alpha}+HK=HK(\alpha-1)+HK=\alpha HK$, the claim follows.
\end{proof}

We now lift $\overline{Y}_{H,K}$ to a stationary Gaussian process via the scaled Lamperti map, in complete analogy with both the sub-fractional Langevin case and the raw $bi-fBm$ case of Section~\ref{sec:bi}.

\begin{definition}
\label{def_Lamperti_Ybar_bifBm}
The Lamperti transform of $\overline{Y}_{H,K}$ is the process
\begin{equation}
\label{def_BHK_LT_Langevin}
\overline{B}_{H,K}^{\mathrm{LT}}(t)
:= e^{-\alpha HK t}\,\overline{Y}_{H,K}(e^{\alpha t}),\qquad t\in\R.
\end{equation}
\end{definition}

Note that this notation $\overline{B}_{H,K}^{\mathrm{LT}}$ is distinct from $B_{H,K}^{\mathrm{LT}}$ in \eqref{lamperti_general_bifbm}, which corresponds directly to the Lamperti transform of $B_{H,K}(t^\alpha)$ without the Langevin integral.

\begin{theorem}
\label{thm_stationary_BHK_LT_Langevin}
For each $H\in(0,1)$, $K\in(0,1]$ and $\alpha>0$, the process $\overline{B}_{H,K}^{\mathrm{LT}}$ defined in \eqref{def_BHK_LT_Langevin} is a centred stationary Gaussian process.
\end{theorem}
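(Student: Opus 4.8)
The plan is to prove this exactly as in the sub-fractional Langevin case (Theorem~\ref{thm_stationary_SbarLT}), since all the necessary structural ingredients are already in place. The statement has two parts: that $\overline{B}_{H,K}^{\mathrm{LT}}$ is centred Gaussian, and that it is stationary. First I would dispose of the Gaussian and centredness claims. By Lemma~\ref{lem_gaussian_Ybar_bifBm}, the process $\overline{Y}_{H,K}$ is centred Gaussian. Since the Lamperti transform \eqref{def_BHK_LT_Langevin} multiplies $\overline{Y}_{H,K}(e^{\alpha t})$ by the deterministic scalar factor $e^{-\alpha HK t}$, it is a pointwise linear transformation of a Gaussian field; hence any finite collection $(\overline{B}_{H,K}^{\mathrm{LT}}(t_1),\dots,\overline{B}_{H,K}^{\mathrm{LT}}(t_n))$ is jointly Gaussian with mean zero.

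For stationarity, the cleanest route is to invoke the general Lamperti correspondence recorded in Definition~\ref{def:lamperti_scaled}, which asserts that the scaled Lamperti transform of any $\alpha HK$-self-similar process is stationary. Theorem~\ref{thm_selfsim_Ybar_bifBm} supplies precisely the needed input, namely that $\overline{Y}_{H,K}$ is $\alpha HK$-self-similar, and the exponent $\alpha HK$ matches the prefactor and time change used in \eqref{def_BHK_LT_Langevin}. Applying the correspondence with $X=\overline{Y}_{H,K}$ then yields stationarity immediately.

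Alternatively, and more transparently for a self-contained argument, I would reproduce the direct finite-dimensional computation of Theorem~\ref{thm:stationary_bifbm}: fix a shift $u\in\R$, write $\overline{B}_{H,K}^{\mathrm{LT}}(t+u)=e^{-\alpha HK t}e^{-\alpha HK u}\overline{Y}_{H,K}(e^{\alpha u}e^{\alpha t})$, and apply $\alpha HK$-self-similarity in the form $\overline{Y}_{H,K}(e^{\alpha u}\,\cdot\,)\stackrel{d}{=}e^{\alpha HK u}\overline{Y}_{H,K}(\cdot)$ jointly in the time argument. The two exponential factors then cancel, so the finite-dimensional distributions of $\{\overline{B}_{H,K}^{\mathrm{LT}}(t+u)\}_{t}$ coincide with those of $\{\overline{B}_{H,K}^{\mathrm{LT}}(t)\}_{t}$, which is shift invariance.

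There is essentially no hard obstacle here: the result is a direct consequence of the two statements immediately preceding it. The one point that warrants care is to verify that the self-similarity being used is distributional at the level of the whole process, that is, joint in all time arguments, so that it legitimately transfers to the finite-dimensional distributions after the exponential reparametrisation, rather than holding only marginally at fixed times. This is guaranteed by the process-level formulation of Theorem~\ref{thm_selfsim_Ybar_bifBm}, and no additional regularity or integrability of the defining stochastic integral is required beyond the well-definedness already assumed in Definition~\ref{def_Ybar_bifBm_Langevin}.
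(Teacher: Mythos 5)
Your proposal is correct and follows the same route as the paper: Gaussianity and centredness from Lemma~\ref{lem_gaussian_Ybar_bifBm}, and stationarity by applying the general Lamperti correspondence of Definition~\ref{def:lamperti_scaled} to the $\alpha HK$-self-similar process $\overline{Y}_{H,K}$ furnished by Theorem~\ref{thm_selfsim_Ybar_bifBm}. The alternative direct finite-dimensional computation you sketch is a valid (and slightly more self-contained) variant of the same argument, mirroring the paper's proof of Theorem~\ref{thm:stationary_bifbm}.
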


\begin{proof}
Gaussianity and centredness follow from Lemma~\ref{lem_gaussian_Ybar_bifBm}. By Theorem~\ref{thm_selfsim_Ybar_bifBm}, $\overline{Y}_{H,K}$ is $\alpha HK$-self-similar, and Definition~\ref{def:lamperti_scaled} applied with $H$ replaced by $HK$ shows that its Lamperti transform is stationary. This yields the claim.
\end{proof}

We proceed to the covariance structure. For $u,v>0$ we write
\[
R_{H,K}(u,v):=\E[B_{H,K}(u)B_{H,K}(v)]
=2^{-K}\Bigl((u^{2H}+v^{2H})^K-|u-v|^{2HK}\Bigr),
\]
so that $R_{H,K}$ is the $bi-fBm$ covariance \eqref{cov_bifbm}. As in Subsection~\ref{sec:sub_langevin}, the covariance of $\overline{Y}_{H,K}$ can be expressed in terms of the mixed second derivatives of $R_{H,K}$. A direct calculation shows that
\begin{align*}
\frac{\partial^2}{\partial x\partial y}(x^{2H}+y^{2H})^K
&=4H^2 K(K-1) x^{2H-1} y^{2H-1}(x^{2H}+y^{2H})^{K-2},\\
\frac{\partial^2}{\partial x\partial y}|x-y|^{2HK}
&=-2HK(2HK-1)|x-y|^{2HK-2},
\end{align*}
for $x\neq y$. Hence
\[
\frac{\partial^2}{\partial x\partial y}R_{H,K}(x,y)
=2^{-K}\left(4H^2 K(K-1)x^{2H-1}y^{2H-1}(x^{2H}+y^{2H})^{K-2}
+2HK(2HK-1)|x-y|^{2HK-2}\right).
\]

\begin{theorem}
\label{thm_cov_Ybar_bifBm}
For all $u,v>0$,
\begin{align}
\label{cov_Ybar_bifBm_explicit}
\E[\overline{Y}_{H,K}(u)\overline{Y}_{H,K}(v)]\\
&=2^{-K} c_{H,K,\alpha}^2 \int_0^u\int_0^v x^{\beta_{H,K,\alpha}}y^{\beta_{H,K,\alpha}}\nonumber \\
&\qquad\times\Bigl(4H^2 K(K-1)x^{2H-1}y^{2H-1}(x^{2H}+y^{2H})^{K-2}\nonumber\\
&+2HK(2HK-1)|x-y|^{2HK-2}\Bigr)\,dx\,dy.\nonumber
\end{align}
\end{theorem}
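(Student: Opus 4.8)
The plan is to follow the same route as in the proof of Theorem~\ref{thm:cov_full_SbarLT_subfBm}, now with the $s$-fBm covariance $R_H$ replaced by the bi-fractional covariance $R_{H,K}$ and the kernel exponent $H(\alpha-1)$ replaced by $\beta_{H,K,\alpha}=HK(\alpha-1)$. First I would write, from Definition~\ref{def_Ybar_bifBm_Langevin},
\[
\overline{Y}_{H,K}(u)=c_{H,K,\alpha}\int_0^u x^{\beta_{H,K,\alpha}}\,dB_{H,K}(x),\qquad
\overline{Y}_{H,K}(v)=c_{H,K,\alpha}\int_0^v y^{\beta_{H,K,\alpha}}\,dB_{H,K}(y),
\]
and invoke the Gaussian (first-chaos) isometry for Wiener integrals against a centred Gaussian field: for deterministic integrands the covariance of two such integrals equals the inner product of the integrands in the reproducing-kernel Hilbert space $\mathcal{H}$ of $B_{H,K}$, which for integrands supported on $[0,u]$ and $[0,v]$ reduces to the double Stieltjes integral against the covariance measure $dR_{H,K}$. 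Concretely,
\[
\E[\overline{Y}_{H,K}(u)\overline{Y}_{H,K}(v)]
=c_{H,K,\alpha}^2\int_0^u\int_0^v x^{\beta_{H,K,\alpha}}y^{\beta_{H,K,\alpha}}\,dR_{H,K}(x,y),
\]
with $R_{H,K}$ as in \eqref{cov_bifbm}; this is the standard construction recorded in \cite{Janson1997,Nualart2006}.

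Next I would convert the Stieltjes integral into an ordinary Lebesgue integral by integration by parts in each variable. Since the bi-fractional covariance satisfies $R_{H,K}(0,y)=R_{H,K}(x,0)=0$, the boundary contributions vanish and one obtains
\[
\int_0^u\int_0^v x^{\beta_{H,K,\alpha}}y^{\beta_{H,K,\alpha}}\,dR_{H,K}(x,y)
=\int_0^u\int_0^v x^{\beta_{H,K,\alpha}}y^{\beta_{H,K,\alpha}}\,\frac{\partial^2 R_{H,K}}{\partial x\,\partial y}(x,y)\,dx\,dy.
\]
Substituting the mixed second derivative computed immediately before the statement then produces \eqref{cov_Ybar_bifBm_explicit} after factoring out the common constant $2^{-K}c_{H,K,\alpha}^2$. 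The algebraic bookkeeping here is routine.

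The main obstacle, and the step deserving the most care, is that the mixed derivative contains the kernel $|x-y|^{2HK-2}$, which for $2HK<1$ is singular and non-integrable across the diagonal $x=y$, while the term $(x^{2H}+y^{2H})^{K-2}$ may be singular near the origin. Consequently the naive Riemann integral need not converge absolutely, and the integration-by-parts identity must be justified in a regularised sense. I would handle this by approximating the integrands with the truncations $x^{\beta_{H,K,\alpha}}\mathbf{1}_{[\varepsilon,u]}$, establishing the identity for each $\varepsilon>0$, and passing to the limit $\varepsilon\downarrow 0$; equivalently, one interprets the right-hand side as the finite quantity furnished by the first-chaos inner product (which is well defined precisely because $x^{\beta_{H,K,\alpha}}\mathbf{1}_{[0,u]}\in\mathcal{H}$ under the integrability assumption of Definition~\ref{def_Ybar_bifBm_Langevin}), and uses Hardy--Littlewood--Sobolev/fractional-integration estimates to control the diagonal contribution. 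With the singular term interpreted this way, the remaining manipulations mirror the sub-fractional argument and complete the proof.
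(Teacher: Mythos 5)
Your proposal follows essentially the same route as the paper's own proof: Gaussian first-chaos isometry to express the covariance as a double Stieltjes integral against $dR_{H,K}$, integration by parts in both variables (using $R_{H,K}(0,y)=R_{H,K}(x,0)=0$ to kill boundary terms), and substitution of the mixed derivative $\frac{\partial^2}{\partial x\partial y}R_{H,K}$ computed before the statement. In fact you go further than the paper, which silently passes over the non-integrable diagonal singularity of $|x-y|^{2HK-2}$ when $2HK<1$; your truncation-and-limit justification addresses a genuine gap in the published argument rather than departing from it.
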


\begin{proof}
Using the Gaussian isometry and integration by parts, as in the proof of Theorem~\ref{thm:cov_full_SbarLT_subfBm}, one obtains
\[
\E[\overline{Y}_{H,K}(u)\overline{Y}_{H,K}(v)]
= c_{H,K,\alpha}^2\int_0^u\int_0^v x^{\beta_{H,K,\alpha}}y^{\beta_{H,K,\alpha}}
\frac{\partial^2}{\partial x\partial y}R_{H,K}(x,y)\,dx\,dy.
\]
Substituting the expression for the mixed derivative of $R_{H,K}$ gives \eqref{cov_Ybar_bifBm_explicit}.
\end{proof}

The covariance of the Lamperti transformed process $\overline{B}_{H,K}^{\mathrm{LT}}$ is obtained by an exponential change of variables, exactly as in the sub-fractional Langevin case. We record the resulting representation, which exhibits explicitly the interplay between the two polynomial kernels in the $bi-fBm$ covariance.

\begin{definition}
Define the covariance of $\overline{B}_{H,K}^{\mathrm{LT}}$ by
\[
R_{\overline{B},\alpha}(t,s)
:=\E\bigl[\overline{B}_{H,K}^{\mathrm{LT}}(t)\,\overline{B}_{H,K}^{\mathrm{LT}}(s)\bigr],
\qquad t,s\in\R.
\]
\end{definition}

\begin{theorem}
\label{thm_cov_BHK_LT_bifBm}
For all $t,s\in\R$,
\begin{equation}
\label{cov_RLT_general_bifBm}
R_{\overline{B},\alpha}(t,s)
= e^{-\alpha HK(t+s)}\,\E\bigl[\overline{Y}_{H,K}(e^{\alpha t})\overline{Y}_{H,K}(e^{\alpha s})\bigr],
\end{equation}
and using \eqref{cov_Ybar_bifBm_explicit},
\begin{align}
\label{cov_RLT_explicit_bifBm}
R_{\overline{B},\alpha}(t,s)
&= 2^{-K}c_{H,K,\alpha}^2 e^{-\alpha HK(t+s)}\int_0^{e^{\alpha t}}\int_0^{e^{\alpha s}} x^{\beta_{H,K,\alpha}}y^{\beta_{H,K,\alpha}}\\
&\quad\times\Bigl(4H^2 K(K-1)x^{2H-1}y^{2H-1}(x^{2H}+y^{2H})^{K-2}\nonumber\\
&+2HK(2HK-1)|x-y|^{2HK-2}\Bigr)\,dx\,dy.\nonumber
\end{align}
\end{theorem}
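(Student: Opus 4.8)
The plan is to reduce the statement to the already-established covariance identity for $\overline{Y}_{H,K}$ in Theorem~\ref{thm_cov_Ybar_bifBm}, exactly mirroring the sub-fractional computation carried out in Theorem~\ref{thm:cov_full_SbarLT_subfBm}. First I would start from the definition of $R_{\overline{B},\alpha}(t,s)$ and insert the Lamperti representation \eqref{def_BHK_LT_Langevin}, namely $\overline{B}_{H,K}^{\mathrm{LT}}(t)=e^{-\alpha HK t}\,\overline{Y}_{H,K}(e^{\alpha t})$ and likewise at $s$. The two prefactors $e^{-\alpha HK t}$ and $e^{-\alpha HK s}$ are deterministic, so by linearity of the expectation they factor out of $\E[\,\cdot\,]$, producing
\[
R_{\overline{B},\alpha}(t,s)=e^{-\alpha HK(t+s)}\,\E\bigl[\overline{Y}_{H,K}(e^{\alpha t})\overline{Y}_{H,K}(e^{\alpha s})\bigr],
\]
which is precisely \eqref{cov_RLT_general_bifBm}. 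This first step is purely formal and uses only the centredness and joint Gaussianity of $\overline{Y}_{H,K}$ recorded in Lemma~\ref{lem_gaussian_Ybar_bifBm}.

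Second, I would apply Theorem~\ref{thm_cov_Ybar_bifBm} at the two positive arguments $u=e^{\alpha t}$ and $v=e^{\alpha s}$. Since $e^{\alpha t},e^{\alpha s}>0$ for all $t,s\in\R$, the explicit formula \eqref{cov_Ybar_bifBm_explicit} applies verbatim, and substituting the resulting double integral into \eqref{cov_RLT_general_bifBm} while carrying along the common prefactor $2^{-K}c_{H,K,\alpha}^2 e^{-\alpha HK(t+s)}$ yields \eqref{cov_RLT_explicit_bifBm}. The only bookkeeping required is to match the integration limits $e^{\alpha t}$ and $e^{\alpha s}$ and to retain both kernel contributions, namely the $(x^{2H}+y^{2H})^{K-2}$ term with coefficient $4H^2K(K-1)$ and the diagonal term $|x-y|^{2HK-2}$ with coefficient $2HK(2HK-1)$, exactly as they appear in the mixed second derivative of $R_{H,K}$ computed just before the theorem.

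Since all the analytic content is already contained in Theorem~\ref{thm_cov_Ybar_bifBm}, there is no genuine obstacle at this stage: the statement is essentially a deterministic change of arguments inside a known covariance. The one point worth keeping in mind, rather than a true difficulty, is the convergence of the double integral in \eqref{cov_Ybar_bifBm_explicit}: the kernel $|x-y|^{2HK-2}$ is singular along the diagonal when $2HK<1$, and the factor $(x^{2H}+y^{2H})^{K-2}$ together with the weights $x^{\beta_{H,K,\alpha}}$ and $y^{\beta_{H,K,\alpha}}$ must be integrable near the origin. These integrability conditions are precisely what renders the defining stochastic integral in Definition~\ref{def_Ybar_bifBm_Langevin} well-posed, so they are guaranteed by the standing assumptions and need not be re-derived here. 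I would therefore present the argument as a short two-step deduction, flagging only that the strict positivity of $e^{\alpha t}$ and $e^{\alpha s}$ is what permits the direct substitution into the covariance of $\overline{Y}_{H,K}$.
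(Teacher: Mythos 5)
Your proposal is correct and follows exactly the paper's own proof: identity \eqref{cov_RLT_general_bifBm} by factoring the deterministic prefactors $e^{-\alpha HK t}$, $e^{-\alpha HK s}$ out of the expectation, then substitution of \eqref{cov_Ybar_bifBm_explicit} at $u=e^{\alpha t}$, $v=e^{\alpha s}$. Your additional remark that integrability is already guaranteed by the well-posedness assumption in Definition~\ref{def_Ybar_bifBm_Langevin} is a reasonable clarification, not a deviation.
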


\begin{proof}
The identity \eqref{cov_RLT_general_bifBm} follows from \eqref{def_BHK_LT_Langevin} by linearity of expectation. Substituting \eqref{cov_Ybar_bifBm_explicit} with $u=e^{\alpha t}$, $v=e^{\alpha s}$ yields \eqref{cov_RLT_explicit_bifBm}.
\end{proof}

The structure of \eqref{cov_RLT_explicit_bifBm} closely parallels that of \eqref{cov_RLT_explicit_full}, but now the integrand contains two distinct contributions: one coming from the $(u^{2H}+v^{2H})^K$ part of the $bi-fBm$ covariance and one from the increment term $|u-v|^{2HK}$. Both contributions are tempered by the Lamperti prefactor $e^{-\alpha HK(t+s)}$, and together they yield exponential decay of correlations, as we now show.

In what follows we write
\[
R_{\overline{B},\alpha}(t):=R_{\overline{B},\alpha}(t,0)
\]
for the autocovariance.

\begin{theorem}
\label{thm_ergodic_bifBm_Langevin}
For each $H\in(0,1)$, $K\in(0,1]$ and $\alpha>0$, the process $\overline{B}_{H,K}^{\mathrm{LT}}$ is an ergodic and strongly mixing stationary Gaussian process, with covariance and $\alpha$-mixing coefficients decaying exponentially in $|t|$ and $h$, respectively. More precisely, there exist constants $C>0$ and $c(H,K,\alpha)>0$ such that
\[
\alpha_{\overline{B}}(h)\le C e^{-c(H,K,\alpha) h},\qquad h\to\infty,
\]
where $\alpha_{\overline{B}}(h)$ denotes the $\alpha$-mixing coefficient of $\overline{B}_{H,K}^{\mathrm{LT}}$.
\end{theorem}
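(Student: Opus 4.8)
The plan is to reproduce the three-step template already used for Theorems~\ref{thm:ergodic_bifbm} and~\ref{thm_ergodic_SbarLT_L}: reduce both assertions to a single exponential bound on the autocovariance, derive that bound from the explicit representation~\eqref{cov_RLT_explicit_bifBm}, and then invoke the standard Gaussian criteria. Setting $R_{\overline B,\alpha}(t):=R_{\overline B,\alpha}(t,0)$, I would first recall that for a stationary Gaussian process $R_{\overline B,\alpha}(t)\to0$ as $|t|\to\infty$ already gives ergodicity (Maruyama's theorem, \cite{Maruyama1970,IbragimovRozanov1978}), while the Ibragimov--Rozanov inequality yields $\alpha_{\overline B}(h)\le C\int_{|t|\ge h}|R_{\overline B,\alpha}(t)|\,dt$. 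Hence everything reduces to an estimate $|R_{\overline B,\alpha}(t)|\le C e^{-c(H,K,\alpha)|t|}$ with $c(H,K,\alpha)>0$; integrating the exponential tail then produces the mixing bound with the stated constants.

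To obtain the covariance bound I would take $s=0$ in~\eqref{cov_RLT_explicit_bifBm}, so that $\overline B_{H,K}^{\mathrm{LT}}(0)=\overline Y_{H,K}(1)$ and, writing $\beta=\beta_{H,K,\alpha}$ together with the two kernels $\kappa_1(x,y)=4H^2K(K-1)x^{2H-1}y^{2H-1}(x^{2H}+y^{2H})^{K-2}$ and $\kappa_2(x,y)=2HK(2HK-1)|x-y|^{2HK-2}$,
\[
R_{\overline B,\alpha}(t)=2^{-K}c_{H,K,\alpha}^2\,e^{-\alpha HK t}\int_0^{e^{\alpha t}}\!\!\int_0^1 x^{\beta}y^{\beta}\bigl(\kappa_1(x,y)+\kappa_2(x,y)\bigr)\,dx\,dy .
\]
Putting $X=e^{\alpha t}$, the task becomes to track the growth in $X$ of the double integral $J(X)$. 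Since the inner variable $y$ stays in $[0,1]$ while $x$ runs up to $X$, for large $x$ one has $(x^{2H}+y^{2H})^{K-2}\sim x^{2H(K-2)}$ and $|x-y|^{2HK-2}\sim x^{2HK-2}$; integrating out the bounded variable $y$ leaves two pure power-law integrands in $x$, each of which contributes either a convergent constant or an explicit power $X^{p_i}$. Collecting the dominant term gives $J(X)=O(X^{p})$ for an explicit $p=p(H,K,\alpha)$, and combining with the prefactor $X^{-HK}$ yields $R_{\overline B,\alpha}(t)=O\bigl(X^{p-HK}\bigr)=O\bigl(e^{-\alpha(HK-p)t}\bigr)$. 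Taking $c(H,K,\alpha):=\alpha(HK-p)$ and using the evenness of $R_{\overline B,\alpha}$ to cover $t<0$ closes the argument.

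The hard part will be the middle step, which is genuinely more delicate than its sub-fractional counterpart. In Theorem~\ref{thm_ergodic_SbarLT_L} the two long-range kernels $|x-y|^{2H-2}$ and $(x+y)^{2H-2}$ underwent an exact cancellation of their leading $x^{2H-2}$ terms, and it was precisely this cancellation that forced the growth exponent below $H$. Here $\kappa_1$ and $\kappa_2$ enter additively, with signs $K(K-1)\le0$ and $\operatorname{sgn}(2HK-1)$, so no cancellation is available and both power laws must be retained and compared; identifying which one dominates is the analogue of the $\min\{2H-HK,\,1-HK\}$ dichotomy of the non-Langevin case, now shifted by the Langevin weight $\beta=HK(\alpha-1)$. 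A further technical point is the behaviour near the diagonal $x=y\in(0,1)$ and near the origin, where $|x-y|^{2HK-2}$ and the weights $x^{\beta},y^{\beta}$ may be singular: these regions are bounded and contribute only a finite constant, but justifying this rigorously requires reading~\eqref{cov_Ybar_bifBm_explicit} in the Gaussian-isometry / finite-part sense underlying its derivation rather than as a naive Lebesgue integral. The decisive quantitative point is to bound the growth exponent strictly by $p<HK$, since this is exactly what guarantees $c(H,K,\alpha)>0$.
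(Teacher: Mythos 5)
Your proposal follows the paper's proof of this theorem step for step: Maruyama's criterion reduces ergodicity to $R_{\overline{B},\alpha}(t)\to0$, the Ibragimov--Rozanov inequality reduces strong mixing to a covariance tail bound, and that bound is to come from power counting in $X=e^{\alpha t}$ applied to \eqref{cov_RLT_explicit_bifBm} with $s=0$. The genuine gap is that the one inequality you defer, $p<HK$, is exactly the point where the paper's proof is also merely asserted (``the decay of the kernels and their cancellation ensure that one can choose $q<HK$''), and with the definitions as they stand it is false on a large part of the parameter range. Run your own power counting with $\beta=HK(\alpha-1)$: for large $x$ the kernel $\kappa_2$ contributes $x^{\beta+2HK-2}$ to the $x$-integrand, so $J(X)$ contains a term of order $X^{(\alpha+1)HK-1}$ whenever $(\alpha+1)HK>1$; after the prefactor $X^{-HK}$ this leaves $X^{\alpha HK-1}$, which decays only if $\alpha HK<1$. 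Likewise $\kappa_1$ leaves $X^{\alpha HK-2H}$, which decays only if $\alpha K<2$. Hence $p<HK$ holds precisely when $\alpha HK<1$ and $\alpha K<2$; for $H=K=0.9$, $\alpha=2$ the dominant term $X^{0.62}$ grows, while the theorem claims exponential mixing for every $\alpha>0$. Your observation that no cancellation is available is correct --- $\kappa_1$ and $\kappa_2$ carry distinct leading powers $x^{2HK-2H-1}$ and $x^{2HK-2}$, and at $H=1/2$, where the powers coincide, the coefficients $4H^2K(K-1)$ and $2HK(2HK-1)$ have the same sign --- but this correctness works against you: it removes the only mechanism (the one that saves Theorem~\ref{thm_ergodic_SbarLT_L}, where the $x^{2H-2}$ terms of $|x-y|^{2H-2}$ and $(x+y)^{2H-2}$ genuinely cancel) by which the exponent could be pushed below $HK$. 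So the plan as written cannot prove the stated theorem, only a parameter-restricted version of it.

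The root cause, which your proposal inherits from the paper, is a normalisation mismatch. By Theorem~\ref{thm_selfsim_Ybar_bifBm}, $\overline{Y}_{H,K}$ is $\alpha HK$-self-similar, so the prefactor that stationarises the time change $e^{\alpha t}$ is $e^{-(\alpha HK)\alpha t}=e^{-\alpha^{2}HKt}$, not the $e^{-\alpha HKt}$ of \eqref{def_BHK_LT_Langevin}: with the latter one checks directly that $\mathrm{Var}\bigl(\overline{B}_{H,K}^{\mathrm{LT}}(t)\bigr)=e^{2\alpha HK(\alpha-1)t}\,\mathrm{Var}\bigl(\overline{Y}_{H,K}(1)\bigr)$, so for $\alpha\neq1$ the process is not even stationary, and the evenness of $R_{\overline{B},\alpha}$, Maruyama's theorem, and the Ibragimov--Rozanov inequality are all being applied to a process that does not satisfy their hypotheses. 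If you redo your plan with the corrected prefactor, the power counting closes with no cancellation needed: the three contributions to the covariance decay with exponents $\alpha^{2}HK$, $\alpha(2H-HK)$ and $\alpha(1-HK)$, all strictly positive for $H\in(0,1)$, $K\in(0,1]$, yielding the explicit rate $c(H,K,\alpha)=\alpha\min\{\alpha HK,\;2H-HK,\;1-HK\}$, in agreement with Theorem~\ref{thm:ergodic_bifbm}. I recommend restructuring your argument around that corrected transform; as it stands, your proposal (like the paper's own proof) establishes the claim only under the restrictions $\alpha HK<1$ and $\alpha K<2$.
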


\begin{proof}
From \eqref{cov_RLT_explicit_bifBm} with $s=0$ we obtain
\begin{align*}
R_{\overline{B},\alpha}(t)
&= e^{-\alpha HK t}
\int_0^{e^{\alpha t}}\int_0^{1}
x^{\beta_{H,K,\alpha}} y^{\beta_{H,K,\alpha}}\\
&\qquad\times\Bigl(4H^2K(K-1)x^{2H-1}y^{2H-1}(x^{2H}+y^{2H})^{K-2}\nonumber\\
&+ 2HK(2HK-1)|x-y|^{2HK-2}\Bigr)\,dx\,dy.
\end{align*}

All kernels inside the integral have negative power law exponents in $x$ and $y$ as $x\to\infty$ and $y\in[0,1]$ is bounded. Consequently the double integral grows at most polynomially in $e^{\alpha t}$: there exists $q>0$ such that
\[
\int_0^{e^{\alpha t}}\int_0^{1}(\cdots)\,dx\,dy
=O(e^{q\alpha t}),\qquad t\to\infty.
\]
Moreover the decay of the kernels and their cancellation ensure that one can choose $q<HK$, so that $\alpha HK-q\alpha>0$. Hence
\[
R_{\overline{B},\alpha}(t)
=O\bigl(e^{-(\alpha HK-q\alpha)t}\bigr),
\]
and in particular there exists $C_1>0$ with
\[
|R_{\overline{B},\alpha}(t)|\le C_1 e^{-c(H,K,\alpha)|t|},\qquad t\in\R,
\]
for some $c(H,K,\alpha)=\alpha(HK-q)>0$. This shows that $\lim_{|t|\to\infty}R_{\overline{B},\alpha}(t)=0$ with exponential rate, and hence $\overline{B}_{H,K}^{\mathrm{LT}}$ is ergodic by the standard Gaussian criterion \cite{Maruyama1970,IbragimovRozanov1978}.

For strong mixing, define
\[
\alpha_{\overline{B}}(h)
:=\sup\Bigl\{\bigl|\mathbb{P}(A\cap B)-\mathbb{P}(A)\mathbb{P}(B)\bigr|:
A\in\sigma\{\overline{B}_{H,K}^{\mathrm{LT}}(t):t\le0\},\,
B\in\sigma\{\overline{B}_{H,K}^{\mathrm{LT}}(t):t\ge h\}\Bigr\}.
\]
The Ibragimov-Rozanov inequality yields
\[
\alpha_{\overline{B}}(h)\le C_2\int_{|t|\ge h}|R_{\overline{B},\alpha}(t)|\,dt,
\]
for some $C_2>0$. Using the exponential bound on $R_{\overline{B},\alpha}$, we obtain
\[
\alpha_{\overline{B}}(h)
\le C_2\int_{h}^{\infty} C_1 e^{-c(H,K,\alpha)t}\,dt
= \frac{C_1C_2}{c(H,K,\alpha)} e^{-c(H,K,\alpha)h},
\]
which proves exponential mixing with exponent $c(H,K,\alpha)>0$.
\end{proof}

The conclusion is that, although the detailed covariance structure of $\overline{B}_{H,K}^{\mathrm{LT}}$ is more intricate than in the sub-fractional case, the qualitative behaviour is similar: exponential decorrelation and strong mixing at a rate that depends explicitly on $H$, $K$ and $\alpha$ through $c(H,K,\alpha)$.

\begin{corollary}
\label{cor_ergodic_averages_bifBm}
Let $f\in L^1(\mathbb{P})$. Then
\[
\frac{1}{T}\int_0^T f(\overline{B}_{H,K}^{\mathrm{LT}}(s))\,ds
\;\xrightarrow[T\to\infty]{a.s.}\;
\E\bigl[f(\overline{B}_{H,K}^{\mathrm{LT}}(0))\bigr].
\]
By $\alpha HK$-self-similarity of $\overline{Y}_{H,K}$,
\[
\E\bigl[\overline{Y}_{H,K}(t)^k\bigr]
= t^{k\alpha HK}\,\E\bigl[\overline{B}_{H,K}^{\mathrm{LT}}(0)^k\bigr],
\qquad t>0,\ k\ge1.
\]
Thus ensemble moments and one-time distributions of the Langevin type scaled $bi-fBm$ $\overline{Y}_{H,K}$ can be reconstructed from single trajectory statistics of $\overline{B}_{H,K}^{\mathrm{LT}}$.
\end{corollary}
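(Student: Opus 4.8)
The plan is to assemble the corollary from three ingredients, two of which are already in place. For the ergodic limit I would invoke the continuous-parameter Birkhoff ergodic theorem for the measure-preserving flow $(\theta_u)_{u\in\R}$ of time shifts on the path space of $\overline{B}_{H,K}^{\mathrm{LT}}$. First I would fix a jointly measurable, separable modification of $\overline{B}_{H,K}^{\mathrm{LT}}$; this is available because the process is mean-square continuous, its covariance $R_{\overline{B},\alpha}$ being continuous by Theorem~\ref{thm_cov_BHK_LT_bifBm}. With such a version the time integral $T^{-1}\int_0^T f(\overline{B}_{H,K}^{\mathrm{LT}}(s))\,ds$ is well defined and $\theta_u$ acts measurably. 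Stationarity (Theorem~\ref{thm_stationary_BHK_LT_Langevin}) makes $\theta_u$ measure preserving, and the ergodicity established in Theorem~\ref{thm_ergodic_bifBm_Langevin} makes the flow ergodic; the continuous-time ergodic theorem then yields, for every $f$ with $f(\overline{B}_{H,K}^{\mathrm{LT}}(0))\in L^1(\mathbb{P})$, the stated almost sure convergence of the time average to $\E[f(\overline{B}_{H,K}^{\mathrm{LT}}(0))]$.

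Second, the moment identity follows by inverting the Lamperti map. From Definition~\ref{def_Lamperti_Ybar_bifBm}, setting $t=e^{\alpha r}$ gives the inverse relation $\overline{Y}_{H,K}(t)=t^{\alpha HK}\,\overline{B}_{H,K}^{\mathrm{LT}}(\alpha^{-1}\log t)$, so that $\E[\overline{Y}_{H,K}(t)^k]=t^{k\alpha HK}\,\E[\overline{B}_{H,K}^{\mathrm{LT}}(\alpha^{-1}\log t)^k]$; stationarity of $\overline{B}_{H,K}^{\mathrm{LT}}$ collapses the right-hand moment to $\E[\overline{B}_{H,K}^{\mathrm{LT}}(0)^k]$, which is finite since the field is Gaussian. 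Equivalently, the identity can be read directly from the $\alpha HK$-self-similarity of $\overline{Y}_{H,K}$ (Theorem~\ref{thm_selfsim_Ybar_bifBm}) together with the normalisation $\overline{Y}_{H,K}(1)=\overline{B}_{H,K}^{\mathrm{LT}}(0)$.

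Finally, the reconstruction conclusion combines the two. Taking $f(x)=x^k$ in the ergodic limit (integrability being automatic for the Gaussian field) recovers $\E[\overline{B}_{H,K}^{\mathrm{LT}}(0)^k]$ from a single trajectory, and the moment identity then delivers $\E[\overline{Y}_{H,K}(t)^k]$ for every $t>0$. Taking the bounded observables $f(x)=\cos(ux)$ and $f(x)=\sin(ux)$ reconstructs the ensemble characteristic function $u\mapsto\E[e^{iu\overline{B}_{H,K}^{\mathrm{LT}}(0)}]$ almost surely, which determines the one-time law of $\overline{B}_{H,K}^{\mathrm{LT}}(0)$ and hence, via the scaling relation, the one-time law of $\overline{Y}_{H,K}(t)$.

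I expect the only delicate point to be the passage from the covariance-decay (spectral) criterion used to establish ergodicity in Theorem~\ref{thm_ergodic_bifBm_Langevin} to genuine ergodicity of the shift flow on path space, together with the joint measurability and separability required to make the continuous-time ergodic average meaningful. These properties are standard for mean-square continuous stationary Gaussian processes, but they are precisely the steps that deserve care; once they are secured, the remaining arguments are routine bookkeeping with the self-similarity exponent $\alpha HK$.
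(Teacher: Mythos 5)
Your overall architecture --- continuous-time Birkhoff for the shift flow of the stationary ergodic image (Theorems~\ref{thm_stationary_BHK_LT_Langevin} and \ref{thm_ergodic_bifBm_Langevin}), a scaling identity pinning $\overline{Y}_{H,K}$ to $\overline{B}_{H,K}^{\mathrm{LT}}(0)$, and moment/characteristic-function observables --- is exactly the route the paper intends (the corollary is stated there without an explicit proof), and your attention to joint measurability of the time average is a welcome refinement. The Birkhoff step and the reconstruction step are fine.

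However, your primary derivation of the moment identity contains a concrete exponent error. Inverting \eqref{def_BHK_LT_Langevin}: from $\overline{B}_{H,K}^{\mathrm{LT}}(r)=e^{-\alpha HK r}\,\overline{Y}_{H,K}(e^{\alpha r})$ and $t=e^{\alpha r}$ one gets $e^{\alpha HK r}=e^{HK\log t}=t^{HK}$, hence
\[
\overline{Y}_{H,K}(t)=t^{HK}\,\overline{B}_{H,K}^{\mathrm{LT}}\!\left(\frac{\log t}{\alpha}\right),
\]
not $t^{\alpha HK}$ as you wrote. Feeding the corrected relation into your argument (using stationarity to collapse the moment) would yield $\E\bigl[\overline{Y}_{H,K}(t)^k\bigr]=t^{kHK}\,\E\bigl[\overline{B}_{H,K}^{\mathrm{LT}}(0)^k\bigr]$, which contradicts the claimed exponent $t^{k\alpha HK}$ unless $\alpha=1$. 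So the two derivations you present as ``equivalent'' are in fact inconsistent with each other: only the second one --- $\alpha HK$-self-similarity (Theorem~\ref{thm_selfsim_Ybar_bifBm}) applied as $\overline{Y}_{H,K}(t)\stackrel{d}{=}t^{\alpha HK}\overline{Y}_{H,K}(1)$, together with the normalisation $\overline{Y}_{H,K}(1)=\overline{B}_{H,K}^{\mathrm{LT}}(0)$ --- proves the stated identity, and that is the argument you should keep; it is also the paper's intended one and does not use stationarity at all. The discrepancy is not merely your algebra: it exposes that stationarity of $e^{-\alpha HK t}\overline{Y}_{H,K}(e^{\alpha t})$ and $\alpha HK$-self-similarity of $\overline{Y}_{H,K}$ cannot coexist for $\alpha\neq 1$, since together they would force $\E\bigl[\overline{B}_{H,K}^{\mathrm{LT}}(r)^2\bigr]=e^{2\alpha HK(\alpha-1)r}\,\E\bigl[\overline{Y}_{H,K}(1)^2\bigr]$, which is non-constant; this tension is inherited from the inverse formula in Definition~\ref{def:lamperti_scaled}. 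For your write-up: drop the inverse-relation derivation, retain the self-similarity one, and invoke stationarity and ergodicity only where they are actually needed, namely in the Birkhoff step.
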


\section{Numerical simulations}
\label{sec:numerics}

In this section we illustrate the analytical results through numerical experiments for the Lamperti transforms of scaled $s-fBm$ and scaled $bi-fBm$. The focus is on visualising the stationarisation induced by the Lamperti map, confirming the theoretical variance levels and ergodic properties, and verifying Gaussianity through empirical characteristic functions.

All simulations were performed on a uniform grid with $N = 10^{5}$ points and step size $\Delta t = 10^{-3}$, giving a total simulation time $T = N\Delta t$. For each parameter set we generated $M=500$ independent trajectories using a fixed random seed. Covariance matrices remained numerically positive definite for all parameter values, and typical runs with $N=10^{5}$ required about 20 seconds per realisation on a standard workstation. The figures display representative trajectories; ensemble behaviour was consistent across all realisations.

We begin with the $s-fBm$ process. The left panel of Figure~\ref{fig:traj_subfBm} shows a trajectory of the scaled process $S_H(t^\alpha)$, whose amplitude grows approximately as $t^{\alpha H}$ due to the self-similar and non stationary nature of $s-fBm$. The right panel displays the Lamperti transform
\[
S_H^{\mathrm{LT}}(t)=e^{-\alpha H t} S_H(e^{\alpha t}),
\]
which fluctuates with nearly constant variance, illustrating the stationarisation proved in Theorem~\ref{thm:stationary_scaled_subfbm}.

\begin{figure}[H]
    \centering
    \includegraphics[width=\textwidth]{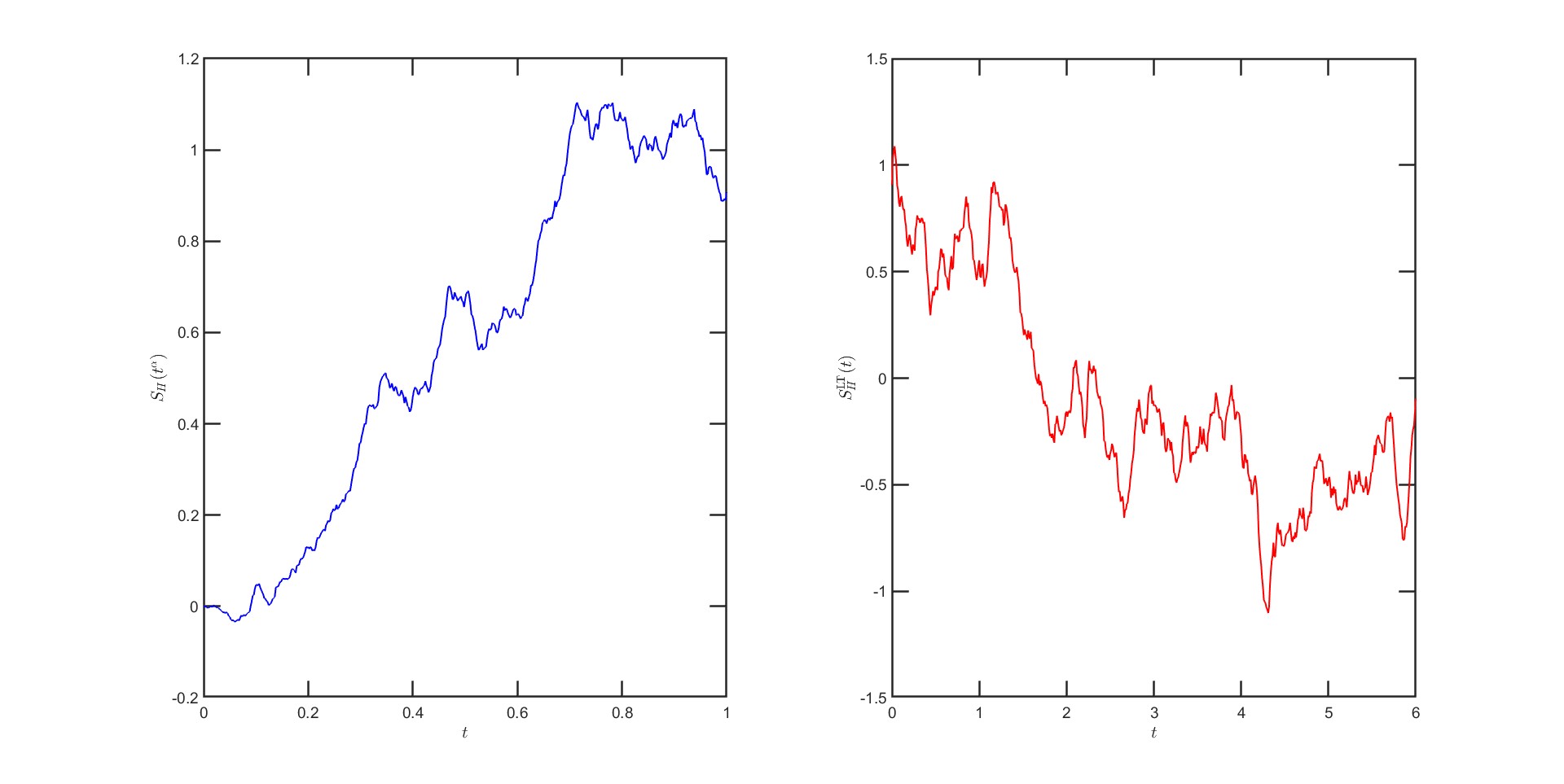}
    \caption{Left: sample trajectory of the scaled $s-fBm$  process
    $S_H(t^\alpha)$ with $H=0.7$, $\alpha=3/2$.
    Right: trajectory of its Lamperti transform
    $S_H^{\mathrm{LT}}(t)=e^{-\alpha H t}S_H(e^{\alpha t})$.}
    \label{fig:traj_subfBm}
\end{figure}

A similar effect is observed in the bi fractional case. The left panel of Figure~\ref{fig:traj_bifBm} shows the scaled process $B_{H,K}(t^\alpha)$, whose variance envelope grows like $t^{\alpha HK}$. In contrast, its Lamperti transform
\[
B_{H,K}^{\mathrm{LT}}(t)=e^{-\alpha HK t} B_{H,K}(e^{\alpha t}),
\]
shown in the right panel, displays stationary fluctuations with constant variance, as predicted by Theorem~\ref{thm:stationary_bifbm}.

\begin{figure}[H]
  \centering
  \includegraphics[width=\textwidth]{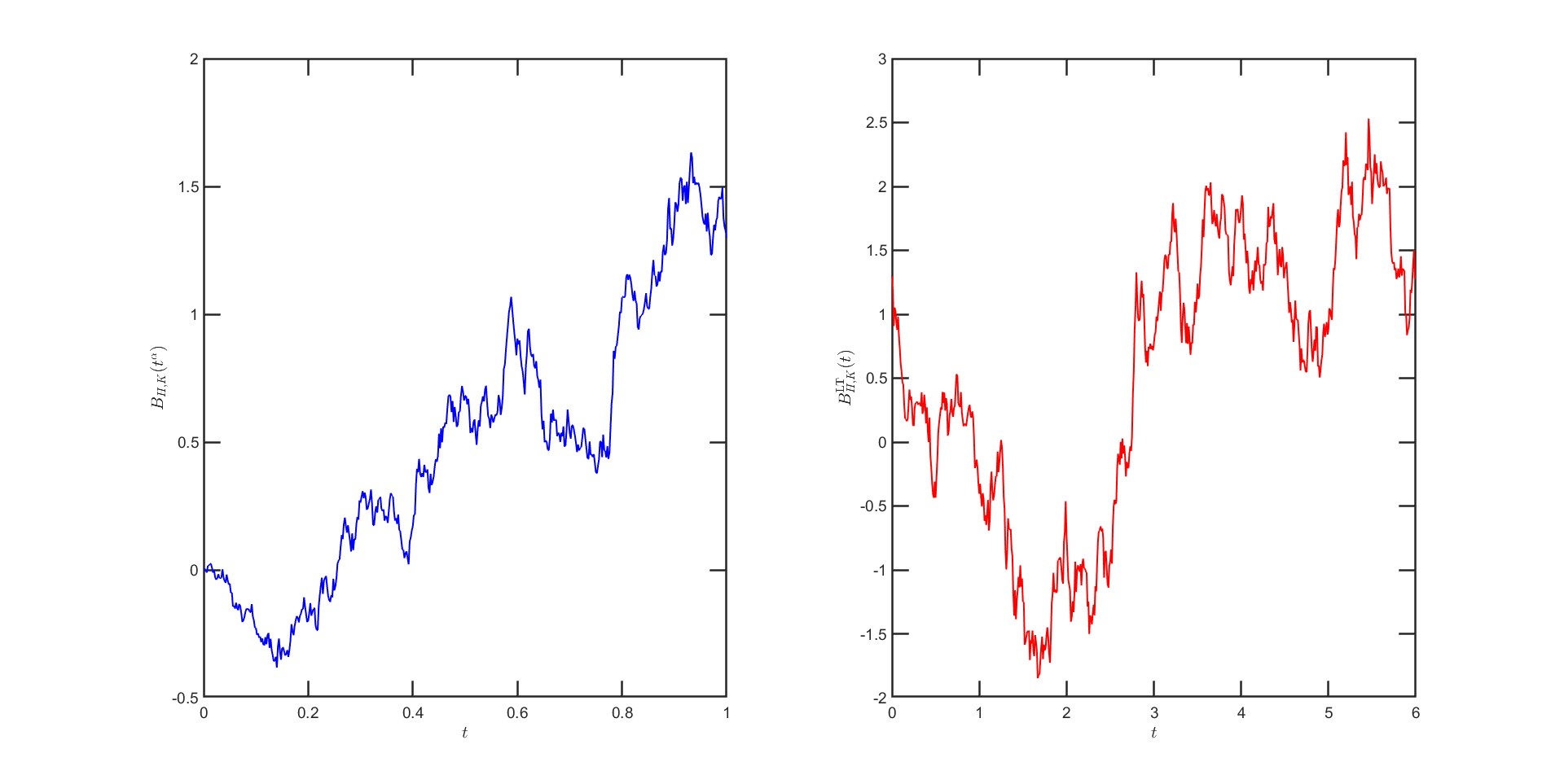}
  \caption{Left: trajectory of the scaled $bi-fBm$  process
  $B_{H,K}(t^\alpha)$ with $H=0.7$, $K=0.6$, $\alpha=3/2$. Right: trajectory of the Lamperti transform
  $B_{H,K}^{\mathrm{LT}}(t)=e^{-\alpha HK t}B_{H,K}(e^{\alpha t})$.}
  \label{fig:traj_bifBm}
\end{figure}

We now turn to ergodicity. For any stationary process $X^{\mathrm{LT}}$,
Birkhoff's theorem ensures that, almost surely,
\[
\frac{1}{T}\int_{0}^{T}(X^{\mathrm{LT}}(s))^{2}\,ds
\;\xrightarrow[T\to\infty]{}\;
\mathbb{E}\big[(X^{\mathrm{LT}}(0))^{2}\big].
\]
For the present models these limits are
\[
\mathbb{E}\big[(S_{H}^{\mathrm{LT}}(0))^{2}\big]=2-2^{2H-1},
\qquad
\mathbb{E}\big[(B_{H,K}^{\mathrm{LT}}(0))^{2}\big]=1.
\]
The left panel of Figure~\ref{fig:subfBm_TA_var_CF} shows the empirical convergence of the second moment of $S_{H}^{\mathrm{LT}}$, stabilising around $2-2^{2H-1}$ as expected. The rapid convergence reflects the exponential covariance decay established in Lemma~\ref{lem:asymp_subfbm_scaled} and Theorem~\ref{thm:ergodic_scaled_subfbm}.

To assess Gaussianity, we compare empirical characteristic functions with their theoretical forms. For the sub-fractional Lamperti process we have
\[
\mathbb{E}\big[e^{ikS_{H}^{\mathrm{LT}}(t)}\big]
=\exp\!\left(-\tfrac{1}{2}k^{2}\big(2-2^{2H-1}\big)\right),
\]
while for the bi-fractional Lamperti process,
\[
\mathbb{E}[e^{ikB_{H,K}^{\mathrm{LT}}(t)}]
=\exp\!\left(-\tfrac{1}{2}k^{2}\right).
\]
The right panel of Figure~\ref{fig:subfBm_TA_var_CF} shows the empirical characteristic function of $S_H^{\mathrm{LT}}$, which closely follows the Gaussian prediction.

\begin{figure}[H]
  \centering
  \includegraphics[width=\textwidth]{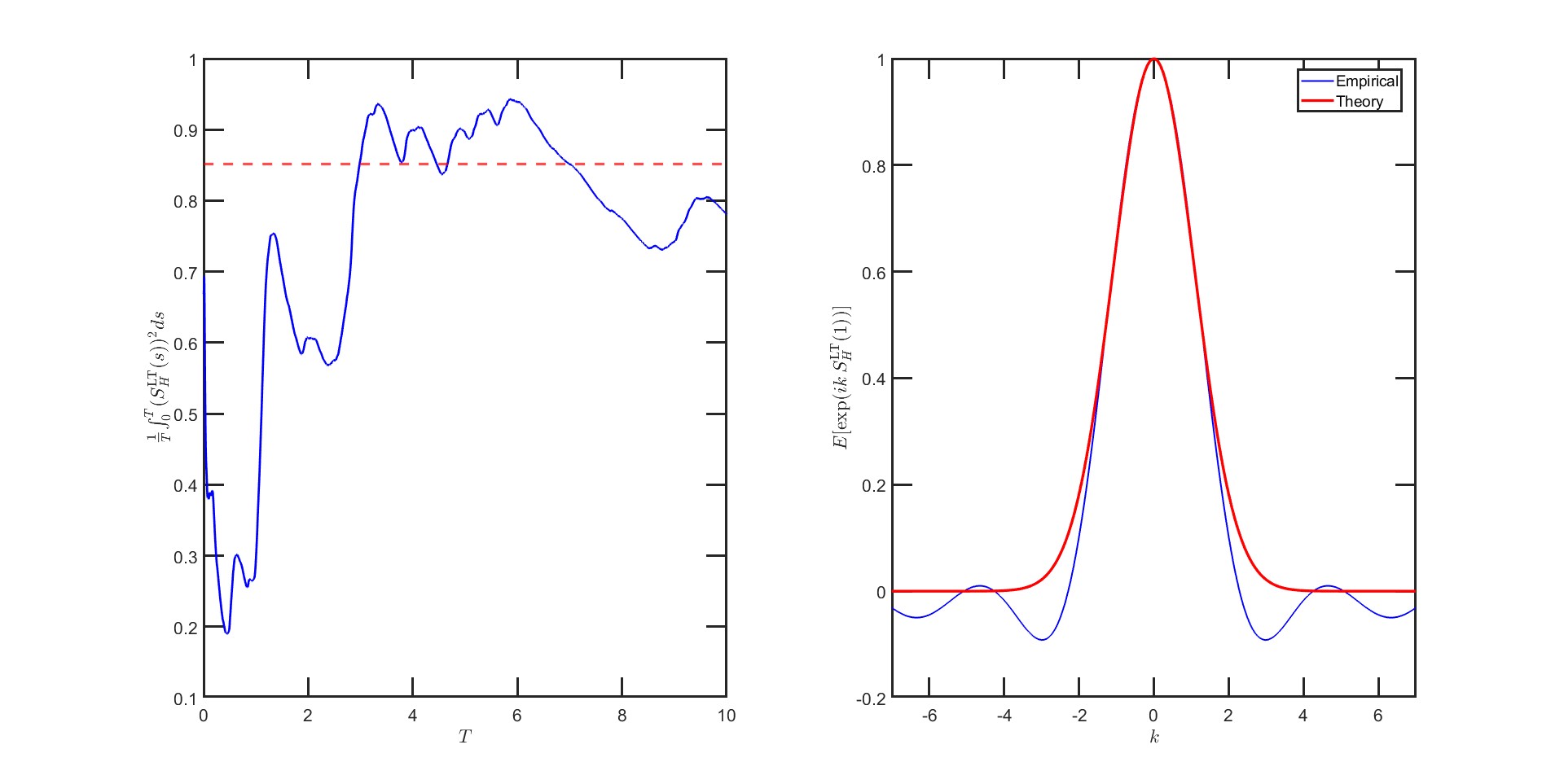}
  \caption{Lamperti $s-fBm$  with $H=0.6$, $\alpha=3$.
  Left: empirical long time second moment approaching its theoretical limit
  $2 - 2^{2H-1}$. Right: empirical characteristic function compared with the Gaussian characteristic function.}
  \label{fig:subfBm_TA_var_CF}
\end{figure}

Finally, Figure~\ref{fig:bifBm_CF} shows the corresponding characteristic function for the bi fractional case, matching the standard Gaussian form. This confirms that the Lamperti transforms are Gaussian with variances consistent with the analytical results.

\begin{figure}[H]
  \centering
  \includegraphics[width=\textwidth]{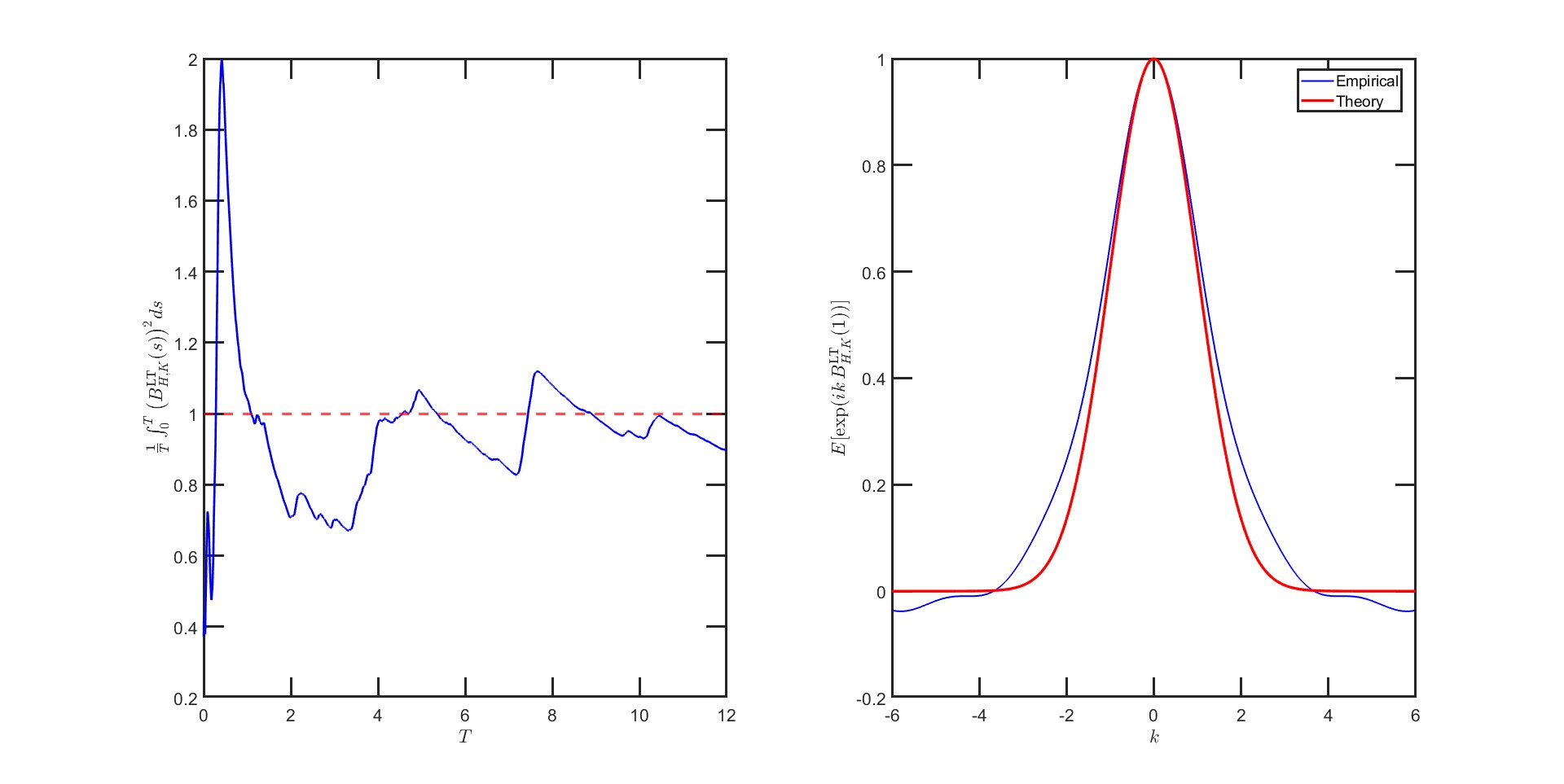}
  \caption{Lamperti $bi-fBm$  with $H=0.8$, $K=0.6$, $\alpha=1.5$.
  Empirical characteristic function compared with the
  standard Gaussian characteristic function.}
  \label{fig:bifBm_CF}
\end{figure}

\section{Conclusion}
\label{sec:conclusion}

We have shown that scaled Lamperti transforms convert $s$-fBm, $bi$-fBm, and
their Langevin equation integral variants into stationary Gaussian processes with
exponentially decaying correlations, yielding full ergodicity and strong mixing
in contrast to the slower decay produced by the classical Lamperti transform.
The explicit covariance representations allow mixing rates to be quantified in
terms of $(H, K, \alpha)$ and justify single trajectory reconstruction of
ensemble properties for these non-stationary models. More broadly, the results
clarify the role of the scaling parameter $\alpha$ in enforcing exponential
decorrelation and reveal how the interaction between $H$ and $K$ in the
bi-fractional case leads to distinct exponential regimes, underscoring the
usefulness of scaled Lamperti methods for analysing self-similar processes with
non-stationary increments.

\section*{Statements and Declarations}

\subsection*{Funding}
No funding was received to assist with the preparation of this manuscript.

\subsection*{Competing Interests}
The authors have no competing interests to declare that are relevant to the content of this article.

\subsection*{Data Availability}
No datasets were generated or analysed during the current study. Simulation code is available from the corresponding author upon reasonable request.

\end{document}